  \newcommand{\calA}{\mathcal{A}}
  \newcommand{\calC}{\mathcal{C}}
  \newcommand{\calD}{\mathcal{D}}
  \newcommand{\calE}{\mathcal{E}}
  \newcommand{\calT}{\mathcal{T}}
  \newcommand{\calV}{\mathcal{V}}
  \newcommand{\CC}{\mathbb{C}}
  \newcommand{\RR}{\mathbb{R}}
  \newcommand{\ZZ}{\mathbb{Z}}
\newcommand{\Dlab}{\calD}
\newcommand{\xdkl}{X_\f}
\newcommand{\cdkl}{\mathfrak C_{\f}}
\newcommand{\pdkl}{\psi_\f}
  \newtheorem{theorem}{Theorem}[section]
  \newtheorem{proposition}[theorem]{Proposition}
  \newtheorem{lemma}[theorem]{Lemma}
  \newtheorem*{theorem*}{Theorem}
  \newtheorem{introthm}{Theorem}
  \theoremstyle{definition}
  \newtheorem{definition}[theorem]{Definition}
  \newtheorem*{claim*}{Claim}
  \newtheorem{example}[theorem]{Example}
  \newtheorem*{question*}{Question}
  \newtheorem*{answer*}{Answer}
  \newtheorem*{application*}{Application}
  \theoremstyle{remark}
  \newtheorem{remark}[theorem]{Remark}
  \newtheorem*{remark*}{Remark}
  \newtheorem{introrem}[introthm]{Remark}
  \newcommand{\propref}[1]{Proposition~\ref{Prop:#1}}
  \DeclareMathOperator{\Mod}{Mod}
  \DeclareMathOperator{\Hom}{Hom}
  \newcommand{\Aut}{\ensuremath{\operatorname{Aut}}\xspace} 
  \newcommand{\Out}{\ensuremath{\operatorname{Out}}\xspace} 
  \newcommand{\sV}{{\sf V}}
  \newcommand{\sX}{{\sf X}}
  \newcommand{\sY}{{\sf Y}}
  \newcommand{\f}{{\sf f}}
  \renewcommand{\t}{{\sf t}}
  \newcommand{\w}{{\sf w}}
  \renewcommand{\a}{{\sf a}}
  \newcommand{\A}{\mathcal{A}}
 \newcommand{\C}{\mathcal{C}}
 \newcommand{\D}{\mathcal{D}}
  \newcommand{\param}{{\mathchoice{\mkern1mu\mbox{\raise2.2pt\hbox{$
  \centerdot$}}
  \mkern1mu}{\mkern1mu\mbox{\raise2.2pt\hbox{$\centerdot$}}\mkern1mu}{
  \mkern1.5mu\centerdot\mkern1.5mu}{\mkern1.5mu\centerdot\mkern1.5mu}}}
  \renewcommand{\setminus}{{\smallsetminus}}
  \newcommand{\ST}{\mathbin{\Big|}} 
  \newcommand{\from}{\colon\thinspace} 
   \newcommand{\al}{\alpha}
  \newcommand{\lam}{\lambda}
  \newcommand{\eps}{\varepsilon}
  \newcommand{\sig}{\sigma}
  \newcommand{\into}{\hookrightarrow}
  \def\sign{\textup{sign}}
\def\Supp{\mbox{Supp}}
     \def\Hom{\textup{Hom}}
\def\R{\mathbb{R}}
 \def\Z{\mathbb Z}
\begin{document}

  \title    {Digraphs and cycle polynomials for free-by-cyclic groups.}
  \author   {Yael Algom-Kfir, Eriko Hironaka
 and Kasra Rafi}
\date{\today}

\thanks{\tiny \noindent 
The second author was partially supported by a collaboration grant from the Simons Foundation \#209171,
the third author was partially supported by an NSERC Discovery grant, RGPIN \#435885}

\begin{abstract}
Let $\phi \in \Out(F_n)$ be a free group outer automorphism that can be represented by an
expanding, irreducible train-track map.  The automorphism  $\phi$ determines 
a free-by-cyclic group $\Gamma=F_n \rtimes_\phi \Z,$ and a homomorphism 
$\alpha \in H^1(\Gamma; \Z)$. By work of Neumann, Bieri-Neumann-Strebel
and Dowdall-Kapovich-Leininger, $\alpha$ has an open cone neighborhood $\A$
in $H^1(\Gamma;\R)$ whose  integral points correspond to
other  fibrations of $\Gamma$ whose associated outer automorphisms 
are themselves representable by 
expanding irreducible train-track maps. In this paper, we define an 
analog of McMullen's Teichm\"uller polynomial that computes the dilatations
of all outer automorphism in $\A$.
\end{abstract}
  \maketitle

\section{Introduction}
There is continually growing evidence of a powerful analogy between 
the mapping class group $\Mod(S)$ of a closed oriented surfaces $S$ of finite type, and 
the group of outer automorphisms  $\Out(F_n)$ of free groups $F_n$. 
A recent advance in this direction can be found in work of  Dowdall-Kapovich-Leininger 
\cite{DKL} who developed  an analog of the fibered face theory of surface 
homeomorphisms due to Thurston \cite{Thurston_Norm} and Fried 
\cite{Fried_dilatation_extends}. In this paper we develop the analogy further by 
defining a version of McMullen's Teichm\"uller
polynomial for surface automorphisms defined in \cite{McMullen_polynomial_invariants} in the setting of outer automorphisms. 

\subsection*{Fibered face theory for free-by-cyclic groups.} 
 A free-by-cyclic group
$$
\Gamma = F_n \rtimes_\phi \Z,
$$
is a semi-direct product defined by an element $\phi \in \Out(F_n)$.  
If $x_1,\dots,x_n$ are generators of $F_n$, and $\phi_\circ \in \Aut(F_n)$ is 
a representative automorphism in the class $\phi$, then $\Gamma$ has a finite 
presentation
\begin{eqnarray*}\label{presentation-eqn}
\langle x_1,\dots,x_n, s \mid s x_i s^{-1} = \phi_\circ(x_i )\quad i=1,\dots,n \rangle.
\end{eqnarray*}
There is a distinguished homomorphism $\alpha_\phi\from \Gamma \rightarrow \Z$ 
induced by projection to the second coordinate. That is, 
$\alpha_\phi$ is an element of $H^1(\Gamma; \Z)$ and $F_n$ is the kernel
of $\alpha_\phi$. 

The deformation theory of free-by-cyclic groups started with the work of 
Neumann \cite{Neu} and  Bieri-Neumann-Strebel \cite{BNS}, 
where they showed there is an open cone in $H^1(\Gamma; \R)$ containing
all primitive integral element in $H^1(\Gamma;\R)$ that have
finitely generated kernels. In \cite{DKL} Dowdall-Kapovich-Leininger showed
that this deformation can be understood geometrically in a possibly smaller cone. 

More precisely, assume $\phi \in \Out(F_n)$ is representable by an expanding 
irreducible train-track map (see  \cite{Kapovich_Algorithmic_Detection} \cite{DKL}, 
and Section~\ref{freegroupautomorphisms-sec} for definitions). The outer 
automorphism $\phi \in \Out(F_n)$ may admit many train-track representatives 
$f$ and every train-track representative can be decomposed into a sequence of folds 
$\f$ \cite{Stallings_folds}  which is also 
non-unique. Dowdall, Kapovich and Leininger showed 
(see \cite{DKL}, Theorems A):

\begin{theorem}[Dowdall-Kapovich-Leininger] \label{DKL-thm} 
For $\phi \in \Out(F_n)$ that is representable by an expanding irreducible train-track 
map and an associated folding sequence $\f$, there is an open cone neighborhood 
$\calA_\f$ of $\alpha_\phi$ in $\Hom(\Gamma;\R)$,
such that, all primitive integral elements $\alpha \in \calA$, are associated to a 
free-by-cyclic decomposition
$$
\Gamma= F_{n_\alpha} \rtimes_{\phi_\alpha} \Z
$$
where $\alpha = \alpha_{\phi_\alpha}$ and $\phi_\alpha \in \Out(F_{n_\alpha})$ is 
also representable by an expanding irreducible train-track map. 
\end{theorem}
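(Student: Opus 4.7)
\smallskip
\noindent\textbf{Proof proposal.}
The plan is to build a geometric model for $\Gamma$ that directly encodes $\f$, and then extract the data $(n_\alpha,\phi_\alpha)$ as combinatorial cross-section data of a semi-flow on this model, in the spirit of Fried's analysis of fibered faces for hyperbolic 3-manifolds and Thurston's fibration theorem.

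First, I would assemble the folding sequence $\f=(G_{i-1}\stackrel{f_i}{\to}G_i)$ into a two-dimensional complex $X_\f$: stack the mapping cylinders of the individual folds end to end to obtain an infinite telescope, quotient by the periodicity of $\f$ coming from a train-track representative $f$ of $\phi$, and equip the resulting complex with the vertical semi-flow $\psi_\f$ induced by the mapping cylinder parameter. A direct calculation with generators and relations should give $\pi_1(X_\f)\cong\Gamma$; each graph $G_i$ embeds as a transverse cross-section; the first-return of $\psi_\f$ to $G_0$ recovers $f$; and the cohomology class dual to $\psi_\f$ is exactly $\alpha_\phi$.

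Next, I would define $\calA_\f\subset H^1(\Gamma;\R)$ to consist of those classes admitting a closed 1-form representative that is strictly positive along every orbit of $\psi_\f$. Strict positivity is an open condition, and in the finite-dimensional cell-by-cell model of closed 1-forms on $X_\f$ it is cut out by finitely many linear inequalities (one per combinatorial flow segment between 1-cells), so $\calA_\f$ is an open cone containing $\alpha_\phi$. Given a primitive integral $\alpha\in\calA_\f$, integrating such a representative yields a map $\eta_\alpha\colon X_\f\to S^1$; a regular fiber $\Theta_\alpha=\eta_\alpha^{-1}(\mathrm{pt})$ is a finite graph transverse to the flow. Positivity of the form on $\psi_\f$ forces every trajectory to hit $\Theta_\alpha$ in bounded time, so $\Theta_\alpha$ is connected, $\pi_1(\Theta_\alpha)\cong F_{n_\alpha}=\ker\alpha$, and the first-return map $f_\alpha\colon\Theta_\alpha\to\Theta_\alpha$ is well-defined and realizes the splitting $\Gamma=F_{n_\alpha}\rtimes_{\phi_\alpha}\Z$.

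The final and hardest step is verifying that $f_\alpha$ is an expanding, irreducible train-track representative of $\phi_\alpha$. Using the flow-time return function as an edge-length metric on $\Theta_\alpha$, expansion should be inherited from the uniform exponential stretching of $\psi_\f$ that is already visible in $f$. Irreducibility should follow from the fact that the forward $\psi_\f$-orbit of any edge of $\Theta_\alpha$ eventually sweeps out all of $X_\f$, which in turn follows from irreducibility of $f$. The main obstacle I anticipate is the train-track (no-cancellation) property: one must show that no consecutive pair of edges of an edge-path in $\Theta_\alpha$ is ever folded together by any iterate of $f_\alpha$. The plan is to analyze the finite list of folds that occur between two consecutive hits of a flow line on $\Theta_\alpha$ and argue that, because each $f_i$ in $\f$ is a single fold compatible with $f$, such backtracking on $\Theta_\alpha$ would force backtracking on $G_0$ under an iterate of $f$, contradicting the train-track hypothesis on $f$.
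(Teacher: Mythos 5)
This statement is quoted from Dowdall--Kapovich--Leininger and the paper gives no independent argument, only the citation together with the summary in Section~\ref{foldedmappingtorus-sec}; your plan is exactly that route: the folded mapping torus $X_\f$ with its semiflow, the cone of classes admitting positive cocycle representatives (Definition~\ref{Def:DKL-cone}), and transverse fibers $\tau_\al$ with expanding irreducible first-return maps (Theorem~\ref{DKL1-thm}). So your proposal is correct in outline and takes essentially the same approach as the cited proof the paper relies on.
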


We call $\calA_\f$ a {\it DKL-cone} associated to $\phi$.
 
\subsection*{Main Result}
Our main theorem is an analog of results in \cite{McMullen_polynomial_invariants}
in the setting of the outer automorphism groups (see below for more on the motivation
behind the result). 
For a given $\phi$, there are many DKL cones associated to $\phi$ since $A_\f$ 
depends on the choice of the train-track representative $f$ and folding sequence $\f$.
We show that there is a more unified picture. Namely, there is a cone $\calT_\phi$
depending only on $\phi$ that contains every cone $\calA_\f$. The cone $\calT_\phi$ 
is the support of a convex, real analytic, homogenous function $L$ of degree $-1$ 
whose restriction to every cone $\calA_\f$ is the logarithm of dilatation function. 
Moreover, this function can be computed via specialization of a single polynomial
$\Theta$ that also depends only on $\phi$. 

Our approach is combinatorial. We associate a labeled digraph to the splitting 
sequence $\f$.  This gives a combinatorial description of $\f$ and in turn defines a 
cycle polynomial $\theta$ and the cone $\calT_\phi$.   We analyze the effect of certain 
elementary moves on digraphs and show that their associated cycle polynomial 
and cone remain unchanged under these elementary moves. We show that as 
we pass to different fibrations of $\Gamma$ corresponding to other 
integral points of $\calA_\f$, the digraph changes by elementary moves, as do the
digraphs associated to different splitting sequences $f$.  This establishes the 
independence of $\theta$ and $\calT_\phi$ from the choice of splitting sequence $\f$.
The polynomial $\Theta$ is a factor of the cycle polynomial $\theta$ determined by
the log dilatation factor and does not depend on the choice of train track map.
  

We establish some terminology before stating the main theorem more
precisely. For $\phi \in \Out(F_n)$ that is representable by an expanding irreducible
train-track map and a nontrivial $\gamma \in F_n$, the growth rate of cyclically 
reduced word-lengths of $\phi^k(\gamma)$ is exponential, with a base 
$\lambda(\phi)>1$ that is independent of $\gamma$. The constant $\lambda(\phi)$ 
is called the {\it dilatation} (or {\it expansion factor}) of $\phi$.

Let $G$ be a finitely generated free abelian group of rank $k$, and let
$$
\theta = \sum_{g \in G} a_g g, \qquad a_g \in \Z
$$
be an element of the group ring $\Z G$.  For $\alpha \in \Hom(G;\Z)$, the 
\emph{specialization} of $\theta$ at $\alpha$ is the single variable integer polynomial
$$
\theta^{(\alpha)} (x) = \sum_{g \in G} a_g x^{\alpha(g)} \in \Z[x].
$$
The {\it house} of a $\theta^{(\alpha)} (x)$ is given by
$$
\left| \theta^{(\alpha)} \right| = 
\max \Big\{|\mu|\ \ST \mu \in \CC, \ \theta^{(\alpha)}(\mu) = 0 \Big\}.
$$

\begin{introthm}\label{main-thm} Let $\phi \in \Out(F_n)$ be an 
outer automorphism that is representable by an expanding irreducible train-track
map, $\Gamma =  F_n \rtimes_\phi \ZZ$ and let 
$G = \Gamma^{\mbox{ab}}/{\mbox{Torsion}}$. Then there exists an element 
$\Theta \in \Z G$ (well-defined up to an automorphism of $\Z G$)
with the following properties.
\begin{enumerate}
\item There is an open cone $\calT_\phi \subset  \Hom(G;\R)$ dual to a vertex of
the Newton polygon of $\Theta$ so that, for any expanding irreducible train-track representative  
$f  \from \tau \rightarrow \tau$ and any folding decomposition $\f$ of $f$, 
we have
\[
\calA_\f \subset \calT_\phi. 
\]
\item  For any integral $\alpha \in \calA_\f$, we have
$$
\big|\Theta^{(\alpha)}\big| = \lambda(\phi_\alpha).
$$
\item The function 
$$
L(\alpha) = \log \big|\Theta^{(\alpha)}\big|,
$$
which is defined on the primitive integral points of $\calA_\f$, extends to a real 
analytic, convex function on $\calT_\phi$ that is homogeneous of degree $-1$ and goes 
to infinity toward the boundary of any affine planar section of $\calT_\phi$.
\item The element $\Theta$ is minimal with respect to property (2), that is,  
if $\theta \in \Z G$ satisfies
$$
\big|\theta^{(\al)} \big| = \lambda(\phi_\al)
$$
on the integral elements of some open sub-cone of $\calT_\phi$ 
then $\Theta$ divides $\theta$.
\end{enumerate}
\end{introthm}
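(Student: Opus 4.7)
The plan is to build a combinatorial model of the folding sequence as a finite labeled digraph, read off a cycle polynomial $\theta \in \ZZ G$ from it, and then verify that an appropriate factor $\Theta$ of $\theta$ simultaneously captures the Perron--Frobenius data of every fibration in $\calA_\f$. First I would associate to $\f$ a finite digraph $\calG_\f$ whose vertices correspond to the edges of $\tau$ and whose directed edges encode the effect of the individual folds, with each directed edge carrying a label in $G$ recording how the corresponding $2$-cell of $X_\f$ lifts to the universal abelian cover of $\Gamma$. The cycle polynomial $\theta$ is then defined as a signed sum over simple cycles (equivalently, as the appropriate $\det(I - M)$ for the $G$-labeled transition matrix $M$ of $\calG_\f$), and its specialization $\theta^{(\alpha)}(x)$ is obtained by replacing each label $g$ with $x^{\alpha(g)}$.

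Next, using Theorem~\ref{DKL-thm}, for each primitive integral $\alpha \in \calA_\f$ there is a train-track representative $f_\alpha$ of $\phi_\alpha$ arising as a cross-section of the natural semi-flow on $X_\f$ dual to $\alpha$. I would identify $\theta^{(\alpha)}(x)$, up to units in $\ZZ[x,x^{-1}]$, with the reverse characteristic polynomial of the transition matrix of $f_\alpha$, by checking that cycles in $\calG_\f$ weighted by $\alpha$ enumerate exactly the orbit-cycles of that matrix with the correct return times. Perron--Frobenius then identifies $\lambda(\phi_\alpha)$ with the house, giving property (2) once $\Theta$ is extracted as the factor of $\theta$ responsible for this top eigenvalue.

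For parts (1) and (3), I would take $\calT_\phi$ to be the interior of the dual cone of the vertex $v$ of the Newton polygon of $\Theta$ corresponding to the leading monomial. On this cone, $\Theta^{(\alpha)}$ has a uniquely dominant monomial, so its largest root varies real-analytically in $\alpha$ by the implicit function theorem applied to the Perron--Frobenius eigenvalue. Log-convexity of the spectral radius of a non-negative matrix whose entries are log-convex functions (a Kingman/McMullen-style argument) yields convexity of $L$; homogeneity of degree $-1$ follows because rescaling $\alpha \mapsto t\alpha$ replaces $x$ by $x^t$ and therefore sends the spectral radius to its $1/t$-th power. Blow-up at the boundary of an affine section is a standard consequence of one monomial ceasing to dominate. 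The inclusion $\calA_\f \subset \calT_\phi$ follows since property (2) forces each $\alpha \in \calA_\f$ to lie in the domain where the dominant-monomial argument applies.

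The genuinely canonical content of the theorem is the independence of $\Theta$ and $\calT_\phi$ from $\f$ and from the train-track representative. The strategy, as sketched in the introduction, is to define a small list of elementary moves on labeled digraphs and show that (a) reordering or subdividing folds in a given folding decomposition $\f$, and (b) passing between distinct train-track representatives of the same $\phi$, both factor through these moves; then a direct computation shows each move preserves $\theta$ up to an invertible monomial in $\ZZ G$. The minimality clause (4) then follows by a density and resultant argument: on a dense set of integral points of $\calT_\phi$ the polynomial $\theta$ must share the Perron--Frobenius root with $\Theta^{(\alpha)}$, and an algebraic extraction of the minimal Laurent factor realizing this gives divisibility in $\ZZ G$. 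The main obstacle I anticipate is step (b): showing that any two expanding irreducible train-track representatives of the same $\phi$, together with any two folding decompositions, are connected by a chain of elementary moves on their digraphs. Control over this typically requires passing through a common refinement and tracking labels carefully through each intermediate fold, and this is where the bulk of the technical work will live.
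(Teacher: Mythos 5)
Your overall architecture matches the paper's: a $G$-labeled digraph built from the folding data, its cycle polynomial $\theta$, specialization at $\alpha$, identification with the characteristic polynomial of the DKL cross-section monodromy $f_\alpha$, a McMullen-type convexity statement, and extraction of a minimal factor $\Theta$. However, two steps in your plan are genuine gaps. First, your argument for $\calA_\f \subset \calT_\phi$ does not work: knowing property (2) at integral points of $\calA_\f$ (house of $\Theta^{(\alpha)}$ equals $\lambda(\phi_\alpha)$) does not force $\alpha$ into the dual cone of a chosen vertex of the Newton polygon --- equality of the house with the dilatation says nothing about which monomial of $\Theta$ dominates under $\alpha$, so nothing is ``forced'' about the dominant-monomial regime. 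The paper proves the inclusion directly: by the DKL theorem, the section map $\eta_\alpha$ restricts to a local diffeomorphism on flow lines and satisfies $\ell(\eta_\alpha(c))=\alpha([c])$ for every flow-positive cycle $c$; since the dual digraph embeds with all edges positively transverse to the flow, $\alpha$ is strictly positive on the homology class of every directed cycle, and $\Supp(\theta)\setminus\{1\}$ consists of inverses of such classes, which is precisely the condition $\alpha \in \calT_\theta(1)$. You need an argument of this kind; it is independent of, and logically prior to, property (2).

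Second, you locate ``the bulk of the technical work'' in connecting any two expanding irreducible train-track representatives of $\phi$, and any two folding decompositions, by a chain of elementary digraph moves --- and you give no plan for doing so. As stated this is a gap, and it is also an unnecessary detour. In the paper the elementary moves (allowable vertical subdivision, transversal subdivision, and folding of the branched surface, each shown to preserve the cycle function) are used only within a \emph{fixed} choice of $f$ and $\f$: to show that the mapping torus $(Y_f,\mathfrak C_f,\psi_f)$ and the folded mapping torus $(X_\f,\mathfrak C_\f,\psi_\f)$ have the same cycle function, and to re-cellulate $X_\f$ along the cross-section $\tau_\alpha$ so that its dual digraph becomes the digraph of $f_\alpha$ (which is what actually proves property (2), rather than a direct enumeration of orbit-cycles). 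Independence across different representatives $f'$ and foldings $\f'$ is then obtained formally from the minimality clause: both distinguished factors $\Theta_\f$ and $\Theta_{\f'}$ compute $\lambda(\phi_\alpha)$ on integral points of the intersection of their cones, which is a nonempty open cone since both contain $\alpha_\phi$, so each divides the other and they agree up to units. Rerouting your proof through this divisibility argument eliminates the hardest unproven step of your proposal; your sketch of (4) via a principal-ideal/minimal-factor extraction is in the right spirit and is exactly the mechanism that makes this shortcut available.
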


\begin{introrem}\label{DKLpoly-rem}
In their original paper \cite{DKL} Dowdall, Kapovich, and Leininger also show that
$\log (\lambda(\phi_\alpha))$ is convex and has degree $-1$ and in the subsequent 
paper \cite{DKL_poly}, using a different approach from ours, they give an independent 
definition of an element $\Theta_{\textup{\tiny DKL}} \in \Z G$ with the property that 
$\lambda(\phi_\alpha) = \big|\Theta_{\mbox{\tiny DKL}}^{(\alpha)}\big|$ for
 $\alpha \in A_\f$. Property (3) of Theorem~\ref{main-thm} implies that $\Theta$ 
divides $\Theta_{\mbox{\tiny DKL}}$.
\end{introrem}

\begin{introrem}
Thinking of $G$ as an abelian group generated by $t_1,\dots,t_k$, we
can identify the elements of $G$ with monomials in the symbols $t_1,\dots,t_k$, and hence $\Z G$ with Laurent
polynomials in $\Z(t_1,\dots,t_k)$.   Thus, we can associate to $\theta \in \Z G$ 
a polynomial $\theta(t_1,\dots,t_k) \in \Z(t_1,\dots,t_k)$.
Identifying $\Hom(G;\Z)$ with $\Z^k$, each element $\alpha = (a_1,\dots,a_k)$
defines a specialization of $\theta = \theta(t_1,\dots,t_k)$ by
$$
\theta^{(\alpha)}(x) = \theta(x^{a_1},\dots,x^{a_k}).
$$
For ease of notation, we mainly use the group ring notation through most of this paper.
\end{introrem}

\subsection*{Motivation from pseudo-Anosov mapping classes on surfaces.}
Let $S$ be a closed oriented surface of negative finite Euler characteristic.  
A {\it mapping class} $\phi = [\phi_\circ]$ is an isotopy class of
homeomorphisms
$$
\phi_\circ\from S \rightarrow S.
$$    
The mapping torus $X_{(S,\phi)}$
of the pair $(S,\phi)$ is the quotient space
$$
X_{(S,\phi)} = S \times [0,1]/{(x,1) \sim (\phi_\circ(x),0)}.
$$
Its homeomorphism type is independent of the choice of representative $\phi_\circ$ 
for $\phi$. The mapping torus $X_{(S,\phi)}$ has a distinguished fibration 
$\rho_\phi\from X_{(S,\phi)} \rightarrow S^1$
defined by projecting $S \times [0,1]$ to its second component and identifying endpoints.
Conversely, any fibration $\rho \from X \rightarrow S^1$ of a 3-manifold $X$ over a 
circle can be written as the mapping torus of a unique mapping class $(S,\phi)$, with 
$\rho = \rho_\phi$.  The mapping class $(S,\phi)$ is called the {\it monodromy} of $\rho$.

Thurston's fibered face theory \cite{Thurston_Norm} gives a parameterization of 
the fibrations of  a $3$--manifold $X$ over the circle with connected fibers by
the primitive integer points on a finite union of disjoint convex cones in $H^1(X;\R)$, 
called {\it fibered cones}.  The {\it dilatation} of $\phi$ and is denoted by $\lambda(\phi)$.
Thurston showed that the mapping torus of any pseudo-Anosov mapping class is 
hyperbolic, and the monodromy of any fibered hyperbolic 3-manifold is 
pseudo-Anosov.  It follows that  the set of all pseudo-Anosov mapping classes 
partitions into subsets corresponding to integral points on fibered cones of
hyperbolic 3-manifolds.

By results  of Fried \cite{Fried_dilatation_extends} (cf. \cite{Matsumoto87} 
\cite{McMullen_polynomial_invariants})  the function $\log \lambda (\phi)$ defined 
on integral points of a fibered cone $\calT$ extends to a continuous convex function 
$$
\mathcal{Y} \from \calT \rightarrow \R,
$$
that is a homogeneous of degree $-1$, and goes to infinity toward the
boundary of any affine planar section of $\calT$.  The {\it Teichm\"uller polynomial of 
a fibered cone $\calT$} defined in \cite{McMullen_polynomial_invariants} is an element  
$\Theta_{\mbox{\tiny{Teich}}}$
in the group ring $\Z G$ where $G = H_1(X;\Z)/{\mbox{Torsion}}$.
The group ring $\Z G$ can be thought of as a ring of Laurent polynomials in the generators
of $G$ considered as a multiplicative group, and in this way, we can think of 
$\Theta_{\mbox{\tiny{Teich}}}$ as a polynomial.
The Teichm\"uller polynomial $\Theta_{\mbox{\tiny{Teich}}}$ has
 the property that the dilatation $\lambda(\phi_\alpha)$ of every mapping class $\phi_\alpha$ 
 associated to an integral point $\alpha \in \calT$,
$\lambda(\phi_\al)$ can be obtained from $\Theta_{\text{\tiny{Teich}}}$ by taking the house of
the specialization.  Furthermore,  the cone $\calT$
and the function $\mathcal{Y}$ are determined by $\Theta_{\text{\tiny{Teich}}}$.
Our work is a step towards reproducing this picture in the setting
of $\Out(F_n)$. 

\subsection*{Organization of paper}

In Section~\ref{digraphs-sec}
we establish some preliminaries about Perron-Frobenius  digraphs  $D$ with edges labeled by a free abelian group $G$.
Each digraph $D$ determines a cycle complex $C_D$ and
cycle polynomial $\theta_D$ in the group ring $\Z G$.  Under certain extra conditions,
we define a cone $\calT$, which we call the McMullen cone, and show that
$$
L(\alpha) = \log |\theta_D^{(\alpha)}|
$$
defined for integral elements of $\calT$ extends to a homogeneous function of degree -1 that is
real analytic and convex on $\calT$ and goes to infinity toward the boundary of affine planar sections of $\calT$.   Furthermore, 
we show the existence of  a distinguished factor $\Theta_D$ of $\theta_D$ with the property that
$$
|\Theta_D^{(\alpha)}| = |\theta_D^{(\alpha)}|,
$$
and $\Theta_D$ is minimal with this property.
Our proof uses a key result of McMullen (see \cite{McMullen_polynomial_invariants}, Appendix A).

In Section~\ref{semiflow-sec} we define branched surfaces $(X,\mathfrak C,\psi)$, where $X$ is $2$-complex with a
semiflow $\psi$, and  cellular
structure $\mathfrak C$ satisfying compatibility conditions with respect to $\psi$.  To a branched surface we 
associate a dual digraph $D$ and a $G$-labeled cycle complex
$C_D$, where $G = H_1(X;\Z)/{\mbox{torsion}}$, and a cycle function $\theta_D \in \Z G$.   We show that
$\theta_D$ is invariant under certain allowable
cellular subdivisions and homotopic modifications of $(X,\mathfrak C, \psi)$.

In Sections~\ref{mappingtori_and_branchedsurfaces-sec} and \ref{foldedmappingtorus-sec} 
we study the branched surfaces associated to
the train-track map $f$ and folding sequence $\f$ defined in \cite{DKL}, called respectively the mapping torus, and folded mapping torus.
We use the invariance under allowable cellular subdivisions and modifications 
established in Section~\ref{digraphs-sec} and Section~\ref{semiflow-sec} to show that the cycle functions for these 
branched surfaces are equal.   The results of Section~\ref{digraphs-sec} applied to the mapping torus for $f$ imply the existence
of $\Theta_\phi$ and $\calT_\phi$ in Theorem~\ref{main-thm}.  Applying an argument in \cite{DKL}, we show that
 further subdivisions with the  folded mapping 
torus give rise to mapping tori for train-track maps corresponding to $\phi_\alpha$, and use this to show that
for $\alpha \in \calA_\f$,  we have $\lambda(\phi_\alpha) = |\Theta_\phi^{(\alpha)}|$.
We further compare the definition of the DKL cone $\calA_\f$ and $\calT_\phi$ to show
inclusion $\calA_\f \subset \calT_\phi$, and thus complete the proof of Theorem~\ref{main-thm}.

We conclude  in Section \ref{example-sec}  with  an example where $A_\f$ is a 
proper subcone of $\calT_\phi$.

\subsection*{Acknowledgements.} The authors would like to thank J. Birman,  
S. Dowdall, N. Dunfield,  A. Hadari , C. Leininger, C. McMullen, and K. Vogtman 
for helpful discussions.

\section{Digraphs, their cycle complexes and eigenvalues of $G$-matrices}
\label{digraphs-sec}

This section contains basic definitions and properties of digraphs, and a key result 
of McMullen that will be useful in our proof of Theorem~\ref{main-thm}.

\subsection{Digraphs, cycle complexes and their cycle polynomials}
\label{plain-digraph-sec}
We recall basic results concerning digraphs (see, for example,  \cite{Gantmacher59} 
and \cite{C-R:Graphs} for more details).
 
\begin{definition}\label{digraph-def} A {\it digraph} $D$ is a finite directed graph with at least two vertices.  Given an ordering
$v_1,\dots,v_m$ of the vertices of $D$, the  {\it adjacency matrix}
of $D$ is the matrix 
$$
M_D = [a_{i,j}],
$$
where $a_{i,j} = m$ if there are $m$ directed edges from $v_i$ to $v_j$.
The {\it characteristic polynomial} $P_D$ is the characteristic polynomial of $M_D$
and the {\it dilatation} $\lambda(D)$ of $D$ is the spectral radius of $M_D$
$$
\lambda(D) = \max \Big\{|e| \, \Big| \, \mbox{$e$ is an eigenvalue of $M_D$} \Big\}.
$$  
\end{definition}

Conversely, any square $m\times m$ matrix $M = [a_{i,j}]$ with non-negative integer entries determines 
a digraph $D$ with $M_D = M$.  The digraph $D$ has $m$ vertices and $a_{i,j}$ vertices from the $i$th to the $j$th vertex.

\begin{definition}\label{EXdigraph-def}
For a matrix $M$, let $a_{ij}^m$ be the $ij^{\, \text{th}}$ entry of $M^m$.
A non-negative  matrix $M$ with real entries is called  \emph{expanding} if 
\[
\limsup_{m \to \infty} a_{ij}^m = \infty.
\]
A digraph $D$ is \emph{expanding} if its directed adjacency matrix $M_D$ is 
expanding. Note that an expanding digraph is strongly connected. 
\end{definition}

An eigenvalue of $M$ is {\it simple} if its algebraic multiplicity is 1. 
Note that several simple eigenvalues may have the same norm. 
The following theorem is well-known (see, for example, \cite{Gantmacher59}).

\begin{theorem} \label{EX-thm} Let $M$ be a matrix and $\lambda(M)$ the maximum of the norm 
of any eigenvalue of $M$. 
If $M$ is expanding, then
it has a simple eigenvalue with norm equal to $\lam(M)$ and it has an associated   
eigenvector that is strictly positive. 
In addition, for every $i$ and $j$, we have
\[
\limsup_{m \to \infty} (a_{ij}^m)^{\frac 1m} = \lambda(M). 
\]
\end{theorem}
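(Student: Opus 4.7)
My plan is to deduce Theorem~\ref{EX-thm} from the classical Perron--Frobenius theorem for irreducible non-negative matrices. First I would note that if $M$ is expanding then for every ordered pair $(i,j)$ the entry $a_{ij}^m$ is positive for some $m$, so the directed adjacency graph of $M$ is strongly connected, which is exactly irreducibility of $M$ in the classical sense.

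Granted irreducibility, the Perron--Frobenius theorem for irreducible non-negative matrices immediately supplies a simple real positive eigenvalue $\mu$ equal to the spectral radius $\lambda(M)$, with a strictly positive right eigenvector. (Other eigenvalues of modulus $\lambda(M)$ may exist when $M$ is periodic, but they too are simple, so in particular there is a simple eigenvalue of maximal norm with positive eigenvector, which gives the first assertion.)

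For the limsup statement I would invoke Gelfand's formula, $\lim_m \|M^m\|^{1/m}=\lambda(M)$, valid for any matrix norm; choosing the entry-wise maximum norm and using equivalence of norms on a finite dimensional space yields $\limsup_m (\max_{i,j} a_{ij}^m)^{1/m}=\lambda(M)$. To transfer this from the maximizing entry to a fixed $(i,j)$, strong connectivity provides integers $\ell, k$ and indices $p,q$ with $a_{ip}^{\ell}>0$ and $a_{qj}^{k}>0$, and then
\[
a_{ij}^{m+\ell+k}\;\geq\; a_{ip}^{\ell}\,a_{pq}^m\,a_{qj}^{k}
\]
shows that the growth rate of the $(p,q)$ entry lower-bounds that of the $(i,j)$ entry. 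The reverse inequality is immediate from $a_{ij}^m\leq\max_{p,q}a_{pq}^m$.

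The one subtlety I expect is that the pair $(p,q)$ realizing the maximum may depend on $m$. Since there are only finitely many index pairs, one passes to a subsequence along which the maximizer is constant; once $p$ and $q$ are fixed the path-concatenation bound above applies uniformly and the argument goes through. Beyond this bookkeeping the result reduces cleanly to Perron--Frobenius together with Gelfand's formula.
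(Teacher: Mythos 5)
Your proposal is correct and is essentially the approach the paper intends: the paper gives no proof of this theorem, merely citing Gantmacher's classical Perron--Frobenius theory, and your reduction (expanding implies strong connectivity, hence irreducibility; then the classical irreducible Perron--Frobenius theorem yields the simple dominant eigenvalue with strictly positive eigenvector) matches that. Your treatment of the $\limsup$ statement via Gelfand's formula, the path-concatenation bound $a_{ij}^{m+\ell+k}\geq a_{ip}^{\ell}\,a_{pq}^{m}\,a_{qj}^{k}$, and passing to a subsequence on which the maximizing entry $(p,q)$ is constant correctly supplies the details, including why only a $\limsup$ (not a limit) can be asserted in the periodic case.
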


\begin{definition} 
A {\it simple cycle} $\al$ on a digraph $D$ is an isotopy class of embeddings
of the circle $S^1$ to $D$ oriented compatibly with the directed edges of $D$. 
A {\it cycle} is a disjoint union of simple cycles.
The  {\it cycle complex} $C_D$ of a digraph $D$ is the 
collection of cycles on $D$ thought of as a simplicial complex, whose vertices are the simple cycles.


The cycle complex $C_D$ has a measure which assigns to each cycle  its length in $D$,
that is, if $\gamma$ is a cycle on $C_D$, then its {\it length} $\ell(\gamma)$ is the number of
vertices (or equivalently the number of edges)
of $D$ on $\gamma$, and, if $\sigma = \{\gamma_1,\dots,\gamma_s\}$, then
$$
\ell(\sigma) = \sum_{i=1}^s \ell(\gamma_i).
$$
Let $|\sigma| = s$ be the {\it size} of $\sigma$.
The {\it cycle polynomial} of a digraph $D$ is given by
$$
\theta_D (x) = 1 + \sum_{\sig \in C_D} (-1)^{|\sigma|} x^{-\ell(\sigma)}.
$$
\end{definition}

\begin{theorem}[Coefficient Theorem for Digraphs \cite{C-R:Graphs}]\label{coeff-thm} 
Let $D$ be a digraph with $m$ vertices, and $P_D$ the characteristic polynomial of the 
directed adjacency
matrix $M_D$ for $D$. Then
\[
P_D(x) 
~ = ~ x^m \theta_{D}(x)
\]
\end{theorem}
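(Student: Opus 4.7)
The plan is to prove the identity by directly expanding the characteristic polynomial $P_D(x)=\det(xI_m - M_D)$ via the Leibniz permutation formula and then reorganizing the sum so that its terms are indexed by elements of the cycle complex $C_D$.

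First I would write
\[
P_D(x) \;=\; \sum_{\pi \in S_m} \operatorname{sgn}(\pi) \prod_{i=1}^m (xI_m - M_D)_{i,\pi(i)}.
\]
Since each diagonal entry $(xI_m - M_D)_{i,i} = x - a_{i,i}$ is a sum of two terms, I would fully expand the product so that, for each permutation $\pi$ and each subset $T \subseteq \mathrm{Fix}(\pi)$, one chooses either the factor $x$ (at the indices outside $S := T \cup \{i : \pi(i)\neq i\}$) or the factor $-a_{i,\pi(i)}$ (at the indices inside $S$). After this reorganization the sum is indexed by pairs $(S, \pi_S)$ where $\pi_S$ is a permutation of the set $S \subseteq \{1,\dots,m\}$, and the contribution of $(S,\pi_S)$ is
\[
\operatorname{sgn}(\pi_S)\cdot x^{m-|S|}\cdot(-1)^{|S|}\prod_{i \in S} a_{i,\pi_S(i)}.
\]

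Next I would do the sign bookkeeping. Writing $c(\pi_S)$ for the total number of cycles of $\pi_S$ (including fixed points), the standard identity $\operatorname{sgn}(\pi_S) = (-1)^{|S|-c(\pi_S)}$ combines with the extra $(-1)^{|S|}$ to give an overall sign $(-1)^{c(\pi_S)}$. The crucial combinatorial step is then to interpret the remaining data $(S,\pi_S)$ geometrically: decompose $\pi_S$ into its cycles; each cycle $(i_1\,i_2\cdots i_k)$ corresponds to a choice of a directed simple cycle on the vertex set $\{i_1,\dots,i_k\}$ in $D$, and the product $a_{i_1,i_2}a_{i_2,i_3}\cdots a_{i_k,i_1}$ counts precisely the number of such simple cycles (since $a_{ij}$ counts directed edges from $v_i$ to $v_j$, including the self-loop case $k=1$). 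Thus summing over all $\pi_S$ on a fixed $S$ and then over all $S$ is the same as summing over elements $\sigma \in C_D$ together with the empty cycle, with $|S|=\ell(\sigma)$ and $c(\pi_S)=|\sigma|$.

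Combining these, the expansion becomes
\[
P_D(x) \;=\; x^m \;+\; \sum_{\sigma \in C_D} (-1)^{|\sigma|}\, x^{m-\ell(\sigma)} \;=\; x^m\!\left(1 + \sum_{\sigma \in C_D}(-1)^{|\sigma|}\,x^{-\ell(\sigma)}\right) \;=\; x^m\, \theta_D(x),
\]
which is the desired identity. The only non-routine point is the sign accounting showing that the global sign reduces to $(-1)^{|\sigma|}$, and the clean bijection between ``partial permutations'' $(S,\pi_S)$ and cycle systems $\sigma$ in $D$; I expect the former to be the main place where care is needed, while the latter is essentially tautological once one notices that cycles of $\pi_S$ are exactly simple cycles on the induced subgraph.
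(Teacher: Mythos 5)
Your proposal is correct and follows essentially the same route as the paper's proof: both expand $\det(xI-M_D)$ by the permutation (Leibniz) formula, expand the diagonal factors $x-a_{i,i}$, and reindex the resulting terms by vertex-disjoint cycle systems $\sigma\in C_D$ weighted by edge multiplicities, with the sign bookkeeping reducing to $(-1)^{|\sigma|}$. Your indexing by partial permutations $(S,\pi_S)$ is just a repackaging of the paper's indexing by pairs $(\pi,\nu)$ with $\nu\subseteq\mathrm{fix}(\pi)$, so no further changes are needed.
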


\begin{proof} Let $\mbox{M}_D= [ a_{i,j} ]$  be the adjacency matrix for $D$.
Then
\begin{eqnarray*}
P_D (x) &=& \mbox{det}(xI - \mbox{M}_D).
\end{eqnarray*}
Let $S_{\sV}$ be the group of permutations of the vertices $\sV$ of
$D$.   For $\pi \in S_V$, let $\mbox{fix}(\pi) \subseteq {\sV}$ be the set of
vertices fixed by  $\pi$, and let $\sign(\pi)$ be  -1 if $\pi$ is an odd permutation  and $1$ if $\pi$ is even.
Then
\begin{eqnarray*}
P_D(x) &=& \sum_{\pi\in S_{\sV}} \sign(\pi) A_\pi
\end{eqnarray*}
where
\begin{eqnarray}\label{Ctau}
A_\pi&=& \prod_{v \notin \mbox{fix}(\pi)}(-a_{v,\pi(v)})
\prod_{v \in \mbox{fix}(\pi)} (x-a_{v,v})
\end{eqnarray}

There is a natural map $\Sigma: C_D \to S_{\sV}$ from the cycle complex $C_D$ to the permutation group
$S_{\sV}$  on the set $\sV$ defined as follows.
For each simple cycle $\gamma$ in $D$ passing through the vertices $\sV_\gamma \subset \sV$, there is a corresponding 
cyclic permutation $\Sigma(\gamma)$ of $\sV_\gamma$.  That is, if $\sV_\gamma = \{v_1,\dots,v_\ell\}$ contains more than one vertex
and is ordered according to their appearance in the cycle, then
then $\Sigma(\gamma) (v_i) = v_{i+1 (\mbox{\small mod} \ell)}$.   If $\sV_\gamma$ contains one vertex, we 
say $\gamma$ is a {\it self-edge}.  For self-edges $\gamma$,
$\Sigma(\gamma)$ is the identity permutation.
Let $\sigma = \{\gamma_1,\dots,\gamma_s\}$ be a cycle on $D$.  Then we define $\Sigma(\sigma)$
to be the product of disjoint cycles
$$
\Sigma(\sigma) = \Sigma(\gamma_1) \circ \cdots \circ \Sigma(\gamma_\ell).
$$

The polynomial $A_\pi$ in Equation~(\ref{Ctau}) can be rewritten in terms of the cycles $\sigma$ of $C_D$ with $\Sigma(\sig) = \pi$. 
First we rewrite $A_\pi$ as
\begin{eqnarray}\label{summand-eqn}
A_\pi = \sum_{\nu \subset \mbox{fix}(\pi)}x^{|\mbox{fix}(\pi) - \nu|} \prod_{v \notin \mbox{fix}(\pi)}(-a_{v,\pi(v)})\prod_{v \in \nu} (- a_{v,v} ).
\end{eqnarray}

Let  $\pi \in S_{\sV}$ be in the image of $\Sigma$.  For a cycle $\sigma \in C_D$, let  $\nu(\sigma) \subset \sV$
be the subset vertices at which $\sigma$ has a self-edge.

For $\nu \subset \mbox{fix}(\pi)$ let 
$$
P_{\pi,\nu} = \{ \sig \in C_D\ |\ \mbox{$\Sigma(\sig) = \pi$ and $\nu(\sig)= \nu$}\}.
$$
The we claim that the number of elements in $P_{\pi_\nu}$ is 
\begin{eqnarray}\label{Ppinu-eqn}
\prod_{v \notin \mbox{fix}(\pi)} a_{v,\pi(v)} \prod_{v \in \nu} a_{v,v}.
\end{eqnarray}
Let $\sigma \in C_D$ be such that $\Sigma(\sigma) = \pi$.  Then for each $v \in \sV \setminus \mbox{fix}(\pi)$,
there is a choice of $a_{v,\pi(v)}$ edges from $v$ to $\pi(v)$, and for each $v \in \mbox{fix}(\pi)$ 
$\sigma$ either contains no self-edge, or one of $a_{v,v}$ possible self-edges
at $v$.  This proves (\ref{Ppinu-eqn}).

For each $\sigma \in C_D$, we have
$$
\ell(\sigma) = m - |\mbox{fix}(\Sigma(\sigma))| + |\nu(\sigma)|.
$$
Thus, the summand in (\ref{summand-eqn}) associated to $\pi \in S_{\sV} \setminus \mbox{id}$, and $\nu \subset \mbox{fix}(\pi)$ is given
by
\begin{eqnarray*}
x^{|\mbox{fix}(\pi)|-|\nu|}\prod_{v \notin \mbox{fix}(\pi)} (-a_{v,\pi(v)}) \prod_{v \in \nu} (- a_{v,v}) 
&=&
(-1)^{m- |\mbox{fix}(\pi)| + |\nu|}\sum_{\sigma \in P_{\pi,\nu}} x^{m - \ell(\sigma)}\\
&=&
\sum_{\sigma \in P_{\pi,\nu}} (-1)^{\ell(\sigma)} x^{m - \ell(\sigma)},
\end{eqnarray*}
and similarly for  $\pi = \mbox{id}$ we have
\begin{eqnarray*}
A_\pi &=& \prod_{v \in \sV} (x - a_{v,v})\\
&=&
x^m + \sum_{\sigma \in P_{\pi,\nu}} (-1)^{\ell(\sigma)} x^{m - \ell(\sigma)}.
\end{eqnarray*}
For each $\sigma \in C_D$, $\sign (\Sigma(\sigma)) = (-1)^{\ell(\sigma) - |\sigma|}$.
Putting this together, we have
\begin{align*}
P_D(x) &= \sum_{\pi \in S_\sV} \sign(\pi) A_\pi\\
&= x^m +  \sum_{\pi \in S_\sV} \sum_{\sigma \in C_D\from\ \Sigma(\sigma) = \pi}
(-1)^{\ell(\sigma) - |\sigma|} (-1)^{\ell(\sigma)} x^{m-\ell(\sigma)}\\
&= x^m + \sum_{\sigma \in C_D} (-1)^{|\sigma|} x^{m- \ell(\sigma)}. \qedhere
\end{align*}
\end{proof}

\subsection{McMullen Cones}\label{McMullen-cone-sec}  Each group ring element partitions $\Hom(G;\R)$ into a union of
cones defined below.

\begin{definition}(cf. \cite{McMullen_Alexander_norm})\label{McMullen_cone-def}
Let $G$ be a finitely generated free abelian group.
Given an element $\theta =  \sum_{g \in G} a_g g \in \ZZ G$, 
 the {\it support} of $\theta$ is the set
$$
\Supp(\theta) = \{g \in G  \ | \ a_g \neq 0\}.
$$
Let $\theta \in \ZZ G$ and $g_0 \in \Supp(\theta)$ the {\it McMullen cone} of $\theta$ for $g_0$ is the set 
$$
\calT_\theta(g_0) = \{\alpha \in \Hom(G;\R) \mid  \alpha(g_0) > \alpha (g)\  \mbox{for all $g \in \Supp(\theta) \setminus \{g_0\}$}\}.
$$
\end{definition}

\begin{remark}\label{NewtonPolygon-rem}
The elements of $G$ can be identified with a subset of the dual
space 
$$
\widehat{ \Hom(G;\R)} = \Hom(\Hom(G;\R),\R)
$$
 to $\Hom(G;\R)$.  Let $\theta \in \Z G$ be any element.
The convex hull of $\Supp(\theta)$ in $\widehat{ \Hom(G;\R)}$ is called the {\it Newton polyhedron} $\mathcal N$ of $\theta$.
Let $\widehat{ \mathcal N}$ be the dual of $\mathcal N$ in $\Hom(G;\R)$.  That is, each top-dimensional
face of $\widehat {\mathcal N}$
corresponds to a vertex $g \in \mathcal N$, and each $\alpha$ in the cone over this face
has the property that $\alpha (g) > \alpha (g')$ where $g'$ is any vertex of $\mathcal N$ with $g\neq g'$.  
Thus, the McMullen cones $\calT_\theta(g_0)$, for $g_0 \in \Supp(\theta)$ are the cones over the top dimensional faces of
the dual  to the Newton polyhedron of $\theta$.
\end{remark}

\subsection{A coefficient theorem for $H$-labeled digraphs}\label{labeled-digraphs-sec}
Throughout this section let $H$ be the free abelian group with $k$ generators and let 
$\Z H$ be its group ring. Let $G = H \times \langle s \rangle$, where $s$ is an extra 
free variable.   Then the Laurent polynomial ring $\Z H (u)$ is canonically isomorphic to 
$\Z G$, by an isomorphism that sends $s$ to $u$.

We generalize the results of Section~\ref{plain-digraph-sec} to the setting of 
$H$--labeled digraphs.  

\begin{definition}\label{cyclelabeling-def}
Let $C$ be a simplicial complex. An \emph{$H$--labeling} of $C$ is a map 
$$
h \from C \rightarrow H
$$
compatible with the simplicial complex structure of $H$, i.e., 
$$
h(\sigma) = \sum_{i=1}^\ell h(v_i)
$$
for  $\sigma = \{v_1,\dots,v_\ell\}$.
An { \it $H$-complex}\ $\calC^H$ is an abstract simplicial complex together with a $H$-labeling. 
\end{definition}

\begin{definition}\label{CycleFunction-def} The {\it cycle function} of an $H$--labeled 
complex $\calC^H$ is the element of $\Z H$ defined by
$$
\theta_{\calC^H}  = 1 + \sum_{\sigma \in \calC^H} (-1)^{|\sigma|} h(\sigma)^{-1}.
$$
\end{definition}

\begin{definition}\label{Glabeled-def} An {\it $H$-digraph} $\Dlab^H$ is a  digraph $D$, along with a map 
$$
h \from \calE_D \to H,
$$
where $\calE_D$ is the set of edge of $D$. 
 The digraph $D$ is the {\it underlying digraph}  of $\Dlab^H$.

An $H$-labeling on a digraph induces an $H$-labeling on its cycle complex.
Let $\gamma$ be a simple cycle on $D$.  Then up to isotopy, $\gamma$ can
be written as
$$
\gamma = e_0 \cdots e_{k-1},
$$
for some collection of edge $e_0,\dots,e_{k-1}$ cyclically joined end to end on $D$.
Let 
$$
h(\gamma) = h(e_0) + \ldots + h(e_{k-1}),
$$
and for $\sigma = \{\gamma_1,\dots,\gamma_\ell\}$, let
$$
h(\sigma) = \sum_{i=1}^{\ell} h(\gamma_i).
$$
Denote  the labeled cycle complex by $\calC_\Dlab^H$.The {\it cycle polynomial} 
$\theta_{\Dlab^H}$ of $\Dlab^H$ is given by
$$
\theta_{\Dlab^H} (u)= 1 + \sum_{\sigma \in \C_\Dlab^H} (-1)^{|\sigma|} h(\sigma)^{-1} u^{-\ell(\sigma)} \in \Z H [u] = \Z G.
$$
\end{definition}

The cycle polynomial of $\theta_{\Dlab^H}(u)$ contains both the information about the
associated labeled complex $\calC_\Dlab^H$ and the length functions on cycles on $D$.
One observes the following by comparing Definition~\ref{CycleFunction-def} and Definition~\ref{Glabeled-def}.

\begin{lemma}\label{cyclepoly_v_cyclefunction}
The cycle polynomial of the $H$--labeled digraph $\Dlab^H$, and the cycle function 
of the labeled cycle complex $\calC_\Dlab^H$ are related by
$$
\theta_{\calC_\Dlab^H} = \theta_{\Dlab^H}(1).
$$
\end{lemma}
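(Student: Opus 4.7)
The plan is to verify the identity by direct computation from the definitions; the statement is essentially an unpacking of notation, so there is no genuine obstacle.

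First I would recall from Definition~\ref{Glabeled-def} that
\[
\theta_{\Dlab^H}(u) = 1 + \sum_{\sigma \in \calC_\Dlab^H} (-1)^{|\sigma|} h(\sigma)^{-1} u^{-\ell(\sigma)},
\]
viewed as an element of $\Z H[u]$. The substitution $u = 1$ is a ring homomorphism $\Z H[u] \to \Z H$, and applying it term by term gives
\[
\theta_{\Dlab^H}(1) = 1 + \sum_{\sigma \in \calC_\Dlab^H} (-1)^{|\sigma|} h(\sigma)^{-1},
\]
since $1^{-\ell(\sigma)} = 1$ regardless of $\ell(\sigma)$.

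Next I would compare this with Definition~\ref{CycleFunction-def} applied to the $H$--complex $\calC_\Dlab^H$, whose $H$--labeling is the one induced from $\Dlab^H$ in Definition~\ref{Glabeled-def}. By definition,
\[
\theta_{\calC_\Dlab^H} = 1 + \sum_{\sigma \in \calC_\Dlab^H} (-1)^{|\sigma|} h(\sigma)^{-1}.
\]
The two right-hand sides coincide term by term, which yields the claimed equality. The only mild point worth remarking on is that the $h$ appearing in the cycle-function definition is exactly the induced labeling $h(\sigma) = \sum_i h(\gamma_i)$ on cycles from Definition~\ref{Glabeled-def}, so the sums are indexed by the same simplices with the same labels; no reindexing or reorganization is required.
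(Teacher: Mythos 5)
Your argument is correct and is exactly what the paper intends: the lemma is stated as an observation obtained by comparing Definition~\ref{CycleFunction-def} with Definition~\ref{Glabeled-def}, and your substitution $u=1$ with a term-by-term match of the induced labeling is precisely that comparison. Nothing further is needed.
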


\begin{definition}\label{pos-def} 
An element $\theta \in \ZZ H$ is \emph{positive}, denoted $\theta>0$, 
if 
$$
\theta = \sum_{h \in H} a_h h,
$$
where $a_h \ge 0$ for all $h \in H$, and $a_h > 0$ for at least one $h \in H$.
If $\theta$ is positive or $0$ we say that it is non-negative and write $\theta \geq 0$. 
\end{definition}

A matrix $M^H$ with entries in $\ZZ H$ is called an \emph{$H$--matrix.}  If all entries 
are non-negative, we write $M^H \ge 0$ and if all entries are positive we write 
$M^H > 0$.
 
\begin{lemma} 
There is a bijective correspondence between $H$--digraphs $\Dlab^H$ and 
non-negative $H$--matrices $M_\Dlab^H$, so that $M_\Dlab^H$ is the directed 
incidence matrix for $\Dlab^H$.
\end{lemma}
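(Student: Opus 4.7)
The plan is to construct the correspondence explicitly in both directions and observe that the two constructions are mutually inverse, making the proof essentially a matter of unpacking definitions.

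First, I would fix an ordering $v_1, \dots, v_m$ on the vertex set of an $H$--digraph $\Dlab^H$ (with underlying digraph $D$) and define the $(i,j)$--entry of its directed incidence matrix by
\[
(M_\Dlab^H)_{i,j} \;=\; \sum_{e \in \calE_{i,j}} h(e) \;\in\; \Z H,
\]
where $\calE_{i,j} \subseteq \calE_D$ is the (finite) set of directed edges from $v_i$ to $v_j$. Because the labeling takes values in $H$ (not in $\Z H$), each coefficient in this sum is a non-negative integer, so $M_\Dlab^H \geq 0$ in the sense of Definition~\ref{pos-def}. This recovers the ordinary adjacency matrix in the case where $H$ is trivial and $h$ is constant.

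Second, I would construct the inverse assignment. Given a non-negative $H$--matrix $M^H = [m_{i,j}]$ of size $m \times m$, write each entry uniquely as
\[
m_{i,j} \;=\; \sum_{h \in H} a_{i,j,h}\, h, \qquad a_{i,j,h} \in \Z_{\geq 0},
\]
with only finitely many nonzero coefficients. Build a digraph $D$ with vertex set $\{v_1, \dots, v_m\}$, and for each triple $(i,j,h)$ place $a_{i,j,h}$ distinct edges from $v_i$ to $v_j$, each labeled by $h \in H$. This yields an $H$--digraph $\Dlab^H$ whose underlying digraph is well-defined up to relabeling parallel edges.

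Finally, I would verify the two constructions are mutually inverse. Starting from $\Dlab^H$, forming the incidence matrix and then re-expanding recovers exactly the set of edges from $v_i$ to $v_j$ together with their labels, up to the harmless reindexing of parallel edges carrying the same label. Starting from a non-negative $M^H$, the incidence matrix of the constructed digraph returns $\sum_h a_{i,j,h} h = m_{i,j}$ in each entry. The only mild subtlety is the identification convention for parallel edges with identical labels, which I would handle by declaring such edges unordered (so the $H$--digraph is determined by the multiset of labels on $\calE_{i,j}$ for each $(i,j)$); with this convention the two maps are strict inverses. No genuine obstacle arises, since the statement is essentially a tautological reformulation of non-negative $\Z H$--valued matrices as formal sums of $H$--labeled edges.
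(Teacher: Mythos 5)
Your proposal is correct and matches the paper's proof: both directions are given by the same explicit constructions (summing labels over the edge sets $E_{ij}$ to form the matrix, and expanding each non-negative entry into labeled parallel edges), and the verification that they are inverse is the same routine check. Your extra remark about treating identically-labeled parallel edges as unordered is a harmless refinement the paper leaves implicit.
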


\begin{proof}  
Given a labeled digraph $\Dlab^H$, let $E_{ij}$ be the set of edges from the 
$i^{\, \text{th}}$ vertex to the $j^{\, \text{th}}$ vertex. We form a matrix $M_\Dlab^H$ 
with entries in $\ZZ H$ by setting 
\[ a_{ij} = \sum_{e \in E_{ij}} h(e), \]
where $h(e)$ is the $H$-label of the edge $e$.

Conversely, given an
 $n \times n$  matrix $M^H$ with entries in $\ZZ H$,
let $\Dlab^H$ be the $H$-digraph with $n$ vertices $v_1,\dots,v_n$ and, for each $i,j$  with
$m_{i,j}=\sum_{h \in H} a_g g \geq 0$, it has $a_h$ directed edges from $v_i$ to $v_j$ labeled by $h$.  The directed incidence matrix $M^H_{\Dlab}$
equals $M$ as desired.
\end{proof}

The proof of the next theorem is similar to that of the Theorem~\ref{coeff-thm} and is 
left to the reader. 

\begin{theorem}[The Coefficients Theorem for $H$--labeled digraphs]
\label{CTlabeled-thm}
Let $\Dlab^H$ be an $H$--labeled digraph with $m$ vertices, and 
$P_\Dlab(u)\in \Z H[u]$ be the characteristic polynomial of its incidence matrix. 
Then,
$$
P_\Dlab(u) =  u^m \theta_{\Dlab^H}(u).
$$
\end{theorem}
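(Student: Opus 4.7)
The plan is to mirror the proof of Theorem~\ref{coeff-thm} almost verbatim, with the only new ingredient being the distribution of $H$-labels across the determinant expansion. I would begin from
\[
P_\Dlab(u) = \det(uI - M_\Dlab^H) = \sum_{\pi \in S_{\sV}} \sign(\pi)\, A_\pi,
\]
where
\[
A_\pi = \prod_{v \notin \mbox{fix}(\pi)}(-m_{v,\pi(v)}) \prod_{v \in \mbox{fix}(\pi)}(u - m_{v,v})
\]
and each matrix entry $m_{v,w} = \sum_{e \in E_{v,w}} h(e) \in \Z H$ is the sum of the edge labels from $v$ to $w$. I would then reuse verbatim the bijection $\Sigma \from \calC_\Dlab \to S_{\sV}$ and the self-loop subset $\nu(\sigma) \subset \mbox{fix}(\Sigma(\sigma))$ of the unlabeled proof to index the terms.

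The first step is the binomial expansion of the fixed-point factor,
\[
\prod_{v \in \mbox{fix}(\pi)}(u - m_{v,v}) = \sum_{\nu \subset \mbox{fix}(\pi)} u^{|\mbox{fix}(\pi)| - |\nu|}\prod_{v \in \nu}(-m_{v,v}),
\]
so that each summand of $A_\pi$ is indexed by a pair $(\pi,\nu)$ and contributes $u^{m-\ell(\sigma)}$ for every cycle $\sigma$ with $\Sigma(\sigma) = \pi$ and $\nu(\sigma) = \nu$, via the identity $\ell(\sigma) = m - |\mbox{fix}(\pi)| + |\nu|$ already established in the proof of Theorem~\ref{coeff-thm}.

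The crucial new ingredient, which replaces the integer counting identity~(\ref{Ppinu-eqn}), is the observation that after distributing the sums defining the $m_{v,w}$ one obtains
\[
\prod_{v \notin \mbox{fix}(\pi)} m_{v,\pi(v)} \prod_{v \in \nu} m_{v,v} = \sum_{\sigma \in P_{\pi,\nu}} h(\sigma),
\]
because a choice of one edge $e \in E_{v,w}$ at each non-fixed vertex and at each self-looped fixed vertex is exactly the data parameterizing a cycle $\sigma \in P_{\pi,\nu}$, and the product of the chosen edge labels is, by Definition~\ref{Glabeled-def}, equal to $h(\sigma)$. This upgrades the integer cardinality used in the unlabeled case to a sum of group-ring elements.

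Telescoping the signs exactly as in the unlabeled proof---the factor $(-1)^{m - |\mbox{fix}(\pi)| + |\nu|}$ produced by the two products combines with $\sign(\pi) = (-1)^{\ell(\sigma) - |\sigma|}$ to give an overall $(-1)^{|\sigma|}$ on each term---yields
\[
P_\Dlab(u) = u^m + \sum_{\sigma \in \calC_\Dlab^H}(-1)^{|\sigma|}\, h(\sigma)\, u^{m-\ell(\sigma)} = u^m\, \theta_{\Dlab^H}(u).
\]
The only real obstacle is a bookkeeping one: matching the $h(\sigma)$ produced by the expansion with the $h(\sigma)^{-1}$ written in Definition~\ref{Glabeled-def}, which amounts to fixing whether the matrix $M_\Dlab^H$ is built from $h(e)$ or from $h(e)^{-1}$. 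Once that sign-and-inverse convention is pinned down, the combinatorial content of the argument is formally identical to that of Theorem~\ref{coeff-thm}.
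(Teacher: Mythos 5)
Your proof is correct and is exactly the argument the paper intends: the paper explicitly leaves this statement to the reader as being ``similar to Theorem~\ref{coeff-thm}'', and your only genuinely new ingredient---replacing the counting identity~(\ref{Ppinu-eqn}) by the group-ring identity $\prod_{v \notin \mathrm{fix}(\pi)} m_{v,\pi(v)}\prod_{v \in \nu} m_{v,v} = \sum_{\sigma \in P_{\pi,\nu}} h(\sigma)$---is precisely the right one, with the sign bookkeeping carried over unchanged. The inversion mismatch you flag at the end is real, but it is a convention slip in the paper itself (Definition~\ref{Glabeled-def} puts $h(\sigma)^{-1}$ in the cycle polynomial while the incidence matrix is built from $h(e)$; the paper later reconciles this by applying Theorem~\ref{CTlabeled-thm} to the conjugate digraph), not a gap in your argument.
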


Note that $P_\calD(u)$ is an element of $\Z G$. 

\subsection{Expanding $H$--matrices}\label{Gmat-sec}
In this section we recall a key theorem of McMullen on leading eigenvalues of 
specializations of expanding $H$--matrices (see \cite{McMullen_polynomial_invariants}, 
Appendix A). Although McMullen's theorem is stated for Perron-Frobenius
matrices, the proof only uses only that the matrix is expanding. 

\begin{definition}\label{PF_Gmat-def}
A labeled digraph $\calD^H$ is called \emph{expanding} if the underlying 
digraph $\calD$ is expanding. The $H$--matrix $M_\calD^H$ is defined to be 
\emph{expanding} if the associated labeled digraph $\calD^H$ is expanding. 
\end{definition} 

For the rest of this section, we fix an expanding $H$--labeled digraph $\calD^H$. 
Consider an element $\t \in \Hom(H,\RR_+)$. Define $M_\calD^H(\t)$ to be the real valued matrix 
obtained by applying $\t$ to the entries of $M_\calD^H$ (where $\t$ is extended linearly
to $\Z H$). Alternatively,  identify $H$ with space of monomials in 
$k$ variables $t_1, \ldots, t_k$. This gives a natural identification 
of $\Hom(H,\RR_+)$ with $\R_+^k$ where the $i^{\, \text{th}}$ coordinate in
$\R_+^k$ is associated to the variable $t_i$. Then $M_\calD^H(\t)$ is
be the matrix obtained by replacing $t_i$ with $i^{\, \text{th}}$ coordinate of 
$\t \in \R_+^k= \Hom(H,\RR_+)$. 

Note that, since $\calD^H$ is expanding, for every $\t \in \R_+^k$, the real valued 
matrix $M_\calD^H(\t)$ is also expanding. Define a function
\[
E \from \R_+^k \to \R_+, 
\qquad\text{by}\qquad  
E(\t) = \lambda\big(M_\calD^H(\t) \big).
\]
Identifying the ring $\Hom(H,\RR)$ with $\RR^k$, there is a 
natural map 
\[
\exp \from \Hom(H,\RR) \to \Hom(H,\RR_+),
\]
where, for $\w = (w_1, \ldots, w_k) \in \R^k$, 
\[
\exp(\w) = (e^{w_1}, \ldots, e^w_k).
\]  
Define
\[
\delta \from \R^k \to \R, \qquad\text{by}\quad \delta(\w)= \log E(e^\w).  
\] 
Note that the graph of the function $\delta$ lives in $\R^k \times \R$
which can be naturally identified with $\Hom(G; \R)$. 
 
\begin{theorem}[McMullen \cite{McMullen_polynomial_invariants}, Theorem A.1]
\label{McMullen-thm} 
For an expanding $H$--labeled digraph $\calD^H$, we have the following.
\begin{enumerate}
\item The function $\delta$ is real analytic and convex.
\item The graph of $\delta$ meets every ray through the origin of 
$\R^k \times \R$ at most once.
\item\label{thisitem} 
For $Q(u)$ any factor of $P_\calD(u)$, where $Q(E(\t)) = 0$ for all 
$\t \in \R_+^k$, and for $d = \deg(Q)$, the set of rays passing 
through the graph of $\delta$ in $\R^k \times \R$ coincides with the 
McMullen cone $\calT_Q(u^d)$.
\end{enumerate}
\end{theorem}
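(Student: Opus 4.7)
The plan is to follow McMullen's original argument in the appendix of \cite{McMullen_polynomial_invariants}, adapted to our expanding setting. For (1), analyticity of $\delta$: since $\calD^H$ is expanding, Theorem~\ref{EX-thm} gives that for every $\t \in \R_+^k$ the spectral radius $E(\t)$ is a \emph{simple} root of the characteristic polynomial $P_\calD^{(\t)}(u)$, and the coefficients of $P_\calD(u) \in \Z H[u]$ are polynomials in $\t$. The implicit function theorem then produces a real analytic branch $\t \mapsto E(\t)$ on $\R_+^k$, and composing with the analytic map $\w \mapsto \exp(\w)$ yields analyticity of $\delta$. For convexity, the $(i,j)$-entry of $M_\calD^H(\exp(\w))^m$ decomposes as
$$
[M_\calD^H(\exp(\w))^m]_{ij} \;=\; \sum_{\gamma} e^{\langle h(\gamma), \w \rangle},
$$
summed over directed walks $\gamma$ of length $m$ from $v_i$ to $v_j$ with accumulated $H$-label $h(\gamma)$. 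Each summand is log-linear in $\w$, so the entry is log-convex, and Theorem~\ref{EX-thm} identifies $\delta(\w)$ with the limit $\lim_{m\to\infty}\tfrac{1}{m}\log [M_\calD^H(\exp(\w))^m]_{ij}$, a pointwise limit of convex functions.

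For (2), a ray through the origin in direction $(\w_0, r_0)$ meets the graph of $\delta$ precisely at the positive solutions of $f(s) := \delta(s\w_0) - s r_0 = 0$; the value $s = 0$ is ruled out because $\delta(0) = \log \lambda(M_\calD) > 0$ by expansion, so $f(0) > 0$. The function $f$ is convex because $\delta$ is. I would then promote convexity to strict convexity using strong connectivity of $\calD$ (a consequence of expansion) together with a Kingman-type irreducibility argument: the Perron eigenvalue of an irreducible matrix whose entries are log-convex in a parameter is strictly log-convex in that parameter. With $f$ strictly convex and $f(0) > 0$, two positive zeros are impossible. This upgrade to strict convexity is the main technical obstacle, and is the one place where the specific combinatorial content of ``expanding'' is used non-trivially.

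For (3), identify $\R^k \times \R$ with $\Hom(G;\R)$ where $G = H \times \langle u\rangle$. Writing $Q = \sum_{g \in \Supp(Q)} a_g\, g$ (monic in $u$, so $u^d \in \Supp(Q)$ with coefficient $1$), the ray in direction $\alpha = (\w_0, r_0)$ meets the graph of $\delta$ iff the exponential polynomial
$$
Q^{(\alpha)}(e^s) \;=\; \sum_{g \in \Supp(Q)} a_g\, e^{s\alpha(g)}
$$
has a positive zero $s > 0$ whose corresponding eigenvalue is the Perron branch selected in step (1). A Newton polygon analysis of $Q^{(\exp(s\w_0))}(u)$ as a polynomial in $u$ shows that its largest root scales as $e^{s\nu^*(\w_0)}$ with $\nu^*(\w_0) = \max_{i<d} M_i(\w_0)/(d-i)$, where $M_i(\w_0) = \max\{\langle \w_0, h\rangle : h u^i \in \Supp(Q)\}$. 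Hence the ray hits the graph precisely when $r_0 > \nu^*(\w_0)$, and unwinding the maximum shows this inequality is equivalent to $\alpha(u^d) > \alpha(g)$ for every other $g \in \Supp(Q)$, which is exactly the condition defining the McMullen cone $\calT_Q(u^d)$. Property (2) from the previous step ensures the intersection is a single point, completing the identification.
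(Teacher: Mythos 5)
First, a point of context: the paper does not prove this theorem; it cites McMullen's Appendix A and only remarks that the Perron--Frobenius hypothesis can be weakened to ``expanding''. So your attempt is measured against McMullen's argument. Your treatment of (1) is essentially that argument (simple leading eigenvalue from Theorem~\ref{EX-thm} plus the implicit function theorem for analyticity; log-convexity of the entries of powers for convexity), up to two harmless points: Theorem~\ref{EX-thm} gives a $\limsup$, not a limit (this is fine, since a pointwise $\limsup$ of convex functions is still convex), and one should say why the analytic branch produced by the implicit function theorem remains the spectral radius nearby (positivity of the eigenvector / uniqueness of the eigenvalue with a positive eigenvector). Your outline of (3) is also the standard Newton-polygon analysis and is correct in spirit, modulo details you gloss over: the coefficients of $Q$ have mixed signs, so on non-generic rays the leading exponential terms of a coefficient can cancel, and the lower bound for the house in terms of $\max_i |c_i|^{1/(d-i)}$ (via Vieta) has to be stated to get the two-sided comparison; one then passes from generic rays to the full open convex cones.

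The genuine gap is in (2). Your plan is: upgrade convexity of $f(s)=\delta(s\w_0)-sr_0$ to \emph{strict} convexity, and conclude from $f(0)>0$ that there is at most one positive zero. Both halves fail. A strictly convex function with $f(0)>0$ can perfectly well have two positive zeros (e.g.\ $f(s)=(s-1)(s-2)$), so strict convexity does not give the conclusion. Moreover, strict convexity along a ray is simply false in general for expanding $H$--labeled digraphs: if all edge labels pair identically with $\w_0$ (for instance, all labels equal), then $\delta(s\w_0)=s\langle h_0,\w_0\rangle+\log\lambda(M_\calD)$ is affine in $s$; Kingman-type results give log-convexity of the Perron root, never strictness. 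So the ``main technical obstacle'' you flag is not the right obstacle, and the argument as proposed cannot be completed.

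What is actually needed is a radial subhomogeneity statement that uses the structure of $\delta$ beyond convexity: for $c\ge 1$ one has $E(\t^c)\le E(\t)^c$ (entrywise, $(\sum a_h \t^h)^c \ge \sum a_h \t^{ch}$ for nonnegative integer $a_h$, together with $\rho(A^{\circ c})\le \rho(A)^c$ for nonnegative matrices, proved by the walk-counting/H\"older estimate), equivalently $\delta(c\w)\le c\,\delta(\w)$. Hence $s\mapsto \delta(s\w_0)/s$ is non-increasing, so two intersection points $s_1<s_2$ of the ray with the graph force $\delta(s\w_0)=s r_0$ on all of $[s_1,s_2]$; by real analyticity of $\delta$ (part (1)) this extends to the whole ray, and letting $s\to 0^+$ gives $\delta(0)=0$, contradicting $\delta(0)=\log\lambda(M_\calD)>0$, which is exactly where expansion enters. (Equivalently: every tangent line to $s\mapsto\delta(s\w_0)$ has strictly positive intercept at $s=0$.) This subhomogeneity/positive-entropy mechanism is the missing idea in your step (2).
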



\begin{definition} 
For any expanding $H$--labeled digraph $\calD^H$, let
$d=\deg(P_\calD)$. We refer to the cone $\calT = \calT_{P_\calD}(u^d)$
as the \emph{McMullen cone} for the element $P_\calD \in \Z G$. 
Alternatively we refer to it as the McMullen cone for the $H$--matrix 
$M_\calD^H$.
\end{definition}

\begin{theorem}[McMullen \cite{McMullen_polynomial_invariants}]\label{McM-thm} 
For any expanding $H$--labeled digraph $\calD^H$ the map 
\[
L \from \Hom(G; \Z) \rightarrow \R
\] 
defined by
\[
L(\alpha) =  \log |P_\calD^{(\alpha)}|,
\]
extends to a homogeneous of degree $-1$, real analytic, convex function on 
the McMullen cone $\calT$ for the element $P_\calD$. It 
goes to infinity toward the boundary of affine planar sections of $\calT$.
\end{theorem}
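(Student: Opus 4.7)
The plan is to reduce the theorem to properties of the function $\delta$ from Theorem~\ref{McMullen-thm}. Using the decomposition $\Hom(G;\R) = \Hom(H;\R) \oplus \R$, write $\alpha \leftrightarrow (\w,v)$ with $v = \alpha(s)$. For integral $\alpha$, the specialization takes the form
\[
P_\calD^{(\alpha)}(x) = \det\!\bigl(x^v I - M_\calD^H(x^{\w})\bigr),
\]
where $x^{\w} = (x^{w_1},\ldots,x^{w_k})$. Any complex root $\mu$ must satisfy $|\mu|^v \le E(|\mu|^{\w})$ by the standard entrywise spectral-radius comparison, with equality at the positive real root where $\mu^v$ equals the Perron eigenvalue provided by Theorem~\ref{EX-thm}. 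Setting $\tau = \log|\mu|$, the house therefore satisfies the implicit equation
\[
v\,\tau = \delta(\tau\w). \qquad (\ast)
\]
Item~\ref{thisitem} of Theorem~\ref{McMullen-thm}, applied with $Q = P_\calD$, identifies $\calT$ as exactly the set of $\alpha$ whose ray meets the graph of $\delta$, and item~(2) of the same theorem gives uniqueness of the intersection. I would therefore extend $L$ to all of $\calT$ by declaring $L(\alpha)$ to be the unique solution $\tau$ of $(\ast)$.

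Three of the four asserted properties then follow routinely. Homogeneity of degree $-1$ is immediate since rescaling $\alpha \mapsto c\alpha$ preserves the ray and so divides the solution $\tau$ of $(\ast)$ by $c$. Real analyticity of $L$ on $\calT$ follows from item~(1) of Theorem~\ref{McMullen-thm} via the implicit function theorem applied to $F(\w,v,\tau) = v\tau - \delta(\tau\w)$; the required non-degeneracy $\partial_\tau F \neq 0$ at the solution is precisely the transversality of the unique intersection. For the boundary blow-up, as $\alpha \to \partial\calT$ its ray degenerates to one that is either tangent to the graph of $\delta$ at infinity or ceases to meet the graph, forcing $L(\alpha) \to \infty$ on any affine planar section.

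The main obstacle is convexity, which I intend to handle via the classical perspective-function trick. Substituting $\sigma = 1/\tau$ in $(\ast)$ yields $v = \sigma\,\delta(\w/\sigma)$, whose right-hand side $\hat\delta(\w,\sigma) := \sigma\,\delta(\w/\sigma)$ is the \emph{perspective} of the convex function $\delta$ and is jointly convex on $\R^k \times \R_{>0}$. Consequently the set
\[
S = \bigl\{(\w,v,\sigma) \in \R^k \times \R \times \R_{>0} : \hat\delta(\w,\sigma) \le v\bigr\}
\]
is convex. The concavity of $\tau \mapsto v\tau - \delta(\tau\w)$ combined with the single-intersection property shows that the fiber of $S$ over $(\w,v)$ is exactly the interval $(0,\, 1/L(\alpha)]$, so
\[
\tilde L(\w,v) \;:=\; \frac{1}{L(\w,v)} \;=\; \sup\bigl\{\sigma > 0 : (\w,v,\sigma) \in S\bigr\}.
\]
As the supremum of the last coordinate over fibers of a convex set, $\tilde L$ is concave on $\calT$; since it is also positive there, its reciprocal $L = 1/\tilde L$ is convex, completing the proof.
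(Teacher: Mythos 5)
Your construction of the extension (the ray--graph intersection $v\tau=\delta(\tau\w)$) and your convexity argument via the perspective function $\hat\delta(\w,\sigma)=\sigma\,\delta(\w/\sigma)$ are in essence the same mechanism as the paper's proof, which identifies the level set $\{L=1\}$ with the graph of $\delta$ and then shows by hand that a positive homogeneous degree $-1$ function with convex level sets is convex; the convexity part of your write-up is sound (indeed, even without knowing the fiber is all of $(0,1/L(\alpha)]$, its supremum is $1/L(\alpha)$, and the reciprocal of a positive concave function is convex).

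The genuine gap is the step asserting that the house of $P_\calD^{(\alpha)}$ satisfies $(\ast)$. Set $g(\tau)=\delta(\tau\w)-v\tau$. The entrywise spectral-radius comparison gives $g(\log|\mu|)\ge 0$ for \emph{every} root $\mu$, and the Perron eigenvalue gives a positive real root at the unique zero $\tau_0$ of $g$; this proves house $\ge e^{\tau_0}$ but not house $\le e^{\tau_0}$, since a root of larger modulus with $g(\log|\mu|)>0$ is not excluded by anything you say. Uniqueness of the intersection (Theorem~\ref{McMullen-thm}(2)) does not rule this out: the ray could meet the graph tangentially, i.e.\ $g\ge 0$ everywhere with a single zero. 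For the same reason your appeal to ``transversality of the unique intersection'' to get $\partial_\tau F\neq 0$ in the implicit function theorem is unjustified --- uniqueness is not transversality. What is missing is a proof that for $\alpha$ in the open cone $\calT=\calT_{P_\calD}(u^d)$ one has $g<0$ on $(\tau_0,\infty)$. This can be supplied from the cone inequalities themselves: writing $P_\calD(u)=u^d+\sum_{h,j}c_{h,j}\,h\,u^j$, if $g(\tau)\ge 0$ then $1\le\sum_{h,j}|c_{h,j}|\,e^{\tau(\alpha(hs^j)-\alpha(s^d))}\to 0$ as $\tau\to\infty$, so $g$ is eventually negative, and uniqueness of the zero then forces $g<0$ beyond $\tau_0$ (and, with convexity and analyticity of $\delta$, $g'(\tau_0)<0$, which is the transversality you need); alternatively, perturb $v$ slightly downward and use openness of $\calT$ with part~(3) of Theorem~\ref{McMullen-thm}. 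This missing argument is exactly the work carried out in the paper's two inclusions showing the level set of $L$ coincides with the graph of $\delta$; without it, your identification of the extension with $\log|P_\calD^{(\alpha)}|$ at integral points, and your analyticity claim, are incomplete.
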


Theorem~\ref{McM-thm} summarizes results taken from \cite{McMullen_polynomial_invariants} given in the context
of mapping classes on surfaces. For the convenience of the reader, we give a proof here.

\begin{proof} The function $L$ is real analytic since the house of a polynomial is 
an algebraic function in its coefficients.  
Homogeneity of $L(z)$ follows from the following observation: $\rho$ is a root of 
$Q(x^{\w}, x^{s})$ if and only if $\rho^{1/c}$ is a root of $Q(x^{c\w},x^{cs})$. Thus
\[ L(cz) = \log|Q(x^{c\w}, x^{cs})| = c^{-1} \log |Q(x^\w, x^s)| = c^{-1} L(z).
\]

By homogeneity of $L$, the values of $L$ are determined by the values at any level 
set, one of which is the graph of $\delta(\w)$. To prove convexity of $L$, we 
show that level sets of $L$ are convex i.e. the line connecting two points on a level 
set lies above the level set. Let $\Gamma = \{ z=(\w,s) \mid L(z) = 1\} $ and  
$\Gamma' = \{ z=(\w,s) \mid  s = \delta(\w)\}$. We show that  $\Gamma = \Gamma'$. 
It then follows that, since $\Gamma'$ is a graph of a convex function by Theorem 
\ref{McMullen-thm}, $\Gamma$ is convex.  

We begin by showing that $\Gamma' \subset \Gamma$ (cf. \cite{McMullen_polynomial_invariants}, proof Theorem 5.3).
 If $\beta = (\a,b) \in \Gamma'$ then 
$\delta(\a)=b$, hence $Q(e^\a,e^b)=0$ and $|Q(e^\a, e^b )| \geq  e$. Let  $r = L(\beta) = \log|Q(e^\a, e^b )|$.
Since $b=\delta(\a)$, by the convexity of the function $\delta$, we have
$rb \geq \delta(r\a)$. On the other hand, $Q(e^{r\a},e^{rb})=0$ hence $e^{rb}$ 
is an eigenvalue of $M(e^{r\a})$ so $rb \leq \log E(e^{r\a}) = \delta(r\a)$. We get that $rb = \delta(r\a)$. The points 
$(\a,b), (r\a,rb)$ both lie on the same line through the origin so by 
Theorem \ref{McMullen-thm} part (2), they are equal.
Thus $r=1=L(\beta)$, and hence $\beta \in \Gamma$. 

To show that $\Gamma \subset \Gamma'$ in $\calT$, note that every ray in $\calT$ initiating from the origin intersects $\Gamma$ because it intersects $\Gamma'$ by part (3) of Theorem \ref{McMullen-thm}. Because $L$ is homogeneous, level sets of $L$ intersect every ray from the origin at most once. Therefore, in $\calT$, $\Gamma=\Gamma'$ and the latter is the graph of a convex function.

We now show that if $L$ is a homogeneous function of degree -1, and has convex level sets then $L$ is convex
(cf.  \cite{McMullen_polynomial_invariants} Corollary 5.4). 
This is equivalent to showing that $1/L(z)$ is concave on $\calT$.
Let $z_1,z_2 \in T$ lie on distinct rays through the origin, and let 
$$
z_3 = s z_1 + (1-s) z_2.
$$
Let $c_i$, $i=1,2,3$, be constants so that $z_i' = c_i^{-1}z_i$ is in the level set $L(c_i^{-1}z_i) = 1$.  
Let $p$ lie on the line $[z_1',z_2']$ and on the ray through $z_3$. Then $p$ has the form 
$$
p = r z_1' + (1-r) z_2' 
$$
for $0<r<1$. If
$$
r=\frac{sc_1}{s c_1+(1-s) c_2}
$$
then we have
$$
p = \frac{z_3}{sc_1' + (1-s)c_2'}.
$$
 Since the level set for $L(z) = 1$ is convex, $p$ is equal to or above $z_3/c_3$, and we have 
\begin{eqnarray}\label{convex-eqn}
1/(sc_1 + (1-s)c_2) \ge 1/c_3.
\end{eqnarray}
Thus
\begin{eqnarray}\label{convex2-eqn}
1/L(z_3) =  c_3 \ge s c_1 + (1-s) c_2 = s/L(z_2) + (1-s)/L(z_3).
\end{eqnarray}
Thus $1/L(z)$ is concave, and hence $L(z)$ is convex. 

Let $z_n$ be a sequence of points on an affine planar section of $\calT$ approaching the boundary
of $\calT$.  Let $c_n$ be such that $c_n^{-1}z_n$ is in the level set $L(z) = 1$.  Then $L(z_n) = c_n^{-1}$ 
for all $n$.  But $z_n$ is bounded, while the level set $L(z) = 1$ is asymptotic to the boundary of $\calT$.
Therefore, $1/L(z_n)$ goes to 0 as $n$ goes to infinity.
  \end{proof}

\begin{remark} If the level set $L(z) = 1$ is strictly convex, then $L(z)$ is strictly convex. Indeed, if  $L(z) = 1$ is strictly convex, then the inequality in (\ref{convex-eqn}) is strict,
and hence the same holds for (\ref{convex2-eqn}).
\end{remark}

\subsection{Distinguished factor of the characteristic polynomial}
We define a distinguished factor of the characteristic polynomial of a Perron-Frobenius $G$-matrix.

\begin{proposition}\label{distinguishedFactor}  Let $P$ be the characteristic polynomial of a Perron-Frobenius $G$-matrix.
Then $P$ has a factor $Q$ with the properties:
\begin{enumerate}
\item  for all integral elements $\alpha$ in the McMullen cone  $\calT$,
$$
 |P^{(\alpha)}| = |Q^{(\alpha)}|,
$$
\item {\it minimality}: if  $Q_1 \in \Z G[u]$ satisfies $|Q^{(\alpha)}| = |Q_1^{(\alpha)}|$ for
all $\alpha$ ranging among the integer points of an open subcone of $\calT$,
then $Q$ divides $Q_1$,  and
\item if $r$ is the degree of $Q$ the cones $\calT_P(u^d)$ and $\calT_Q(u^r)$ are equal, where $d$ is the degree of $P$ and 
$r$ is the degree of $Q$
as elements of $\Z G[u]$.
\end{enumerate}
\end{proposition}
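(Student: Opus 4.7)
The plan is to define $Q$ as the minimal polynomial of the Perron eigenvalue function $E(\t)$ over the fraction field of $\QQ H$, viewed as a factor of $P$ in $\Z H[u] = \Z G$. Since $P = P_\calD(u)$ is monic in $u$ with coefficients in $\Z H$ and has $E(\t)$ as a root for every $\t \in \RR_+^k$, the minimal polynomial of $E$ over $\QQ H$ divides $P$ in $\QQ H[u]$. By Gauss's lemma applied to the monic $P$, this minimal polynomial may be chosen to lie in $\Z H[u]$ and be monic in $u$; call this factor $Q$. Then $Q$ is irreducible in $\QQ H[u]$ and $Q(\t, E(\t)) \equiv 0$ on $\RR_+^k$. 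Because $E$ is a simple eigenvalue of the expanding matrix $M_\calD^H(\t)$ by Theorem~\ref{EX-thm}, $Q$ occurs in $P$ with multiplicity one.

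Property (3) is immediate from Theorem~\ref{McMullen-thm}(3): applied both to $P$ (which trivially vanishes on $E$) and to $Q$ (by construction), the theorem identifies $\calT_P(u^d)$ and $\calT_Q(u^r)$ with the same object, the set of rays through the graph of $\delta$, so the two McMullen cones coincide. For Property (1), since the specialization map $\Z G \to \Z[x,x^{-1}]$ is a ring homomorphism, $Q \mid P$ yields $|Q^{(\alpha)}| \leq |P^{(\alpha)}|$. For the reverse, for an integer $\alpha = (\alpha_H, \alpha_s) \in \calT$, the proof of Theorem~\ref{McM-thm} shows that $|P^{(\alpha)}|$ is attained at a unique positive real number $\rho_\alpha$, characterized by $\alpha_s \log \rho_\alpha = \delta(\alpha_H \log \rho_\alpha)$; equivalently, the point $(\rho_\alpha^{\alpha_H}, \rho_\alpha^{\alpha_s})$ lies on the graph of $E$. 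Since $Q$ vanishes on this graph, $Q^{(\alpha)}(\rho_\alpha) = 0$, and therefore $|Q^{(\alpha)}| \geq \rho_\alpha = |P^{(\alpha)}|$.

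For Property (2), suppose $Q_1 \in \Z H[u]$ satisfies $|Q_1^{(\alpha)}| = \rho_\alpha$ for every integer $\alpha$ in an open subcone $\calU \subset \calT$. I would factor $Q_1 = c \prod_j F_j^{n_j}$ into distinct irreducibles in $\QQ H[u]$ with $F_j \in \Z H[u]$ (via Gauss). For each integer $\alpha \in \calU$, the house $\rho_\alpha$ of $Q_1^{(\alpha)}$ is attained by a root of some $F_j^{(\alpha)}$; a pigeonhole argument over the finitely many factors produces an index $j_0$ and an open subset $\calU' \subset \calU$ such that $F_{j_0}^{(\alpha)}$ has a root $\xi_\alpha$ with $|\xi_\alpha| = \rho_\alpha$ for a set of integer $\alpha \in \calU'$ dense in $\calU'$. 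The next step is to combine Theorem~\ref{McMullen-thm}(2) (the graph of $\delta$ meets every ray through the origin at most once) with the real analyticity of $E(\t)$ and the continuity of roots under specialization to conclude that the zero locus of $F_{j_0}$ meets the graph of $E$ in an open set; by analytic continuation $F_{j_0}$ then vanishes identically on the graph of $E$. Since $Q$ is the minimal polynomial of $E$ and $F_{j_0}$ is irreducible, $F_{j_0}$ is an associate of $Q$, yielding $Q \mid Q_1$.

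The main obstacle is precisely this final step of Property (2): the modulus equality $|\xi_\alpha| = \rho_\alpha$ does not a priori force $\xi_\alpha = \rho_\alpha$, since $\xi_\alpha$ could be any complex number on the circle of radius $\rho_\alpha$. Bridging this gap requires leveraging the rigidity in Theorem~\ref{McMullen-thm}(2) together with the characterization of $E$ as the unique maximum-modulus eigenvalue of the expanding matrix $M_\calD^H(\t)$, in order to promote modulus equality on the countable dense set of integer $\alpha \in \calU'$ to actual vanishing of $F_{j_0}$ on the graph of $E$.
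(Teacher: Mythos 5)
Your construction of $Q$ is exactly the paper's: the paper defines $Q$ to be the generator of the principal ideal $I_E$ of Lemma~\ref{minpoly-lem} (the integral polynomials vanishing along the graph of the Perron eigenvalue function $E(\t)$), which is the same object as your minimal polynomial of $E$ over the fraction field of $\QQ H$ cleared into $\Z H[u]$; divisibility $Q\mid P$ holds because $P\in I_E$. Your verifications of (1) and (3) are also along the lines the paper relies on: (3) by applying Theorem~\ref{McMullen-thm}(3) to both $P$ and $Q$, and (1) by the fact, contained in the proof of Theorem~\ref{McM-thm}, that for integral $\alpha$ in the cone the house of $P^{(\alpha)}$ is the positive real $\rho_\alpha$ lying over the graph of $\delta$, so $Q^{(\alpha)}(\rho_\alpha)=0$ and $|Q^{(\alpha)}|\ge|P^{(\alpha)}|$, while $Q\mid P$ gives the reverse inequality. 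Up to this point the proposal is correct and coincides with the paper's route.

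The gap you flag in (2) is genuine: to invoke the generator property of $I_E$ you must first show that the hypothesis ``$|Q_1^{(\alpha)}|=\rho_\alpha$ at every integral $\alpha$ of an open subcone'' forces $Q_1(\t,E(\t))\equiv 0$, and knowing only that some root of $Q_1^{(\alpha)}$ has modulus $\rho_\alpha$ does not yield this -- the root could lie anywhere on the circle $|x|=\rho_\alpha$, and your pigeonhole/analytic-continuation outline never produces an actual zero of $F_{j_0}$ on the positive real graph of $E$. A toy example showing modulus data alone is insufficient: $u+t+2$ and $u-t-2$ have specializations with equal houses whenever the exponents have suitable parities, so any successful argument must genuinely exploit that the equality holds at \emph{all} integral points of an open subcone (which that example violates), not merely at a dense set of directions or at the level of amoebas. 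You should be aware, however, that the paper's own proof is silent on precisely this step: it simply declares $Q$ to be the generator of the ideal of Lemma~\ref{minpoly-lem} and asserts that the proposition follows, which implicitly assumes that the house condition in (2) places $Q_1$ in that ideal. So your proposal reproduces the paper's argument for (1) and (3) and for the definition of $Q$, and the missing bridge in (2) -- from equality of houses at integral points to vanishing along the graph of $E$ (for instance by showing $Q_1^{(\alpha)}(\rho_\alpha)=0$ for sufficiently many $\alpha$) -- is a real gap in your write-up that the paper itself does not fill either.
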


\begin{definition} Given a Perron-Frobenius $G$-matrix $M^G$, the polynomial $Q$ is called the {\it distinguished factor} of the characteristic polynomial 
of $M^G$.
\end{definition}

\begin{lemma}\label{minpoly-lem} 
Let $F(\t) \from \RR^k \to \RR$  be a function, consider the ideal
\[ I_{F} = \{ \theta \in \ZZ(\t)[u] \mid \theta(\t,F(\t))=0 \text{ for all } \t \in \RR^k \} \] 
$I_F$ is a principal ideal. 
\end{lemma}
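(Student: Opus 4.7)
\emph{Proof plan.} The plan is to verify that $I_F$ is actually an ideal and then to deduce principality by extending scalars to a fraction field and descending via Gauss's lemma.

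First I would check that $I_F$ is an ideal of $\ZZ(\t)[u]$. Closure under subtraction is immediate: if $\theta_1,\theta_2\in I_F$ then $(\theta_1-\theta_2)(\t,F(\t))=0-0=0$ for every $\t\in\RR^k$. For absorption, let $\theta\in I_F$ and let $\psi\in\ZZ(\t)[u]$ be arbitrary. Then
\[
(\psi\theta)(\t,F(\t)) \;=\; \psi(\t,F(\t))\cdot \theta(\t,F(\t)) \;=\; \psi(\t,F(\t))\cdot 0 \;=\; 0,
\]
wherever this expression is defined, so $\psi\theta\in I_F$.

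Next, to obtain a single generator, I would pass to the fraction field $K=\QQ(\t)$ and consider the extended ideal $I_F\cdot K[u]$ inside $K[u]$. Since $K$ is a field, $K[u]$ is a Euclidean domain and hence a principal ideal domain, so there exists $\tilde Q(u)\in K[u]$ with $I_F\cdot K[u]=(\tilde Q)$. Clearing denominators and stripping off content, I may assume $\tilde Q\in\ZZ[\t][u]$ and that $\tilde Q$ is \emph{primitive} as a polynomial in $u$, i.e.\ its coefficients share no non-unit common factor in the UFD $\ZZ[\t]$.

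Finally I would show that $\tilde Q$ already generates $I_F$ inside $\ZZ(\t)[u]$. For any $\theta\in I_F\subseteq K[u]$ we have $\theta=r\cdot\tilde Q$ for some $r\in K[u]$. Writing $r=a/b$ with $a\in\ZZ[\t][u]$ and $b\in\ZZ[\t]\setminus\{0\}$, the equation $b\theta=a\tilde Q$ lies in $\ZZ[\t][u]$. Gauss's lemma for the UFD $\ZZ[\t]$ implies that the content of a product of polynomials in $u$ is the product of the contents; since $\tilde Q$ is primitive, the content of $a\tilde Q$ equals the content of $a$, and so $b$ must divide every coefficient of $a$. Hence $r=a/b\in\ZZ[\t][u]\subset\ZZ(\t)[u]$, and therefore $I_F=(\tilde Q)$ as an ideal of $\ZZ(\t)[u]$.

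The step I expect to be the main obstacle is the last one: keeping careful track of the three rings $\ZZ[\t][u]\subset\ZZ(\t)[u]\subset\QQ(\t)[u]$ and applying Gauss's lemma in the right place, so that what begins as a principal generator of an extended ideal in a PID descends to a generator of $I_F$ in the intended coefficient ring.
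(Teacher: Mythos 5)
Your overall route is the one the paper takes: extend $I_F$ to $K[u]$ where $K=\QQ(\t)$ is the fraction field of $\ZZ(\t)$, use that $K[u]$ is a PID, normalize a generator $\tilde Q$, and descend by a content/Gauss argument. But as written there is a genuine gap at the very end: what you actually prove is only the inclusion $I_F\subseteq \tilde Q\cdot\ZZ(\t)[u]$, and an ideal contained in a principal ideal need not be principal. For the asserted equality $I_F=(\tilde Q)$ you must also show $\tilde Q\in I_F$, i.e.\ that $\tilde Q(\t,F(\t))=0$ for \emph{every} $\t$, and this is nowhere addressed. That membership is exactly what the paper's proof is organized around: a generator of the extended ideal is a $K[u]$-combination of elements of $I_F$, so after clearing denominators it can be written as $\theta_1(\t,u)/\sigma(\t)$ with $\theta_1\in I_F$; reducing to lowest terms $\nu/\delta$, one argues that $\nu$ still vanishes along the graph of $F$, hence $\nu\in I_F$, and then coprimality of $\nu$ and $\delta$ gives that $\nu$ generates. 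Note that from $\delta\theta_1=\sigma\nu$ one only gets $\nu(\t,F(\t))=0$ off the zero set of $\sigma$; promoting this to all $\t$ uses that this zero set is thin together with continuity of $F$ (in the application $F=L$ is real analytic). This step has real content and cannot be absorbed into a ``therefore'': for a completely arbitrary function (say $k=1$, $F\equiv 0$ except $F(1)=1$) the ideal $I_F$ contains $u(u-1)$ and $u(t-1)$ but not $u$, so the candidate generator need not lie in $I_F$; some such argument is unavoidable.

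A smaller point: your content computation is carried out over $\ZZ[\t]$, but $\theta\in I_F$ has coefficients in the Laurent ring $\ZZ(\t)$, so the identity $b\theta=a\tilde Q$ does not let you conclude that $b$ divides the content of $a$ (monomial denominators in $\theta$ can be swallowed by $b$ while $a$ stays primitive), and the stronger claim $r\in\ZZ[\t][u]$ is false in general. This is harmless for the lemma, since all you need is $r\in\ZZ(\t)[u]$: take $\tilde Q$ primitive over the UFD $\ZZ(\t)=\ZZ[t_1^{\pm1},\dots,t_k^{\pm1}]$ itself and apply Gauss's lemma there, where monomials in the $t_i$ are units; this is the ``right place'' your final sentence was worried about.
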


\begin{proof}
Let $\overline{\ZZ(\t)}[u]$ be the ring of polynomials in the variable $u$ over the quotient field of $\ZZ(\t)$.  
Since  $\overline{\ZZ}(\t)[u]$ is a principal ideal domain, $I_F$ generates a principal ideal
$\overline I_F$ in $\overline{\ZZ(\t)}[u]$.  

Let $\overline \theta_1$ be a generator of $\overline I_F$, then $\overline{\theta_1} = \frac{\theta_1(\t,u)}{\sig(\t)}$ with $\theta_1 \in I_F$. Thus $\overline\theta_1(\t,F(\t))=0$ for all $\t$. 
If $I_F$ is the zero ideal then there is nothing to prove, therefore we suppose it is not. Let 
$
\overline \theta_1(\t,u) = \frac{\nu(\t,u)}{\delta(\t)},
$ 
where $\nu$ and $\delta$ are relatively prime in $\overline{\ZZ(\t)}[u]$, a unique factorization domain.  
Since $\theta_1(\t,F(\t))=0$ for all $\t$ then $\nu(\t,F(\t))=0$ for all $\t$ and $\nu \in I_F$. 

Since $ I_F$ is not the zero ideal then $\overline I_F$ is not the zero ideal, hence $\bar \theta_1 \neq 0$ which 
implies that $\nu \neq 0$. Let $\theta \in I_F$ be any polynomial. Since $\overline\theta_1$ divides $\theta$, then 
$\nu$ divides $\theta\delta(\t)$ but since $\nu$ and $\delta$ are relatively prime, $\nu$ divides $\theta$. 
Therefore, $\nu$ is a generator of $I_E$.
\end{proof}

\begin{proof}[Proof of Proposition \ref{distinguishedFactor}] The proposition follows from Lemma \ref{minpoly-lem} by declaring $Q$ to be the generator of $I_L$ for $L \from \calT \to \RR$ defined in Theorem \ref{McM-thm}. 
\end{proof}

\section{Branched surfaces with semiflows}\label{semiflow-sec}

In this section we associate a digraph and an element $\theta_{X,\mathfrak C,\psi} \in \Z G$ to a branched surface $(X,\mathfrak C, \psi)$.  
We show that this element is invariant under certain kinds of  subdivisions of $\mathfrak C$.

\subsection{The cycle polynomial of a branched surface with a semiflow}\label{dualdigraph-sec}

\begin{definition}\label{branchsurface-def}
Given a 2-dimensional CW-complex $X$, 
a {\it semiflow}  on $X$ is a continuous map   $\psi\from X \times \RR_+ \to X$ satisfying
\begin{enumerate}[(i)]
\item $\psi(\cdot, 0) \from X \to X$ is the identity, 
\item $\psi( \cdot , t) \from X \to X$ is a homotopy equivalence for every $t \geq 0$, and
\item $\psi( \psi(x,t_0), t_1) = \psi(x,t_0+t_1)$ for all $t_0,t_1 \geq 0$. 
\end{enumerate} 
A cell-decomposition $\mathfrak C$ of $X$ is  {\it $\psi$-compatible} if the following hold.
\begin{enumerate}
\item Each $1$--cell is
either  contained in a flow line ({\it vertical}), or transversal to the semiflow
at every point ({\it transversal}).
\item\label{forwardflow-item} For every vertex $p \in \mathfrak C^{(0)}$, the image of the {\it forward flow} of $p$,
$$
\{\psi(p,t) \ | \ t \in \R_{>0}\},
$$
 is contained in $\mathfrak C^{(1)}$.
\end{enumerate}
A {\it branched surface} is a triple $(X,\mathfrak C,\psi)$, where $X$ is a 2-complex with semi-flow $\psi$ and a
$\psi$-compatible cellular structure $\mathfrak C$.
\end{definition}

\begin{remark} We think of branched surfaces as flowing downwards.  From this point of view,
Property~(\ref{forwardflow-item}) implies that every $2$--cell $c \in \mathfrak C^{(2)}$ has a
unique {\it top $1$--cell}, that is, a $1$--cell $e$ such that each point in $c$ can be realized as the forward orbit of
a point on $e$.  
\end{remark}
 
 \begin{definition}
Let $e$ be a $1$--cell on a branched surface $(X,\mathfrak C, \psi)$ that is transverse to the flow at every point.  
A \emph{hinge} containing $e$  is an equivalence class of
 homeomorphisms $\kappa \from [0,1] \times [-1,1] \into X$ so that:
\begin{enumerate}
\item  the half segment $\Delta = \{ (x, 0)\mid x \in I \}$ is mapped onto $e$,
\item the image of the interior of the $\Delta$ intersects $\mathfrak C^{(1)}$ only in $e$, and 
\item  the vertical line segments $\{x\}\times [-1,1]$ are mapped into flow lines on $X$.
\end{enumerate} 
Two hinges $\kappa_1,\kappa_2$ are {\it equivalent} if there is an isotopy rel $\Delta$ between them. 
The $2$--cell on $(X,\mathfrak C,\psi)$ containing $\kappa([0,1] \times [0,1])$ is called the {\it initial cell} of $\kappa$ and the
$2$--cell containing the point $\kappa([0,1] \times [-1,0])$ is called the {\it terminal cell} of $\kappa$.
\end{definition}

An example of a  hinge is illustrated in Figure~\ref{fig:hinge}.

\begin{figure}[htbp] 
   \centering
   \includegraphics[height=1.2in]{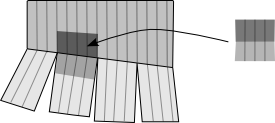} 
   \caption{A hinge on a branched surface.}
   \label{fig:hinge}
\end{figure}

\begin{definition}\label{dualdigraph-def} Let $(X,\mathfrak C, \psi)$ be a  branched surface.  The {\it dual digraph} $D$ of $(X,\mathfrak C,\psi)$ is the digraph with  a vertex for every $2$--cell and
an edge for every hinge $\kappa$ from the vertex corresponding to its initial $2$--cell  to the vertex corresponding to
its terminal $2$--cell. The dual digraph $D$ for $(X,\mathfrak C,\psi)$ embeds into $X$
$$
D \hookrightarrow X
$$
so that each vertex is mapped into the interior of the corresponding
$2$--cell, and each directed edge is mapped into the union of the two-cells corresponding
to its initial and end vertices,  and intersects the common boundary of the $2$--cells at a single point. 
The embedding is well defined up to homotopies of $X$ to itself.
\end{definition}

An example of an embedded dual digraph is shown in Figure~\ref{fig:dualdigraph}.  In this example, there are three edges
emanating from $v$ with endpoints at $w_1, w_2$ and $w_3$.  It is possible that $w_i = w_j$ for some $i \neq j$, 
of that $w_i = v$ for some $i$.  These cases can be visualized using Figure~\ref{fig:dualdigraph}, where we identify the
corresponding $2$--cells.

\begin{figure}[htbp] 
   \centering
   \includegraphics[height=1.2in]{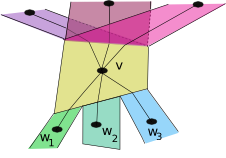} 
   \caption{A section of an embedded dual digraph.}
   \label{fig:dualdigraph}
\end{figure}

Let $G = H_1(X;\Z)/{\mbox{torsion}}$, thought of as the integer lattice  in $H_1(X,\RR)$.  The embedding of $D$ in $X$ determines 
a $G$-labeled cycle complex $\calC_D^G$ where
for each $\sig \in \calC_D^G$ 
 and $g(\sig)$ is the homology class of the cycle $\sigma$ considered as a 1-cycle on $X$. 
 
\begin{definition}\label{cyclefunction-def}
Given a branched surface $(X, \mathfrak C, \psi)$, the {\it cycle function} of $(X,\mathfrak C,\psi)$ is the group ring element
$$
\theta_{X,\mathfrak C,\psi} = 1+ \sum_{\sig \in \calC_D^G} (-1)^{|\sig|} g(\sig)^{-1}  \in \Z G.
$$
Then we have
$$
\theta_{X,\mathfrak C,\psi}  = \theta_{\calC_D^G}(1)
$$ 
where $\theta_{\calC_D^G}(u)$ is the cycle polynomial of $\calC_D^G$.
\end{definition}

\subsection{Subdivision}\label{vertical-sec}

We show that the cycle function of $(X,\mathfrak C,\psi)$ is not invariant under certain kinds of cellular subdivisions.

\begin{definition}\label{vertsub-def} Let $p \in \mathfrak C^{(1)}$ be a point in the interior of a transversal edge in
$\mathfrak C^{(1)}$.
 Let $x_0 = p$ and inductively define $x_i = \psi(x_{i-1}, s_i)$, for $i=1,\dots,r$, so that 
$$
s_i = \min\{s \ |\ \mbox{$\psi(x_{i-1},s)$ has endpoint in $\mathfrak C^{(1)}$}\}.
$$
The {\it vertical subdivision of $X$ along the forward orbit of $p$} is
the cellular subdivision $\mathfrak C'$ of $\mathfrak C$ obtained by adding the edges $\psi(x_{i-1},[0,s_i])$, for $i=1,\dots,r$,
and subdividing the corresponding $2$--cells.
 If $x_r$ is a vertex in the original skeleton $\mathfrak C^{(0)}$ of $X$,
then we say the vertical subdivision is {\it allowable}.
 \end{definition}

\begin{figure}[htbp] 
   \centering
   \includegraphics[height=1.2in]{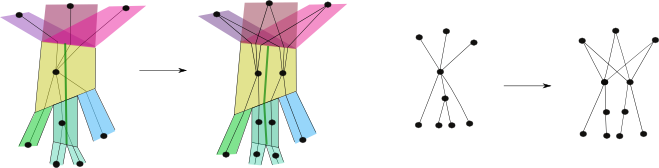} 
   \caption{An allowable vertical subdivision, and effect on the directed dual digraph.}
   \label{fig:vertsub}
\end{figure}

Figure 1 illustrates an allowable vertical subdivision with $r=2$.

\begin{proposition}\label{verticalsubdivision-prop}  Let $(X,\mathfrak C', \psi)$ be obtained from $(X,\mathfrak C,\psi)$
by allowable vertical subdivision.  Then the cycle function
$\theta_{X,\mathfrak C,\psi}$ and $\theta_{X,\mathfrak C',\psi}$ are equal.
\end{proposition}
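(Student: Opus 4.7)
\emph{Proof plan.}  My plan is to show that the effect of an allowable vertical subdivision on the dual digraph is a ``blow-up'' of the cells $c_1, \ldots, c_r$ traversed by the forward orbit of $p$ into pairs $c_i^L, c_i^R$, together with a dead-end structure on the $L$--branch that makes the cycle function term-by-term invariant, either via a direct bijection of labeled cycle complexes or via an equivalent matrix identity.

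First I would enumerate the hinges in $\mathfrak C'$ and describe $D'$.  Each vertex $c_i$ of $D$ is replaced by two vertices $c_i^L, c_i^R$ in $D'$.  The edges of $D'$ split into three classes: (i) at each intermediate transversal 1--cell split at $x_i$ ($1 \le i \le r-1$) the new hinges produce only the two parallel edges $c_i^L \to c_{i+1}^L$ and $c_i^R \to c_{i+1}^R$ and no ``cross'' edges, since the halves lie on opposite sides of the subdivision arc; (ii) at the top transversal edge containing $p=x_0$, each old hinge $a_j \to c_1$ (with $a_j$ not itself subdivided) doubles into $a_j \to c_1^L$ and $a_j \to c_1^R$; (iii) hinges disjoint from the arc are unchanged.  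The crucial consequence of the allowable condition $x_r \in \mathfrak C^{(0)}$ is that the bottom transversal 1--cell(s) of $c_r$ are not subdivided and lie on the boundary of exactly one of $c_r^L, c_r^R$; say $c_r^R$ inherits them, so $c_r^L$ has no outgoing hinge in $\mathfrak C'$.  Propagating upward, the unique outgoing edge from any $c_i^L$ ($i<r$) is $c_i^L \to c_{i+1}^L$, which funnels into the sink $c_r^L$.  Hence no directed cycle of $D'$ passes through any vertex of the $L$--branch $\{c_1^L, \ldots, c_r^L\}$.

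This dead-end structure gives the result in two equivalent ways.  Combinatorially, every simple cycle of $D'$ is either disjoint from $\{c_i^L, c_i^R : 1 \le i \le r\}$ or uses the complete $R$--branch through--path $c_1^R \to \cdots \to c_r^R$; replacing each occurrence of $c_i$ by $c_i^R$ defines a bijection $\Phi \colon \calC_D^G \to \calC_{D'}^G$ preserving $|\sigma|$ and $g(\sigma)$, because the embedded $R$--branch path is homotopic in $X$ to the original path through $c_1, \ldots, c_r$, and the cycle functions then agree term by term.  In matrix terms, the row for $c_r^L$ in $I - M_{D'}^G$ is supported only on the diagonal, and the row for each $c_i^L$ ($i<r$) has exactly one off--diagonal entry pointing into the $L$--branch chain; iterated cofactor expansion along these rows collapses $\det(I - M_{D'}^G)$ to $\det(I - M_D^G)$, giving the same conclusion via the Coefficients Theorem~\ref{CTlabeled-thm}.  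The main technical obstacle is the local analysis at $x_r$: using condition~(2) of $\psi$--compatibility in Definition~\ref{branchsurface-def}, one must verify that exactly one of $c_r^L, c_r^R$ inherits the bottom 1--cell(s) of $c_r$, and that no $c_i^L$ acquires an unintended outgoing hinge from a side transversal 1--cell of $c_i$ not affected by the subdivision.  The cleanest setting --- and the one relevant to the mapping tori of train-track maps taken up later --- is when each 2--cell has a single top and a single bottom transversal 1--cell, in which case the propagation of the dead-end structure is transparent.
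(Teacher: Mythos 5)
Your proposed structure of $D'$ is not correct in general, and the error is exactly at the point you defer (``one must verify\dots''). A $2$--cell of a branched surface here has a unique top $1$--cell but typically \emph{several} bottom $1$--cells (in the mapping torus of a train-track map the bottom of $c_e$ runs over all the edges of the path $f(e)$). The subdivision arc through $c_i$ exits through the interior of \emph{one} of these bottom $1$--cells (or, for the last cell, at a vertex that need not be a corner of the cell), so the remaining bottom $1$--cells are distributed between the two halves: both $c_i^L$ and $c_i^R$ inherit outgoing hinges. Hence there is no ``dead-end $L$--branch'', $c_r^L$ is not a sink, and directed cycles of $D'$ can perfectly well pass through the $L$--vertices (enter from a doubled incoming hinge over the split top edge, exit through a bottom $1$--cell lying in the left half). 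Consequently your claimed bijection $\Phi\from \calC_D^G \to \calC_{D'}^G$ does not exist: $C_{D'}$ genuinely contains cycles that are not lifts of cycles of $D$. The cofactor-expansion version of your argument fails for the same reason, since the rows of $I-M_{D'}^G$ indexed by the $L$--vertices are not supported as you describe. The special case you call ``cleanest'' (single top and single bottom $1$--cell per $2$--cell) is precisely the case that does not occur for the complexes this proposition is applied to.

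The paper's proof has to deal with these extra cycles, and does so by a cancellation rather than a bijection: after reducing (by induction) to a subdivision of a single $2$--cell, one builds a quotient map $q\from D'\to D$ homotopic to the identity of $X$ and a label-, length- and size-preserving lifting injection $q^*\from C_D \hookrightarrow C_{D'}$; every cycle in $C_{D'}\setminus q^*(C_D)$ contains exactly one ``matched pair'' of edges into $v_1',v_2'$, and exchanging the matched pair with its opposite defines an involution $\mathfrak r$ that preserves the homology label and length but changes the parity of the number of components. The terms of $\theta_{X,\mathfrak C',\psi}$ coming from $C_{D'}\setminus q^*(C_D)$ therefore cancel in pairs, which is what makes the two cycle functions equal. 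Your proposal is missing this cancellation mechanism entirely, so as written it only establishes the statement in the restrictive single-bottom-edge case.
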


We establish a few lemmas before proving Proposition~\ref{verticalsubdivision-prop}.  

\begin{lemma}\label{quotient-lem} 
Let $(X,\mathfrak C', \psi)$ be obtained from $(X,\mathfrak C,\psi)$
by allowable vertical subdivision.
 Let $D'$ and $D$ be the dual digraphs for $(X,\mathfrak C',\psi)$ and $(X,\mathfrak C,\psi)$.
There is a quotient map  $q\from D' \rightarrow D$ that is induced by a continuous map from $X$ to
itself that is homotopic to the identity, and 
in particular the diagram 
$$
\xymatrix{
&H_1(D';\Z)\ar[dr]\ar[r]^{q_*} &H_1(D;\Z)\ar[d]\\
&&H_1(X;\Z)
}
$$
commutes.
\end{lemma}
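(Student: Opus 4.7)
The plan is to define the quotient map $q$ on vertices and edges of $D'$ using the fact that $\mathfrak C'$ is a refinement of $\mathfrak C$, and then to realize $q$ geometrically via a self-map of $X$ that collapses the newly introduced structure.

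First I describe the cellular data introduced by the allowable vertical subdivision. The subdivision adds the $r$ new vertical $1$-cells $\psi(x_{i-1},[0,s_i])$, subdivides the transversal edges containing the new interior vertices $x_0,\dots,x_{r-1}$, and splits each traversed $2$-cell $c_i$ into two pieces $c_i^L$ and $c_i^R$ separated by the new vertical edge. In particular, every $2$-cell of $\mathfrak C'$ is either unchanged or is a half of some $c_i$, and every transversal $1$-cell of $\mathfrak C'$ is either unchanged or is a sub-segment of a transversal edge of $\mathfrak C$ (since the added edges are vertical, no new transversal edges are created).

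Next I define $q$ on vertices by sending the vertex of $D'$ associated to a $2$-cell $c' \in \mathfrak C'$ to the vertex of $D$ associated to the unique $2$-cell of $\mathfrak C$ that contains $c'$. On edges, I send a hinge around a transversal $1$-cell $e' \in \mathfrak C'$ to the hinge in $D$ around the transversal edge $e \in \mathfrak C$ with $e' \subseteq e$. This is consistent with $q$ on vertices because the two $2$-cells of $\mathfrak C'$ flanking $e'$ lie respectively inside the two $2$-cells of $\mathfrak C$ flanking $e$; a short case analysis based on which of these flanking cells happen to be split shows that distinct hinges of $D'$ sitting around sub-segments of a single edge $e$ all have initial and terminal vertices mapping to the pair of endpoints of the unique $D$-hinge around $e$, so $q$ is a well-defined surjective graph map.

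To realize $q$ as being induced by a self-map of $X$ homotopic to the identity, I fix an embedding $i_{D'}\from D' \hookrightarrow X$ with vertices placed in the interiors of their respective $2$-cells, and for each split $2$-cell $c_i$ I construct a deformation of $X$ supported in a neighborhood of $c_i$ that collapses the new vertical edge $\psi(x_{i-1},[0,s_i])$ onto a single flow line inside $c_i$, carries the two $D'$-vertices lying in $c_i^L$ and $c_i^R$ to a single point, and identifies parallel hinge arcs. Composing these local deformations over all split cells produces a continuous map $f\from X \to X$ homotopic to the identity together with an embedding $i_D\from D \hookrightarrow X$ such that $f \circ i_{D'} = i_D \circ q$. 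Commutativity of the homology diagram is then immediate from $f_* = \mathrm{id}$ on $H_1(X;\mathbb Z)$. The main technical obstacle is the local construction of $f$ near vertices $x_i$ whose transversal edge is shared between two consecutive split cells $c_i$ and $c_{i+1}$: one must ensure that the collapses defined in the two neighborhoods agree on their overlap so as to assemble into a globally continuous self-map of $X$.
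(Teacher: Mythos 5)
Your combinatorial description of $q$ matches the paper's: a $2$--cell of $\mathfrak C'$ lies in a unique $2$--cell of $\mathfrak C$, and a hinge of $\mathfrak C'$ around a sub-edge $e'\subseteq e$ induces a hinge of $\mathfrak C$ around $e$, and the strategy of realizing $q$ by a self-map of $X$ homotopic to the identity and deducing the diagram from $f_*=\mathrm{id}$ is exactly the paper's. (One small imprecision: ``the hinge in $D$ around the transversal edge $e$'' is not unique in general --- there is one hinge for each passage of a $2$--cell over $e$ from above --- so you should say that the given $\mathfrak C'$--hinge determines a specific hinge of $\mathfrak C$ around $e$; your case analysis presumably does this, but the phrasing suggests uniqueness that does not hold.)

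The substantive issue is the one you flag yourself and then leave open: assembling the local collapses into a single continuous self-map of $X$. This is a genuine loose end, and it is not just a matter of shrinking supports: the collapse attached to a split cell $c_{i+1}$ must move the doubled hinge arcs entering $c_{i+1}$ from above, and those arcs lie in $c_i$ (and its two halves), which your collapse for $c_i$ is deforming at the same time, so ``agreeing on overlaps'' is exactly where a simultaneous construction can break down. The paper sidesteps this interaction by never doing the collapses simultaneously: it first decomposes the allowable vertical subdivision into a sequence of single-$2$--cell subdivisions, working backwards from the last subdivided cell (allowability guarantees each intermediate step is again allowable, since the lower endpoint of each new vertical edge is already a vertex of the previously subdivided complex), proves the single-cell statement --- there the map is supported on the subdivided cell $c$ together with the cells above it, merging $v_1',v_2'$ and the corresponding doubled edges --- and then composes these self-maps; the $H_1$ diagram for the full subdivision is the composition of the diagrams for the individual steps. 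You can close your gap the same way: perform your collapses one split cell at a time in that sequential order and take the composition, rather than defining them all at once on overlapping neighborhoods.
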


\begin{proof}
Working backwards from the last vertically subdivided cell to the first, each allowable vertical subdivision decomposes into a 
sequence of allowable vertical subdivisions that
involve only one $2$--cell. 
An illustration is shown in  Figure~\ref{onecellsubdivision-fig}.

\begin{figure}[htbp] 
   \centering
   \includegraphics[height=1.2in]{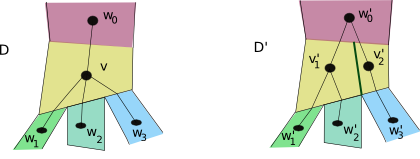} 
   \caption{Vertical subdivision of one cell.}
   \label{onecellsubdivision-fig}
\end{figure}

Let $v$ be the vertex of $D$ corresponding to the cell $c$ of $X$ that contains the new edge. 
The digraph $D'$ is constructed from $D$ by the following  steps:
\begin{enumerate}[{1.}]
\item Each vertex $u\neq v$ in $D$ lifts to a well-defined vertex $u'$ in $D'$. 
The vertex $v \in D$ lifts to two vertices $v_1', v_2'$ in $D'$. 
\item For each edge $\eps$ of $D$ neither of whose endpoints $u$ and $w$ equal $v$, the quotient map is 1-1 over $\eps$, and
hence there is only one possible lift $\epsilon'$ from
$u'$ to $w'$.
\item For each edge $\eps$ from $w \neq v$ to $v$ there are two edges $\eps'_1, \eps'_2$ where $\eps'_i$ begins
at $w'$ and ends at $v_i'$. 
\item For each outgoing edge $\eps$ from $v$ to $w$ (where $v$ and $w$ are possibly equal), there is a representative
 $\kappa$ of the hinge corresponding to $\eps$ that is contained in the union of two $2$--cells in the $\mathfrak C'$.  
 This determines a unique edge $\eps'$ on $D'$ that lifts $\eps$.
\end{enumerate}
There is a continuous map homotopic to the identity from $X$ to itself that
restricts to the identity on every cell other than $c$ or $c_w$,
where $c_w$ corresponds to a vertex $w$ with an edge from $w$ to $v$ in $D$.  On $c \cup c_w$ the map 
merges the edges $\eps'_1, \eps'_2$ so that their endpoints $v_i'$ merge to the one vertex $v$.
\end{proof}

\begin{lemma}\label{unique-lift-lem}   The quotient map $q : D \rightarrow D'$ induces an inclusion
$$
q^*: C_D \hookrightarrow C_{D'},
$$
which preserves lengths, sizes, and labels, so that for $\sigma \in C_D$, $q(q^*(\sigma)) = \sigma$.
 \end{lemma}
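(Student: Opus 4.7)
The plan is to construct $q^*$ explicitly by lifting each simple cycle one edge at a time, using the edge-by-edge description of $q\from D' \to D$ provided in the proof of Lemma~\ref{quotient-lem}. Fix a simple cycle $\gamma$ in $D$. If $\gamma$ avoids the vertex $v$, then each of its edges has a unique lift by items~(1)--(3) of the proof of Lemma~\ref{quotient-lem}, and concatenating these lifts gives a unique simple cycle in $D'$ covering $\gamma$. If $\gamma$ passes through $v$, then since $\gamma$ is an embedded circle it uses exactly one incoming edge $\eps_{\text{in}}$ and one outgoing edge $\eps_{\text{out}}$ at $v$. By item~(4) the lift of $\eps_{\text{out}}$ is unique and begins at some specific $v_i' \in \{v_1', v_2'\}$; by item~(3) the incoming edge has two lifts, one ending at each $v_j'$, and I select the one ending at $v_i'$ so the concatenation is consistent. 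All remaining edges of $\gamma$ lift uniquely, and the self-loop case at $v$ is handled directly by item~(4).

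For a multi-cycle $\sigma = \{\gamma_1,\dots,\gamma_s\}$, I lift each component separately. Since the components are vertex-disjoint in $D$ and at most one of them can contain $v$, their lifts are vertex-disjoint in $D'$, so $q^*(\sigma) \in C_{D'}$ is well-defined.

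To check the asserted properties, first $q \circ q^* = \mathrm{id}_{C_D}$ holds by construction, because each edge-lift is a $q$-preimage. Length and size preservation are immediate since $q^*$ is built edge-by-edge and sends each simple component to a single simple component. Label preservation follows from the commutative triangle in Lemma~\ref{quotient-lem}: the quotient $q$ is induced by a self-map of $X$ homotopic to the identity, so $\sigma$ and $q^*(\sigma)$ have the same image in $H_1(X;\Z)$, and hence the same $G$-label. Injectivity of $q^*$ is then automatic from $q \circ q^* = \mathrm{id}$.

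The main obstacle is the local compatibility check at $v$: for a cycle passing through $v$, I must ensure that the vertex $v_i'$ dictated by the unique lift of $\eps_{\text{out}}$ is also realized as the endpoint of one of the two lifts of $\eps_{\text{in}}$. This is guaranteed by item~(3), which produces lifts to both $v_1'$ and $v_2'$, so the required choice is always available; with this pairing fixed, the concatenated lift remains an embedded circle and $q^*(\gamma)$ is a bona fide simple cycle in $D'$, completing the construction.
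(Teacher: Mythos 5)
Your proposal is correct and follows essentially the same route as the paper's own proof: lift each simple cycle edge by edge, use the unique lift of the outgoing edge at $v$ to select between the two lifts of the incoming edge, handle multi-cycles componentwise, and get label preservation from the commutative diagram of Lemma~\ref{quotient-lem} together with $q \circ q^* = \mathrm{id}$. The only cosmetic differences are that the paper spells out the self-loop case explicitly (choosing the self-edge lift at the vertex corresponding to the hinge's initial cell) and phrases the reduction to a single subdivided $2$--cell as an induction, both of which your argument uses implicitly.
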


\begin{proof} Again we may assume that the subdivision involves a vertical subdivision of one $2$--cell $c$
corresponding to the vertex $v \in D$ and then use induction.   It is enough to define lifts  of simple cycles on $D$ to 
a simple cycle in $D'$.
All edges in $D$ from $u$ to $w$ with $w \neq v$ have a unique lift in $D'$.
Thus, if $\gamma$ does not contain $v$ then there is a unique $\gamma'$ in $D'$ such that $q(\gamma') = \gamma$.
Assume that $\gamma$ contains $v$. If $\gamma$ consists of a single edge $\eps$, then $\eps$ is
a self-edge   from $v$ to itself, and
 $\eps$ has two lifts: a self-edge from $v_1'$ to $v_1'$  and an edge from $v_1'$ to $v_2'$,
  where $v_1'$ is the vertex corresponding to the initial cell of the hinge containing $\eps$. Thus, there is a well-defined
  self-edge $\gamma'$ lifting $\gamma$ (see Figure~\ref{selfedge-fig}).
  
  \begin{figure}[htbp] 
   \centering
   \includegraphics[height=1.2in]{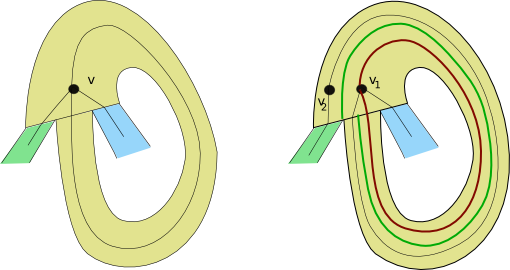} 
   \caption{Vertical subdivision when digraph has a self edge.}
   \label{selfedge-fig}
\end{figure}

Now suppose $\gamma$ is not a self-edge and contains $v$.  Let $w_1,\dots,w_{\ell-1}$ be the vertices in $\gamma$
other than $v$ in their induced sequential order.  Let $\eps_i$ be the edge from $w_{i-1}$ to $w_i$
for $i=2,\dots,\ell-1$.  Then since none of the $\eps_i$ have initial or endpoint $v$, they have unique lifts $\eps_i'$ in $D'$.
Since the vertical subdivision is allowable, there is one vertex, say $v_1'$, above $v$ with an edge $\eps_1'$ from
$ v_1'$ to $w_2'$.  Let $\eps_\ell'$ be the edge from $w_{\ell-1}'$ to $v_1'$ (cf. Figure~\ref{onecellsubdivision-fig}).  
Let $\gamma'$ be the simple cycle with edges $\eps_1',\dots,\eps_\ell'$.

Since the lift of a simple cycle is simple, the lifting map determines a well-defined map $q^*: C_D \rightarrow C_{D'}$
that  satisfies $q \circ q^* = \mbox{id}$ and preserves size.
The commutative diagram in Lemma~\ref{quotient-lem} implies that the images of $\sigma$ and $q^*(\sigma)$
in $G$ are the same, and hence their labels are the same.   \end{proof}

\begin{figure}[htbp] 
   \centering
   \includegraphics[height=1.2in]{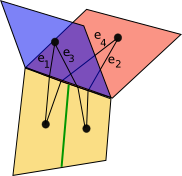} 
   \caption{A switching locus.}
   \label{match-fig}
\end{figure}

\begin{lemma}\label{extracycles-lem}  Let  $D'$ be obtained from $D$ by an allowable vertical subdivision on
a single $2$--cell.   The set of edges of each $\sigma \in C_{D'} \setminus q^*(C_D)$ contains exactly
one matched pair.
\end{lemma}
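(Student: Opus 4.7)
The plan is to determine which cycles $\sigma' \in C_{D'}$ fail to lie in $q^*(C_D)$, and then to locate the unique matched pair in each such cycle.

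First I characterize the image $q^*(C_D)$. The construction of $q^*$ in the proof of Lemma~\ref{unique-lift-lem} lifts a simple cycle $\gamma \in C_D$ passing through $v$ to a simple cycle passing through exactly one of $v_1', v_2'$, the choice being dictated by the outgoing edge of $\gamma$ at $v$ (whose unique lift starts at a specific $v_i'$). Since the simple cycles comprising any $\tau \in C_D$ are vertex-disjoint, at most one of them passes through $v$, and the lifts of the other simple cycles avoid $\{v_1', v_2'\}$ altogether. Thus every element of $q^*(C_D)$ contains at most one of $v_1', v_2'$. Conversely, if $\sigma' \in C_{D'}$ contains at most one of these two vertices then $q(\sigma')$ is itself a disjoint union of simple cycles in $D$ (the merging $v_1' \sim v_2'$ cannot create coincidences among distinct components), and a componentwise check shows $q^*(q(\sigma')) = \sigma'$. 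Hence
\[
C_{D'} \setminus q^*(C_D) = \{\sigma' \in C_{D'} : \sigma' \text{ contains both } v_1' \text{ and } v_2'\}.
\]

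I then fix such a $\sigma'$ and extract its matched pair. Vertex-disjointness of the simple cycles of $\sigma'$ gives a unique simple cycle $\gamma_1' \subseteq \sigma'$ through $v_1'$ and a unique simple cycle $\gamma_2' \subseteq \sigma'$ through $v_2'$, with possibly $\gamma_1' = \gamma_2'$. In a simple cycle every vertex has exactly one incoming edge, so $\sigma'$ contains exactly one edge $e_1$ ending at $v_1'$ and exactly one edge $e_2$ ending at $v_2'$; disjointness rules out any further edge of $\sigma'$ ending at either of these vertices. By the description of $D'$ in the proof of Lemma~\ref{quotient-lem}, every edge of $D'$ ending at $v_i'$ is one of the two lifts of an incoming edge to $v$ in $D$, that is, a member of a matched pair. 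Consequently $\{e_1, e_2\}$ is the unique matched pair contained in $\sigma'$.

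The principal technical point is the characterization of $q^*(C_D)$, which requires combining the explicit lift prescription from Lemma~\ref{unique-lift-lem} with the vertex-disjointness built into the definition of $C_D$; once this is in place, the counting of matched-pair edges in an ``extra'' cycle is an immediate consequence of the simplicity and disjointness of the simple cycles that make up $\sigma'$.
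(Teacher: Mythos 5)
Your proof is correct and follows essentially the same route as the paper's: both arguments boil down to the observation that a cycle lies in $q^*(C_D)$ exactly when it meets at most one of $v_1',v_2'$ (equivalently, $q$ is injective on it), and that a cycle passes through each of $v_1',v_2'$ at most once, so its unique incoming edges at these two vertices form the one and only matched pair. The only difference is presentational: you spell out the converse direction ($\sigma'$ meeting at most one $v_i'$ implies $q^*(q(\sigma'))=\sigma'$, via the unique-lift description from Lemma~\ref{quotient-lem}), which the paper compresses into the single assertion that $\sigma'\notin q^*(C_D)$ forces $q$ to be non-injective on $\sigma'$.
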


\begin{proof} Since $\sigma' \notin q^*(C_D)$, the quotient map $q$ is not injective on $\sigma'$.  Thus 
$q(\sigma')$ must contain two distinct edges $\eps_1,\eps_2$ with endpoint $v$, and these have lifts
$\eps_1'$ and $\eps_2'$ on $\sigma'$.  Since $\sigma'$ is a cycle, $\eps_1'$ and $\eps_2'$ must have
distinct endpoints, hence one is $v_1'$ and one is $v_2'$.   There cannot be more than one matched
pair on $\sigma'$, since $\sigma'$ can pass through each $v_i'$ only once.
\end{proof}

\begin{definition}
Let  $D'$ be obtained from $D$ by an allowable vertical subdivision on
a single $2$--cell.  Let $v$ be the vertex corresponding to the subdivided cell, and let $v_1'$ and $v_2'$ be its lifts to $D'$. 

For any pair of edges $\eps_1',\eps_2'$ with endpoints at $v_1'$ and $v_2'$ and distinct initial points $w_1'$ and
$w_2'$, there is a corresponding pair of edges $\eta_1',\eta_2'$ from $w_1'$ to $v_2'$ and from $w_2'$ to $v_1'$.
Write
$$
 \mbox{op} \{\eps_1',\eps_2'\} =\{\eta_1',\eta_2'\}.
$$
We call the pair $\{\eps_1',\eps_2'\}$  a {\it matched pair}, and 
$\{\eta_1',\eta_2'\}$ its {\it opposite}.  (See Figure~\ref{match-fig}).  
\end{definition}

\begin{lemma} If $\sigma' \in C_{D'}$ contains a matched pair, the edge-path
obtained from $\sigma'$ by exchanging the matched pair with its opposite is a cycle.
\end{lemma}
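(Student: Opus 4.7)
The plan is a straightforward case analysis once the existence of the opposite edges is checked. First, I would confirm that $\eta_1' \from w_1' \to v_2'$ and $\eta_2' \from w_2' \to v_1'$ really are edges of $D'$: by step~(3) in the proof of Lemma~\ref{quotient-lem}, every edge of $D$ ending at $v$ lifts to exactly two edges of $D'$, one ending at $v_1'$ and one ending at $v_2'$, so $\eta_i'$ is the sibling lift of the underlying edge of $\eps_i'$ in $D$.

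Now write $\sigma'$ as a disjoint union of simple cycles and let $\gamma_i' \subset \sigma'$ denote the simple cycle containing $\eps_i'$. All other simple cycle components of $\sigma'$ are unaffected by the swap, so it suffices to show that replacing $\{\eps_1',\eps_2'\}$ with $\{\eta_1',\eta_2'\}$ inside $\gamma_1' \cup \gamma_2'$ yields a disjoint union of simple cycles. There are two possibilities. If $\gamma_1' = \gamma_2' =: \gamma'$, one may orient $\gamma'$ so that $\eps_1'$ is traversed before $\eps_2'$ and decompose
\[
\gamma' \;=\; w_1' \xrightarrow{\eps_1'} v_1' \to P \to w_2' \xrightarrow{\eps_2'} v_2' \to Q \to w_1'
\]
into edge-subpaths $P$ and $Q$. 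The exchange then produces the two closed edge-paths
\[
w_1' \xrightarrow{\eta_1'} v_2' \to Q \to w_1' \qquad \text{and} \qquad v_1' \to P \to w_2' \xrightarrow{\eta_2'} v_1',
\]
each a simple cycle because $\gamma'$ was simple, and disjoint because $P$ and $Q$ had disjoint vertex sets in $\gamma'$. If instead $\gamma_1' \neq \gamma_2'$, write $\gamma_i' = w_i' \xrightarrow{\eps_i'} v_i' \to P_i \to w_i'$; the swap then merges the two cycles into the single closed edge-path
\[
w_1' \xrightarrow{\eta_1'} v_2' \to P_2 \to w_2' \xrightarrow{\eta_2'} v_1' \to P_1 \to w_1',
\]
which is a simple cycle since $\gamma_1'$ and $\gamma_2'$ were themselves simple and had disjoint vertex sets.

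The only subtlety is in the first case: one needs that $\gamma'$ visits each of $v_1',v_2'$ only once so that the decomposition above makes sense. This is automatic since $\sigma'$, being a cycle in the digraph sense, visits every vertex at most once; in particular $v_1'$ and $v_2'$ each appear in at most one simple-cycle component of $\sigma'$. Apart from this observation, the argument is routine bookkeeping of the two edge swaps and their effect on the relevant component(s) of $\sigma'$.
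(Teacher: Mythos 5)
Your argument is correct, but it is organized differently from the paper's. The paper disposes of this lemma with a single local observation: a matched pair and its opposite have the same set of initial points $\{w_1',w_2'\}$ and the same set of endpoints $\{v_1',v_2'\}$, so the exchange leaves the in-degree and out-degree of every vertex in the edge-set of $\sigma'$ unchanged (each still at most one), and such an edge-set is automatically a disjoint union of simple cycles. You instead perform explicit path surgery, splitting into the two cases where $\eps_1'$ and $\eps_2'$ lie on one simple-cycle component or on two, and writing down the resulting simple cycles; your appeal to step (3) of the proof of Lemma~\ref{quotient-lem} for the existence of the opposite edges and your use of disjointness of the components of a cycle are both legitimate (the existence of the opposite is in any case built into the paper's definition of a matched pair). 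The trade-off: the paper's degree-count is shorter and avoids any case analysis or worry about degenerate subpaths (your phrase ``orient $\gamma'$ so that $\eps_1'$ is traversed before $\eps_2'$'' really means ``choose the basepoint of the traversal,'' since the orientation of a directed cycle is fixed), while your surgery proves strictly more -- it exhibits the new simple-cycle decomposition and shows the number of components changes by exactly one, which is precisely the parity statement the paper only establishes afterwards in Lemma~\ref{switching-lem}. So your route is fine, and in fact front-loads part of the next lemma's content.
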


\begin{proof}  It is enough to observe that the set of endpoints and initial points of a matched pair
and its opposite are the same.
\end{proof}

Define a map $\mathfrak r : C_{D'} \rightarrow C_{D'}$ be the map that sends each $\sigma \in C_{D'}$ to the
cycle obtained by
exchanging each appearance of a matched pair on $\sigma' \in C_{D'}$ with its opposite.

\begin{lemma}\label{switching-lem}  The map $\mathfrak r$  is a simplicial map
of order two
that preserves length and labels.  It also fixes the elements of $q^*(C_D)$, and changes
the parity of the size of elements in $C_{D'} \setminus q^*(C_D)$.
\end{lemma}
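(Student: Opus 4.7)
The plan is to verify the four asserted properties in turn; most follow readily from the combinatorial setup of the preceding lemmas, with the only genuinely topological input needed for label preservation. First, $\mathfrak r$ is unambiguously defined on $C_{D'}$: by Lemma~\ref{extracycles-lem}, every $\sigma \in C_{D'}\setminus q^*(C_D)$ contains exactly one matched pair, while elements of $q^*(C_D)$ contain none---any simple cycle in $D$ through $v$ lifts, by Lemma~\ref{unique-lift-lem}, to a simple cycle through exactly one of $v_1'$ or $v_2'$---so $\mathfrak r$ fixes $q^*(C_D)$ automatically. Length preservation is immediate since a matched pair and its opposite each consist of two edges. The involution property $\mathfrak r^2 = \mathrm{id}$ follows because the opposite of $\{\eta_1',\eta_2'\}$ is $\{\eps_1',\eps_2'\}$, and after one swap the resulting cycle has exactly one matched pair, namely $\{\eta_1',\eta_2'\}$. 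That the output is indeed a cycle (so $\mathfrak r$ is simplicial in the sense of sending cycles to cycles) is supplied by the preceding lemma.

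For label preservation, the key combinatorial fact is that each edge in $D$ from some $w \neq v$ to $v$ has exactly two lifts in $D'$, one ending at $v_1'$ and one at $v_2'$, and one naturally identifies $\eta_i'$ with the other lift of the edge $q(\eps_i')$. Consequently $q(\sigma)$ and $q(\mathfrak r(\sigma))$ agree as $1$-chains in $D$. By Lemma~\ref{quotient-lem}, $q$ is induced by a continuous self-map of $X$ homotopic to the identity, so the embedded $1$-cycles $\sigma$ and $\mathfrak r(\sigma)$ in $X$ have the same homology class in $G = H_1(X;\Z)/{\mbox{torsion}}$, yielding $g(\sigma) = g(\mathfrak r(\sigma))$.

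For the parity statement on $\sigma \in C_{D'} \setminus q^*(C_D)$, I would split into two cases according to whether the unique matched pair $\{\eps_1',\eps_2'\}$ lies in a single simple cycle of $\sigma$ or in two distinct ones. In the first case, writing the simple cycle (up to relabeling) as $\gamma = \eps_1' \cdot R_1 \cdot \eps_2' \cdot R_2$ with $R_1 \from v_1' \to w_2'$ and $R_2 \from v_2' \to w_1'$, the swap splits $\gamma$ into the two vertex-disjoint simple cycles $\eta_1' \cdot R_2$ and $\eta_2' \cdot R_1$, increasing $|\sigma|$ by one. In the second case, with $\gamma = \eps_1' \cdot Q_1$ and $\gamma' = \eps_2' \cdot Q_2$, where $Q_i$ runs from $v_i'$ back to $w_i'$, the swap fuses $\gamma$ and $\gamma'$ into a single simple cycle $\eta_1' \cdot Q_2 \cdot \eta_2' \cdot Q_1$, decreasing $|\sigma|$ by one. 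In either case $|\sigma|$ changes by exactly one, and its parity flips.

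The main obstacle is label preservation, which requires marrying the topological input of Lemma~\ref{quotient-lem}---that $q$ extends to a self-map of $X$ homotopic to the identity---with the combinatorial observation that matched-pair partners share a common image in $D$ under $q$. Once this identification is cleanly articulated, the remaining verifications are essentially direct consequences of the definitions and the preceding lemmas.
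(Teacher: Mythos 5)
Your proposal is correct, and its overall structure (involution via the order-two nature of the opposite operation, fixing of $q^*(C_D)$ because such cycles meet at most one of $v_1',v_2'$ and hence contain no matched pair, and the two-case parity analysis: one simple cycle splitting into two, or two fusing into one) coincides with the paper's proof. The one place you diverge is the label-preservation step. The paper argues directly in $X$: writing $\beta = \mathfrak r(\sigma') - \sigma' = \eta_1' + \eta_2' - \eps_1' - \eps_2'$ as a $1$-chain, it observes that $\beta$ bounds a disc in $X$ (the switching locus of Figure~\ref{match-fig}), so $g(\sigma') = g(\mathfrak r(\sigma'))$. You instead note that each $\eta_i'$ is the other lift of the edge $q(\eps_i')$, so $q$ sends $\sigma'$ and $\mathfrak r(\sigma')$ to the same $1$-cycle in $D$, and then invoke the commutative diagram of Lemma~\ref{quotient-lem} (with $q$ induced by a self-map of $X$ homotopic to the identity) to conclude equality of classes in $G$. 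These rest on the same geometric fact---matched partners are the two lifts of a single hinge of the unsubdivided complex---so the arguments are essentially equivalent; yours has the small advantage of reusing an already-established lemma rather than a new bounding-disc observation, at the cost of having to make explicit the identification of the ``opposite'' edges with the other lifts, which the paper's definition leaves implicit but which is indeed the intended reading.
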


\begin{proof} The map $\mathfrak r$ sends cycles to cycles, and hence simplicies to simplicies.
Since $\mbox{op}$ has order 2, it follows that $\mathfrak r$ has order 2.  The total number of
vertices does not change under the operation $\mbox{op}$.  It remains to check that the homology
class of $\sigma'$ and $\mathfrak r(\sigma')$ as embedded cycles in $X$  are the same, and that
the size switches parity.

There are two cases.  Either the matched edges lie on a single simple  cycle $\gamma'$ or on different
simple cycles  $\gamma_1',\gamma_2'$ on $\sigma'$.

In the first case, $\mathfrak r(\{\gamma'\})$ 
is a cycle with 2 components  $\{\gamma_1',\gamma_2'\}$.  As one-chains we have
\begin{eqnarray}\label{match-eqn}
\beta &=& \mathfrak r(\sigma') - \sigma' = \gamma_1' + \gamma_2' - \gamma' = \eta_1' + \eta_2' - \eps_1'- \eps_2'.
\end{eqnarray}
In $X$, $\beta$ bounds a disc (see Figure~\ref{match-fig}), thus $g(\gamma') = g(\gamma_1')+ g(\gamma_2')$,
and hence
\begin{eqnarray}\label{labels-eqn}
g(\sigma') &=& g(\mathfrak r(\sigma')).
\end{eqnarray}
The one component cycle $\gamma'$ is replaced by two simple cycles $\gamma_1'$ and $\gamma_2'$, and
hence the size of $\sigma'$ and $\mathfrak r(\sigma')$ differ by one.

Now suppose $\sigma'$ contains two cycles $\gamma_1'$ and $\gamma_2'$,
one passing through $v_1'$ and the other passing through $v_2'$.  
Then $\mathfrak r(\sigma')$ contains a simple cycle $\gamma'$  in place of $\gamma_1' + \gamma_2'$,
so the size decreases by one. By (\ref{match-eqn}) we have (\ref{labels-eqn}) for $\sigma'$ of this type. 
\end{proof}

\begin{proof}[Proof of Proposition \ref{verticalsubdivision-prop}] By Lemma~\ref{unique-lift-lem},
the quotient map $q: D' \rightarrow D$ induces
an injection of $q^* : C_D \hookrightarrow C_{D'}$ defined by the lifting map, and this map preserves labels.
We thus have
$$
\theta_{X,\mathfrak C,\psi} = 1 + \sum_{\sigma \in C_D} (-1)^{|\sigma|} g(\sigma)^{-1} = 1 + \sum_{\sigma' \in q^*(C_{D})} (-1)^{|\sigma'|} g(\sigma')^{-1}.
$$
The cycles in $C_{D'} \setminus q^*(C_D)$ partition into $\sigma', \mathfrak r(\sigma')$,  and
by Lemma~\ref{switching-lem}  the contributions of these pairs in $\theta_{X,\mathfrak C',\psi}$ cancel with each other. 
Thus, we have
$$
\theta_{X,\mathfrak C', \psi} =1 + \sum_{\sigma' \in C_{D'}} (-1)^{|\sigma'|} g(\sigma')^{-1} =  1 + \sum_{\sigma' \in q^*(C_{D})} (-1)^{|\sigma'|} g(\sigma')^{-1} = \theta_{X,\mathfrak C,\psi}.
$$
\end{proof}

\begin{definition}
Let $(X,\mathfrak C,\psi)$ be a branched surface and $c$ a $2$--cell. Let $p,q$ be two points on the boundary $1$-chain
$\partial c$ of $c$
that do not lie on the same $1$--cell of $\mathfrak C$.   
Assume that $p$ and $q$ each have the property that
\begin{enumerate}[(i)]
\item  it lies on a vertical
edge, or 
\item its forward flow under $\psi$ eventually lies on  a vertical $1$--cell of $(X,\mathfrak C)$.
\end{enumerate}
The {\it transversal subdivision} of $(X,\mathfrak C,\psi)$
at   $(c;p,q)$
is the new branched surface $(X,\mathfrak C',\psi)$ obtained from $\mathfrak C$ 
by doing the (allowable) vertical subdivisions of $\mathfrak C$ defined by $p$ and $q$,
and doing the additional subdivision induced by adding a $1$--cell from $p$ to $q$. 
\end{definition}

\begin{lemma}\label{trans-sub} Let $(X,\mathfrak C,\psi)$ be a branched surface, and
let $(X,\mathfrak C',\psi)$ be a transversal subdivision.  Then the corresponding cycle functions are the same.
\end{lemma}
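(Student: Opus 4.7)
The plan is to decompose the transversal subdivision into its constituent moves, use Proposition~\ref{verticalsubdivision-prop} to handle the vertical part, and then establish a size- and label-preserving bijection between cycle complexes to handle the remaining pure-transversal move. By definition, a transversal subdivision at $(c;p,q)$ is the composition of allowable vertical subdivisions at $p$ and $q$ (which promote them to vertices) followed by adding a single new transversal $1$-cell $e_{pq}$ from $p$ to $q$ that splits the $2$-cell $c$ into two cells $c_1, c_2$. Proposition~\ref{verticalsubdivision-prop} takes care of the vertical part, so the proof reduces to the case where $p, q \in \mathfrak C^{(0)}$ and the only change to $\mathfrak C$ is the addition of $e_{pq}$.

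In this reduced setting the vertex $v$ of $D$ corresponding to $c$ is replaced in $D'$ by two vertices $v_1, v_2$ corresponding respectively to $c_1$ (which inherits the top $1$-cell $e_\top$ of $c$) and $c_2$ (whose top is $e_{pq}$), together with a new edge $\eps_0 \from v_1 \to v_2$ for the hinge through $e_{pq}$. Crucially, $v_2$ admits no incoming edges other than $\eps_0$: any hinge with $c_2$ as terminal cell must pass through $c_2$'s top $1$-cell $e_{pq}$, and the only such hinge is $\eps_0$. Consequently every incoming edge of $v$ lifts to an incoming edge of $v_1$; every outgoing edge of $v$ lifts to an outgoing edge of either $v_1$ or $v_2$ according to whether its corresponding $1$-cell lies in $\partial c_1$ or $\partial c_2$; and every self-loop at $v$ (coming from self-gluings of $c$) lifts either to a self-loop at $v_1$ or to an edge $v_2 \to v_1$.

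I will then define a bijection $\Phi \from \calC_D^G \to \calC_{D'}^G$ on simple cycles as follows: a simple cycle avoiding $v$ lifts trivially; a simple cycle through $v$ with outgoing edge in $\partial c_1$ lifts by replacing $v$ with $v_1$; a simple cycle through $v$ with outgoing edge in $\partial c_2$ lifts by replacing the visit to $v$ with a visit to $v_1$ followed by $\eps_0$ and a visit to $v_2$; a self-loop $\{\eta\}$ at $v$ that splits as $v_1 \to v_1$ lifts to the self-loop $\{\eta'\}$, while one splitting as $v_2 \to v_1$ lifts to the two-edge simple cycle through $v_1$ and $v_2$ formed by $\eps_0$ and $\eta'$. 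Extending component-wise to multi-cycles, and using vertex-disjointness to ensure at most one component meets $v$, the map $\Phi$ becomes a bijection whose inverse projects $v_1, v_2 \mapsto v$ and deletes $\eps_0$ whenever it appears; it preserves $|\sigma|$ because it maps simple cycles to simple cycles. The hard part will be verifying that $\Phi$ preserves the $G$-label $g(\sigma)$: since the inserted $\eps_0$ detour lies inside the $2$-cell $c$, and since a $2$-cell is always homeomorphic to a disc even when its boundary is self-glued, one can homotope (rel endpoints) the detour to any alternative path through $c$, which shows that the loop in $X$ represented by $\Phi(\sigma)$ is homologous to the loop represented by $\sigma$.
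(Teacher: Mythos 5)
Your proof is correct, and its overall strategy is the same as the paper's: peel off the vertical moves via Proposition~\ref{verticalsubdivision-prop}, then analyse how adding the single transversal $1$--cell changes the dual digraph, and match cycles in a way that preserves size and $G$--label (lengths may change, but the cycle function does not see them). The difference is in the reduced combinatorial picture. The paper normalizes further, to the case where $p$ and $q$ lie on different vertical $1$--cells of $\partial c$, so that the new edge separates the top $1$--cell of $c$ from its entire bottom; then every incoming edge of $v$ reattaches to $v_1'$, every outgoing edge to $v_2'$, each self-edge becomes an edge $v_2'\to v_1'$, and the cycle complexes of $D$ and $D'$ are literally identical with identical labels, with no further argument needed. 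You instead allow the upper cell $c_1$ to retain outgoing hinges, so a cycle through $v$ may lift either through $v_1$ alone or through $v_1$, $\eps_0$, $v_2$; this forces your explicit insertion/deletion bijection and the disc-homotopy argument for the labels, and both steps are sound: the fact that $\eps_0$ is the unique incoming edge of $v_2$ (hinges enter a cell only through its unique top $1$--cell) is exactly what makes the projection well defined and injectivity/surjectivity work, vertex-disjointness rules out separate components through $v_1$ and $v_2$, and homology of the two embedded cycles follows since the two arcs across $c$ are homotopic inside the image of the closed cell. This extra generality is not wasted: it is precisely the configuration that arises when transversal subdivision is used in the folding construction of Section~\ref{folding-sec}, where the added $1$--cell cuts off a triangle at a bottom corner and the upper piece keeps bottom $1$--cells, so your case analysis covers that situation more directly than the normalization stated in the paper's proof. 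The only small point to make explicit is that if $p$ or $q$ initially lies on the top $1$--cell of $c$, the preliminary vertical subdivision already splits $c$, and the rest of your argument should be read as applied to the relevant sub-cell.
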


\begin{proof}  
By first vertically subdividing $\mathfrak C$ along the forward orbits of $p$ and $q$ if necessary, 
 we may assume that $p$ and $q$ lie on different vertical $1$--cells on the boundary of $c$. 
 Let $v$ be the vertex of $D$ corresponding to $c$.  Then $D'$ is obtained from $D$ by substituting the vertex $v$ by
a pair $v_1', v_2'$ that are connected by a single edge. Each edge $\eps$ from $w\neq v$ to $v$ is replaced by 
an edge $\eps'$ from $w'$ to $v_1'$ and edge $\eps$ from $v$ to $u\neq v$ is replaced by an edge from $v_2'$ 
to $u'$. Each edge from $v$ to itself is substituted by an edge from $v_2$ to $v_1$. 
The cycle complexes  of $D$ and $D'$ are the same, and their labelings are identical. Thus the cycle function is preserved. 
\end{proof}

\subsection{Folding}\label{folding-sec} 

Let $(X,\mathfrak C,\psi)$ be a branched surface with a flow. 
Let  $c_1$ and $c_2$ be two 
cells with the property that their boundaries $\partial c_1$ and $\partial c_2$ both contain the segment 
$e_1e_2$, where $e_1$ is a vertical $1$--cell and $e_2$ is a transversal $1$--cell of $\mathfrak C$. 
Let $p$ be the initial point of $e_1$ and $q$ the end point of $e_2$.  Then $p$ and $q$ both lie on
vertical $1$--cells, and hence $(c_1;p,q)$ and $(c_2;p,q)$ define a composition of
 transversal subdivisions $\mathfrak C_1$ of $\mathfrak C$. 
For $i=1,2$, let  $e_3^i$,  be the new $1$--cell on $c_i$, 
and let  $\Delta(e_1,e_2,e_3^i)$ be the triangle $c_i$ bounded by the $1$--cells
 $e_1,e_2$ and $e_3^i$. 
 
 \begin{definition} 
The quotient map $F: X \rightarrow X'$  that identifies  $\Delta(e_1,e_2,e_3^1)$ and $\Delta(e_1,e_2,e_3^2)$
 (see Figure~\ref{folding-fig}) is called the {\it folding map} of $X$.   The quotient $X'$ is endowed with the
 structure of a branched surface $(X',\mathfrak C',\psi')$ induced by $(X,\mathfrak C_1,\psi)$.
 \end{definition}

 \begin{figure}[htbp] 
   \centering
   \includegraphics[height=1in]{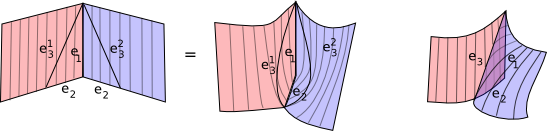} 
   \caption{The left and middle diagrams depict the two $2$--cells sharing the edges $e_1$ and $e_2$; the right diagram is the result of folding.}
   \label{folding-fig}
\end{figure}

 The following Proposition is easily verified (see Figure~\ref{folding-fig}).

\begin{proposition}
The quotient map $F$ associated to a folding
 is a homotopy equivalence, and the semi-flow $\psi: X \times \RR_+ \to X$ induces a semi-flow $\psi': X \times \RR_+ \to X$. 
\end{proposition}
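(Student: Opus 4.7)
The proposition has two parts: that $F$ is a homotopy equivalence, and that $\psi$ descends to a semi-flow $\psi'$ on $X'$. I will treat them in turn, in each case relying on the concrete geometric description of the folding as an identification of two triangles sharing the arc $e_1 \cup e_2$.

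For the homotopy equivalence, the plan is to observe that the union $\Delta(e_1,e_2,e_3^1) \cup \Delta(e_1,e_2,e_3^2)$, glued along the arc $e_1 \cup e_2$, is homeomorphic to a closed $2$-disk (a bigon with boundary $e_3^1 \cup e_3^2$), which $F$ collapses to a single triangle in $X'$. I will construct an explicit homotopy inverse $\iota\from X' \to X$ that is the identity on $X \setminus (\Delta(e_1,e_2,e_3^1) \cup \Delta(e_1,e_2,e_3^2))$ and sends the quotient triangle $\Delta \subset X'$ homeomorphically onto $\Delta(e_1,e_2,e_3^1)$ via the natural identification. Then $F \circ \iota = \mathrm{id}_{X'}$ on the nose, and $\iota \circ F$ is homotopic to $\mathrm{id}_X$ through a straight-line homotopy inside the bigon that pushes $\Delta(e_1,e_2,e_3^2)$ onto $\Delta(e_1,e_2,e_3^1)$ rel $e_1 \cup e_2$.

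For the induced semi-flow, the plan is to define $\psi'([x],t) = [\psi(x,t)]$ and verify well-definedness. Since $F$ is injective outside the bigon $\Delta(e_1,e_2,e_3^1) \cup \Delta(e_1,e_2,e_3^2)$, the only compatibility to check is for $x \in \Delta(e_1,e_2,e_3^1)$ and $y \in \Delta(e_1,e_2,e_3^2)$ with $F(x) = F(y)$. The key observation is that because $e_1$ is vertical (a flow line) and $e_2$ is transversal, and both triangles have exactly the pair $e_1,e_2$ as a portion of their boundary, the homeomorphism identifying them necessarily carries flow lines to flow lines. Thus $\psi(x,t)$ and $\psi(y,t)$ either remain in corresponding positions in the two triangles (so identified by $F$), or both have simultaneously exited through $e_1 \cup e_2$, which $F$ leaves pointwise fixed. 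The three semi-flow axioms for $\psi'$ then transfer directly from those of $\psi$: the identity at $t=0$, continuity (via the universal property of the quotient), and the composition law; the homotopy equivalence of each $\psi'(\param,t)$ follows from that of $\psi(\param,t)$ composed with the homotopy equivalence $F$ established above.

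The main obstacle is pinning down why the identifying homeomorphism between the two triangles is flow-preserving. This reduces to checking that both $c_1$ and $c_2$ have the same flow profile along $e_1 e_2$, which is forced by the top-edge condition built into Definition~\ref{branchsurface-def} together with the requirement that $e_1$ lie in a flow line and $e_2$ meet every flow transversally. Once this compatibility is spelled out, both claims become geometrically transparent, consistent with the paper's remark that the proposition is easily verified.
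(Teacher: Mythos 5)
The paper offers no argument here (it declares the proposition ``easily verified'' with a pointer to Figure~\ref{folding-fig}), so your proposal is measured on its own merits, and its first half has a genuine gap: the map $\iota\from X'\to X$ you describe does not exist as a continuous map, so the claim $F\circ\iota=\mathrm{id}_{X'}$ ``on the nose'' cannot be salvaged. The point is that folding creates branching along the image of the diagonal edge: in $X'$ the three two-dimensional sheets $\Delta'=F(\Delta(e_1,e_2,e_3^1))=F(\Delta(e_1,e_2,e_3^2))$, $R_1=c_1\setminus\Delta(e_1,e_2,e_3^1)$ and $R_2=c_2\setminus\Delta(e_1,e_2,e_3^2)$ all meet along the edge $e_3'=F(e_3^1)=F(e_3^2)$. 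Any strict section of $F$ is forced to be $F^{-1}$ on the complement of $\Delta'$ (where $F$ is injective) and, by connectedness, to send the open triangle of $\Delta'$ into one of the two triangles, say $\Delta(e_1,e_2,e_3^1)$; then a sequence in $R_2$ converging to a point of $e_3'$ has $\iota$-images converging to a point of $e_3^2$, while $\iota$ of the limit lies on $e_3^1$, so $\iota$ is discontinuous (and for the same reason your $\iota\circ F$ is discontinuous, so the ``straight-line homotopy'' is not available). A correct route is not to look for a section at all: observe that $B=\Delta(e_1,e_2,e_3^1)\cup\Delta(e_1,e_2,e_3^2)$ (glued along the arc $e_1e_2$) is a contractible subcomplex of $X$, that the fold $r\from B\to\Delta'$ is a homotopy equivalence, and that $X'$ is precisely the adjunction space $X\cup_{r}\Delta'$; since $B\hookrightarrow X$ is a cofibration, the gluing lemma for pushouts along cofibrations shows the induced map $F\from X=X\cup_B B\to X\cup_r\Delta'=X'$ is a homotopy equivalence. (Alternatively one can build an explicit homotopy inverse, but it must move a collar of $e_3'$ in $R_2$ across the fold rather than fix $R_2$ pointwise.)

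Your treatment of the induced semiflow is closer to the mark but overstates what is ``forced.'' For the formula $\psi'(F(x),t)=F(\psi(x,t))$ to be well defined you need more than that the identification of the two triangles carries flow lines to flow lines: it must be flow-equivariant in time, i.e.\ identified points must exit through the common edge $e_2$ at the same point \emph{and} at the same time, since afterwards their forward orbits must agree on the nose. This amounts to requiring that, over each point $w$ of $e_2$, the crossing time from $e_3^1$ to $w$ equals the crossing time from $e_3^2$ to $w$, which is not automatic from Definition~\ref{branchsurface-def}; it is arranged by choosing the diagonals $e_3^i$ (equivalently the identifying homeomorphism) compatibly with $\psi$, which is implicit in the construction of the fold. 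If you state that choice explicitly, the rest of your verification of the semiflow axioms for $\psi'$ goes through.
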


\begin{definition}  Given a folding map $F : X \rightarrow X'$, there is an induced branched surface structure $(X',\mathfrak C', \psi')$
on $X$ given by taking the minimal cellular structure on $X'$ for which the map $F$ is a cellular map and deleting the image of $e_2$
if there are only two hinges containing $e_2$ on $X$. 
\end{definition}

\begin{remark}
In the case that $c_1,c_2$ are the only cells above $e_2$, then folding preserves the dual digraph $D$.
\end{remark}

 \begin{figure}[htbp] 
   \centering
   \includegraphics[height=1.2in]{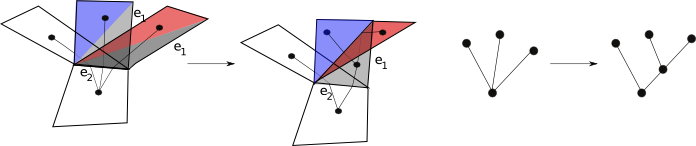} 
   \caption{Effect of folding on the digraph.}
    \label{foldingdigraph-fig}
\end{figure}

\begin{lemma}\label{folding-preserve-lem}
Let $F : X \rightarrow X'$ be a folding map, and let $(X',\mathfrak C', \psi')$ be the
induced branch surface structure of the quotient.  Then 
$$
\theta_{X,\mathfrak C,\psi} = \theta_{X',\mathfrak C',\psi'}.
$$
\end{lemma}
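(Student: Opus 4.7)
The strategy mirrors the argument used for allowable vertical subdivisions in Proposition~\ref{verticalsubdivision-prop}. By Lemma~\ref{trans-sub} applied to the transversal subdivisions at $(c_1;p,q)$ and $(c_2;p,q)$, we may replace $(X,\mathfrak C,\psi)$ by $(X,\mathfrak C_1,\psi)$ without changing the cycle function, reducing the problem to comparing $\theta_{X,\mathfrak C_1,\psi}$ with $\theta_{X',\mathfrak C',\psi'}$. Let $D$ and $D'$ denote the associated dual digraphs, let $v_1,v_2\in D$ be the vertices of the triangles $\Delta_i=\Delta(e_1,e_2,e_3^i)$, and let $v'\in D'$ be the vertex of their common image $\Delta'=F(\Delta_1)=F(\Delta_2)$. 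Since $F$ is a homotopy equivalence, the label groups $G=H_1(X;\Z)/\text{torsion}$ and $H_1(X';\Z)/\text{torsion}$ are canonically identified, and the induced quotient $q\from D\to D'$ collapsing $v_1,v_2$ to $v'$ (and acting as the identity on other vertices) preserves $G$-labels on cycles.

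The next step is to analyze the edges. The hinges containing $e_3^1$ and $e_3^2$ in $X$ are in natural bijection with those containing the folded image $e_3$ in $X'$; and for each $2$-cell $d_j$ sharing $e_2$ with the triangles, the two hinges from $v_i$ to $v_{d_j}$ ($i=1,2$) in $D$ pair up into a single hinge from $v'$ to $v_{d_j}$ in $D'$ (or disappear together with $e_2$ in the degenerate case where this would leave only one hinge across $e_2$). This yields an injection $q^*\from C_{D'}\hookrightarrow C_D$, since a simple cycle in $D'$ meeting $v'$ has a unique lift to $D$: its in-edge at $v'$ determines whether it lifts through $v_1$ or $v_2$, which in turn pins down the lifted out-edge across $e_2$. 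The map $q^*$ preserves size, length, and $G$-label.

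In analogy with Lemma~\ref{switching-lem}, the cycles in $C_D\setminus q^*(C_{D'})$ are precisely those meeting both $v_1$ and $v_2$, either as a single simple cycle passing through both, or as a pair of vertex-disjoint simple cycles (one through each). I would define an involution $\mathfrak r$ on this extra set that interchanges the two configurations by swapping the out-edges from $v_1,v_2$ and correspondingly rearranging the two path segments joining them. Since the rearrangement leaves the underlying edge set of the cycle unchanged while changing the number of simple components by one, it preserves length and $G$-label while flipping the parity of $|\sigma|$. Paired extra cycles therefore contribute with opposite signs and cancel, yielding $\theta_{X,\mathfrak C_1,\psi}=\theta_{X',\mathfrak C',\psi'}$.

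The main obstacle will be verifying the well-definedness of $\mathfrak r$ in all configurations, namely that the rearranged paths always yield genuinely vertex-disjoint simple cycles, and separately handling the degenerate case in which folding deletes $e_2$ and thereby merges $\Delta'$ with its unique downstairs neighbor in $X'$; there, a slightly modified cancellation bookkeeping is required, but the same underlying pairing principle still applies.
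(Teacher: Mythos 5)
Your overall route is the paper's: reduce to the subdivided structure $\mathfrak C_1$ via Lemma~\ref{trans-sub}, compare dual digraphs under the quotient induced by $F$, and transport $G$--labels through the homotopy equivalence. The place where the proposal goes astray is the cancellation apparatus you import from Lemma~\ref{switching-lem} --- the involution $\mathfrak r$ on cycles meeting both $v_1$ and $v_2$, whose well-definedness you yourself flag as the main outstanding obstacle. In the folding situation no such cycles exist, so there is nothing to cancel and nothing to verify. Indeed, the only transversal $1$--cell on the bottom of the triangle $\Delta(e_1,e_2,e_3^i)$ is $e_2$, so $v_i$ has exactly one outgoing edge in the dual digraph; moreover all hinges containing $e_2$ have the \emph{same} terminal cell, the unique $2$--cell swept out by the forward flow of the interior of $e_2$ (the semiflow is a map, so there is no family of cells $d_j$ below $e_2$ receiving separate hinges from each triangle, contrary to what your second paragraph suggests). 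Hence both out-edges of $v_1,v_2$ end at one vertex $w$, a single simple cycle through both $v_1$ and $v_2$ would pass through $w$ twice, and two disjoint simple cycles, one through each, would both contain $w$; neither is possible. So $C_D\setminus q^*(C_{D'})=\emptyset$, your lifting map $q^*$ is already a size- and label-preserving bijection of cycle complexes, and the equality of cycle functions follows at once. This is exactly the paper's one-line argument: when $e_2$ carries at least three hinges, $D'$ is obtained from $D$ by gluing the two edges out of $v_1,v_2$ along their terminal halves (creating the valence-$(2,1)$ vertex $v'$), a homotopy equivalence under which the labeled cycle complexes coincide.

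The degenerate case is also simpler than your ``modified cancellation bookkeeping'' suggests. If $c_1,c_2$ are the only cells above $e_2$, the induced structure deletes the image of $e_2$ and $\Delta'$ merges with the cell below it; comparing with the unsubdivided $\mathfrak C$ the dual digraph is literally unchanged (this is the paper's remark following the definition of the induced structure), while comparing with $\mathfrak C_1$ the quotient contracts the wedge $v_1\to w\leftarrow v_2$ to a point and the same unique-lift argument gives a label- and size-preserving bijection of cycle complexes --- again no cancellation. In short, your argument closes once you notice the extra set is empty; but as written the deferred verification of $\mathfrak r$ sits precisely where the analogy with the vertical-subdivision case breaks down: there the two lifts $v_1',v_2'$ of the subdivided cell have independent outgoing edges and genuinely support cycles through both, which is why the pairing and parity argument is needed in Proposition~\ref{verticalsubdivision-prop} and not here.
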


\begin{proof}
Let $D$ be the dual digraph of $(X,\mathfrak C,\psi)$ and $D'$ the dual digraph of $(X',\mathfrak C',\psi')$.
Assume that there are at least three hinges containing $e_2$.
Then $D'$ is obtained from $D$ by gluing 
two adjacent half edges (see Figure~\ref{foldingdigraph-fig}), a homotopy equivalence. 
Thus, $\C_\D^G = \C_{\D'}^G$, and  the cycle polynomials are equal.
\end{proof}

\section{Branched surfaces associated to a free group automorphism}\label{mappingtori_and_branchedsurfaces-sec}

Throughout this section, let $\phi \in \Out(F_n)$ be an element that can be represented
by an expanding irreducible train-track map $f : \tau \rightarrow \tau$.    
Let $\Gamma = F_n \rtimes_\phi \ZZ$, and $G= \Gamma^{\text{ab}}/\text{torsion}$. 
We shall define the mapping torus $(Y_f,\mathfrak C,\psi)$ associated $f$, and prove that its cycle polynomial $\theta_{Y_f,\mathfrak C,\psi}$ has a distinguished factor $\Theta$ with a distinguished McMullen cone $\calT$.
We show that the logarithm of the
house of $\Theta$ specialized at integral elements in the cone extends to a homogeneous of degree -1, real analytic concave function $L$
on an open cone in $\Hom(G,\RR)$, and satisfies a universality property.  

\subsection{Free group automorphisms and train-tracks maps}\label{freegroupautomorphisms-sec}
In this section we give some background definitions for free group automorphisms, 
and their associated train-tracks following \cite{DKL}.  We also recall some sufficient conditions
for a free group automorphism to have an expanding irreducible train-track map due
to work of Bestvina-Handel \cite{BH}.

\begin{definition}\label{graph-def}  A {\it topological graph} is a finite 1-dimensional cellular complex.
For each edge $e$, an orientation on $e$ determines an {\it initial} and {\it terminal} point of $e$.
Given an oriented edge $e$, we denote by $\overline e$, the edge $e$ with opposite orientation.
Thus the initial and terminal points of $e$ are respectively the terminal and initial points of $\overline e$.
An {\it edge path} on a graph is an ordered sequence of edges $e_1\cdots e_\ell$, where the endpoint of
$e_i$ is the initial point of $e_{i+1}$, for $i=1,\dots,\ell-1$.  The edge path has {\it back-tracking} if
$e_i = \overline{e_{i+1}}$ for some $i$.  The length of an edge path $e_1\cdots e_\ell$
is $\ell$.
\end{definition}

\begin{definition}\label{graphmap-def}
A {\it graph map} $f: \tau \to \tau$ is a continuous map from a  graph $\tau$ to itself that
sends vertices to vertices, and is a local embedding on edges.  A graph map assigns to each
edge $e \in \tau$ an edge path $f(e) = e_1\cdots e_\ell$ with no back tracks.  Identify 
the fundamental group $\pi_1(\tau)$ with a free group $F_n$.
A graph map $f$ {\it represents} an element
 $\phi \in \Out(F_n)$ if $\phi$ is conjugate to $f_*$ as an element of $\Out(F_n)$. 
\end{definition}

\begin{remark} In many definitions of graph map one is also allowed to collapse an edge, but for
this exposition, graph maps send edges to non-constant edge-paths.
\end{remark}

\begin{definition}\label{traintrack-def} 
 A graph map $f \from \tau \rightarrow \tau$ is a {\it train-track map} if
 \begin{enumerate}[(i)]
 \item $f$ is a homotopy equivalence, and
 \item  $f^k$ has no {\it back-tracking} for all $k \ge 1$, that is, for any $k \ge 1$,
and edge $e$, $f^k(e)$
is an edge path with no back-tracking.
\end{enumerate}
\end{definition}

\begin{definition} Given a train-track map $f \from \tau \to \tau$, let  $\{e_1,\dots,e_k\}$ be an
ordering of the edges of $\tau$, and let $D_f$ be the digraph
whose vertices $v_e$ correspond to the undirected edges $e$ of $\tau$, and whose edges  from $e_i$ to $e_j$ 
correspond to each appearance of $e_j$ and $\overline{e_j}$ in the edgepath $f(e_i)$.   The {\it transition matrix} $M_f$
 of $D_f$ is the directed adjacency matrix
 $$
 M_f  = [a_{i,j}],
 $$
 where $a_{i,j}$ is equal to the number of edges from $v_{e_i}$ to $v_{e_j}$.
 \end{definition}
 
 \begin{definition} \label{dil-def} If $f : \tau \rightarrow \tau$ be a  train-track map, the {\it dilatation}
of $f$ is given by the spectral radius of $M_f$
$$
\lambda(f) = \max\{|\mu| \ | \ \mbox{$\mu$ is an eigenvalue of $M_f$}\}.
$$
\end{definition}
 
 \begin{definition}
A train-track map $f : \tau \rightarrow \tau$ is {\it irreducible} if its transition matrix $M_f$ is irreducible, it
 is {\it expanding} if the lengths of edges of $\tau$ under iterations of $f$ are unbounded.
\end{definition}

\begin{remark} A Perron-Frobenius matrix is irreducible and expanding, but the converse is not necessarily true. 
\end{remark}

\begin{example}\label{irrNotPFexample} Let $\tau$ be the rose with four petals $a,b,c$ and $d$. 
Let $f : \tau  \rightarrow \tau$ be the train-track map associated to
the free group automorphism
\begin{equation}\label{myphi}
\begin{array}{rcl}
a &\mapsto& cdc\\
b &\mapsto& cd\\
c &\mapsto& aba\\
d &\mapsto& ab
\end{array}
\end{equation}
The train-track map $f$ has transition 
matrix
$$
M_f = \left [
\begin{array}{cccc}
0 & 0 & 2 & 1\\
0 & 0 & 1 & 1\\
2 & 1 & 0 & 0\\
1 & 1 & 0 & 0\\
\end{array}
\right ],
$$
which is an irreducible matrix, and hence $f$ is irreducible.  The train-track map is expanding, since its 
square is block diagonal, where each
block is a $2 \times 2$ Perron-Frobenius matrix.   On the other hand, $f$ is clearly not PF, since no power
of $M_f$ is positive.
\end{example}

\begin{definition}\label{wordlength-def}  Fix a generating set $\Omega = \{ \omega_1,\dots,\omega_n\}$ of $F_n$.
Then each $\gamma \in F_n$ can be written as a {\it word} in $\Omega$, 
\begin{eqnarray}\label{gamma-word-eqn}
\gamma = \omega_{i_1}^{r_1} \cdots \omega_{i_\ell}^{r_\ell}
\end{eqnarray}
where $\omega_{i_1}, \dots, \omega_{i_\ell} \in \Omega$ and $r_j \in \{1,-1\}$.
This representation is {\it reduced} if there are no cancelations, that is
 $\omega_{i_j}^{r_j} \neq \omega_{i_{j+1}}^{-r_{j+1}}$ for $j=1,\dots,\ell-1$.
 The word length $\ell_\Omega(\gamma)$ is the length $\ell$ of a reduced 
 word representing $\gamma$ in $F_n$.
The {\it cyclically reduced word length} $\ell_{\Omega,\mbox{\tiny cyc}}(\gamma)$
of $\gamma$ represented by the word in  (\ref{gamma-word-eqn})
is  the minimum word length of the elements
$$
\gamma_j = \omega_{i_j}^{r_j} \omega_{i_{j+1}}^{r_{j+1}} \cdots \omega_{i_\ell}^{r_\ell} \omega_{i_1}^{r_1} \cdots \omega_{i_{j-1}}^{r_{j-1}},
$$
for $j= 1,\dots,\ell-1$.
\end{definition}

\begin{proposition}\label{freedil-prop} Let $\phi \in \Out(F_n)$ be represented by an
expanding irreducible train-track map $f$,  and  let $\gamma \in F_n$ be
a nontrivial element.  Then either $\phi$ acts periodically on the  conjugacy class of $\gamma$ in $F_n$,
or the growth rate satisfies
$$
\lambda_{\Omega, \mbox{\tiny cyc}}(\gamma) =  \lim_{k} \ell_{\Omega,\mbox{\tiny cyc}}(\phi^k(\gamma))^\frac{1}{k} = \lambda(f),
$$
 and in particular, it is independent of the
choice of generators, and of $\gamma$.
\end{proposition}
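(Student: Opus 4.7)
The plan is to convert the statement into an edge-counting problem on the train-track graph $\tau$, and then extract the growth rate from the spectral behavior of the transition matrix $M_f$ by way of Theorem~\ref{EX-thm}.

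First, I would identify $F_n$ with $\pi_1(\tau)$ (via a spanning tree $T\subset \tau$) and represent the conjugacy class of any $\beta\in F_n$ by its unique tight immersed loop $\beta_\tau$ on $\tau$. With the tree-generating set, $\ell_{\Omega,\mathrm{cyc}}(\beta)$ is precisely the number of non-tree edges traversed by $\beta_\tau$, which is bi-Lipschitz equivalent to the total edge count $|\beta_\tau|$. Since any two finite generating sets of $F_n$ yield bi-Lipschitz word metrics, the growth rate $\lim_k \ell_{\Omega,\mathrm{cyc}}(\phi^k\gamma)^{1/k}$ is independent of $\Omega$ and, provided it exists, equals $\lim_k |(\phi^k\gamma)_\tau|^{1/k}$.

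Next, I would use the train-track hypothesis. By Definition~\ref{traintrack-def}, no iterate $f^k$ produces back-tracking, so iterating along the tight loop $\gamma_\tau$ creates no cancellation at vertices and $f^k(\gamma_\tau) = (\phi^k\gamma)_\tau$ is already tight. Letting $\vec v(\gamma_\tau)\in \mathbb{Z}_{\ge 0}^{|E(\tau)|}$ record the edge-occurrence vector of $\gamma_\tau$, one has
\[
|f^k(\gamma_\tau)| \;=\; \mathbf{1}^{T} M_f^k\, \vec v(\gamma_\tau),
\]
which is a purely linear-algebraic quantity. Non-triviality of $\gamma$ ensures $\vec v(\gamma_\tau)\ne 0$.

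Finally, since $f$ is irreducible and expanding, standard Perron--Frobenius theory combined with Theorem~\ref{EX-thm} yields a simple leading eigenvalue $\lambda(f)>1$ with strictly positive left Perron eigenvector $u^{T}$. For any non-zero non-negative vector $v$ we have $\langle u, v\rangle > 0$, and hence $\mathbf{1}^{T} M_f^k v$ grows like $\lambda(f)^k$, giving the limit $\lambda(f)$. This simultaneously dispatches the periodic alternative in the statement: periodic action would force $|f^k(\gamma_\tau)|$ to remain bounded, contradicting exponential growth with base $\lambda(f)>1$. The main obstacle is bridging the gap between the $\limsup$ formulation of Theorem~\ref{EX-thm} and the honest limit required here; I would handle it by passing to a primitive power $M_f^d$ (where $d$ is the period of the irreducible matrix $M_f$), splitting it into a block sum of primitive matrices on the cyclic components, and analyzing each residue class modulo $d$ separately to conclude that $|f^{md+r}(\gamma_\tau)|$ is asymptotic to a positive constant times $\lambda(f)^{md+r}$, which then forces the full sequence of $k$-th roots to converge to $\lambda(f)$.
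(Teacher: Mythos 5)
Your reduction to edge-counting, the independence from the generating set, and the period-splitting argument for upgrading the $\limsup$ of Theorem~\ref{EX-thm} to an honest limit are all fine, but the central step is wrong: the train-track condition (Definition~\ref{traintrack-def}) only guarantees that $f^k$ has no back-tracking on \emph{edges} (equivalently on legal paths); it does \emph{not} imply that applying $f^k$ to an arbitrary tight circuit $\gamma_\tau$ produces a tight circuit. A tight loop may cross illegal turns, and at those turns cancellation does occur after tightening, so the identity $\ell_{\Omega,\mathrm{cyc}}(\phi^k\gamma)\asymp \mathbf{1}^{T}M_f^k\,\vec v(\gamma_\tau)$ fails: the right-hand side is the \emph{unreduced} length of the image, which can vastly overestimate the cyclically reduced length. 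The clearest symptom is that your argument ``dispatches'' the periodic alternative, i.e.\ it proves that every nontrivial conjugacy class grows at rate $\lambda(f)$ --- but that is false, and the dichotomy in the statement is there for a reason. For instance, an expanding irreducible train-track map representing the automorphism induced by a pseudo-Anosov homeomorphism of a punctured surface fixes the conjugacy class of the peripheral loop; for that $\gamma$ the vector count $\mathbf{1}^{T}M_f^k\vec v(\gamma_\tau)$ still grows like $\lambda(f)^k$, yet all of this growth is cancelled upon reduction. An argument that rules this out must contain an error, and the error is exactly the ``no cancellation at vertices'' claim.

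The missing ingredient is the Bounded Cancellation Lemma (Cooper; Bestvina--Handel): for a train-track map there is a constant $C$ such that tightening $f(\rho_1\rho_2)$ for tight $\rho_1,\rho_2$ cancels at most $C$ edges at the junction. One then decomposes the circuit $\gamma_\tau$ into maximal legal segments, observes that the number of illegal turns of the tightened image does not increase under iteration, and concludes that either every legal segment remains of bounded length under iteration --- in which case finiteness forces $\phi$ to act periodically on the conjugacy class --- or some legal segment grows like $\lambda(f)^k$, in which case bounded cancellation (at a uniformly bounded number of illegal turns) shows the reduced circuit length also grows with base exactly $\lambda(f)$, the upper bound coming from your unreduced estimate. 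This is precisely the content of Remark~1.8 of \cite{BH}, which is all the paper itself invokes for Proposition~\ref{freedil-prop}; your linear-algebra endgame can be kept, but it must be applied to legal segments after this cancellation control, not to the raw circuit.
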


\begin{proof} See, for example, Remark 1.8 in \cite{BH}.
\end{proof}

In light of Proposition~\ref{freedil-prop}, we make the following definition.

\begin{definition}\label{freedil-def} Let $\phi \in \Out(F_n)$ be an element that is
represented by an expanding irreducible train-track map $f$.
Then we define the {\it dilatation} of $\phi$ to be
$$
\lambda(\phi) = \lambda(f).
$$
\end{definition}

\begin{remark}\label{hyperbolic-def} An element $\phi \in \Out(F_n)$ is {\it hyperbolic} if
$F_n \rtimes_\phi \Z$ is word-hyperbolic.  It is  {\it atoroidal} 
 if there are no periodic conjugacy classes of elements of $F_n$ under iterations of $\phi$.
By a result of Brinkmann \cite{Brinkmann} (see also \cite{BF92}),  $\phi$ is hyperbolic if and only if $\phi$ is atoroidal.
\end{remark}

\begin{definition}\label{fully-irreducible-def} An automorphism $\phi \in \Out(F_n)$ is {\it reducible} if  $\phi$ leaves the
conjugacy class  of a proper free factor in $F_n$ fixed.
If $\phi$ is not reducible it is called {\it irreducible}.   If $\phi^k$ is irreducible for all $k \ge 1$, then
 $\phi$ is {\it fully irreducible}.
 \end{definition}

\begin{theorem}[Bestvina-Handel \cite{BH}] \label{freeautodil-thm}  If $\phi \in \Out(F_n)$ is irreducible, then
$\phi$ can be represented by an irreducible train track map, and
if $\phi$ is
fully irreducible, then it can be represented by a PF train track map.
\end{theorem}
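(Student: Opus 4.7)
The plan is to follow the Bestvina--Handel algorithm: start with any graph map representing $\phi$ and iteratively simplify it until it becomes a train-track map, using irreducibility of $\phi$ to control what can go wrong. First I would fix a graph $\tau$ with no valence-one vertices and $\pi_1(\tau) \cong F_n$, and a graph map $g \from \tau \to \tau$ representing $\phi$ (this exists by standard Stallings theory). After collapsing an invariant forest if necessary, one arranges that $g$ is a homotopy equivalence and the transition matrix $M_g$ is irreducible; irreducibility of $\phi$ blocks the appearance of a nontrivial proper $g$-invariant subgraph (which would pull back to an invariant free factor system).

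Next I would introduce a complexity, namely the Perron--Frobenius eigenvalue $\lambda(M_g)$, broken ties with the number of gates or illegal turns. An \emph{illegal turn} at a vertex $v$ is a pair of oriented edges $(e,e')$ emanating from $v$ such that $g^k$ identifies their initial segments for some $k \geq 1$. If $g$ has no illegal turns, then by definition no $g^k$ back-tracks, so $g$ is already a train-track map. Otherwise, I would perform one of the Bestvina--Handel moves --- folding the two germs of an illegal turn (after possibly subdividing), collapsing a pre-trivial forest, or doing a valence-one/valence-two homotopy --- to produce a new graph map $g' \from \tau' \to \tau'$ representing $\phi$. The central calculation, done in \cite{BH}, is that each such move either strictly decreases $\lambda$ or preserves $\lambda$ while strictly decreasing a secondary invariant, and that the resulting transition matrix remains irreducible (using that $\phi$ is irreducible to reinstate irreducibility when a fold produces a new invariant subgraph).

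Termination then follows from the fact that $\lambda$ takes values in the algebraic integers bounded below by $1$ of bounded degree (determined by the rank), and the secondary invariants are nonnegative integers. The terminal map $f \from \tau \to \tau$ is irreducible (its transition matrix $M_f$ is irreducible) and is a train-track map. Expansion is then automatic on the edges contributing to the dominant eigenvector since $\lambda(f) > 1$ --- and $\lambda(f) = 1$ would force $\phi$ to be finite order up to a periodic Nielsen path, contradicting irreducibility in rank $n \geq 2$.

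For the fully irreducible case, the main point is to upgrade irreducibility of $M_f$ to primitivity. Assume $M_f$ is irreducible but imprimitive; then by Perron--Frobenius theory the vertices of the underlying digraph partition into cyclic classes permuted by $f$. This partition is $f$-invariant and gives rise, via the edges, to a nontrivial proper $f^k$-invariant subgraph for some $k$, hence to a $\phi^k$-invariant proper free factor system, contradicting full irreducibility. Therefore $M_f$ is Perron--Frobenius. The hard part throughout is the bookkeeping in step two: guaranteeing that the finite list of moves simultaneously (i) respects the outer automorphism class, (ii) preserves irreducibility of the transition matrix using irreducibility of $\phi$, and (iii) strictly decreases the lexicographic complexity; this is exactly the content of the Bestvina--Handel algorithm, and I would cite \cite{BH} rather than reproduce it.
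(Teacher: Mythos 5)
The paper itself does not prove this theorem; it is quoted from Bestvina--Handel \cite{BH} without argument, so the only ``approach of the paper'' is the citation, and your overall plan (sketch the Bestvina--Handel complexity-reduction algorithm and defer the bookkeeping to \cite{BH}) is consistent with that. However, two of the steps you argue yourself are not right as written. Your stopping criterion mischaracterizes the train-track condition: a map is a train-track map when iterated images of edges never back-track, i.e.\ edge images are legal paths; it is emphatically not required that there be no illegal turns at all, and typical expanding irreducible train-track maps (including the examples in this paper) do have illegal turns. So a loop of the form ``fold as long as some illegal turn exists'' is not the Bestvina--Handel algorithm and would not terminate at the desired representative; the moves are applied only when some $f^k(e)$ back-tracks. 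Also, the sentence about expansion is both unnecessary (the statement asserts only irreducibility, not expansion, of the train-track map) and false: $\lambda(f)=1$ does not contradict irreducibility, since finite-order irreducible outer automorphisms exist, e.g.\ $a\mapsto b$, $b\mapsto (ab)^{-1}$ in $\Out(F_2)$.

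The genuine gap is in the one part you do not defer to \cite{BH}: upgrading irreducibility of $M_f$ to primitivity when $\phi$ is fully irreducible. If $M_f$ is irreducible of period $p>1$, the edges do fall into cyclic classes, and the subgraph $G_i$ spanned by one class is a proper $f^p$-invariant subgraph. But to contradict full irreducibility you need $G_i$ to carry a \emph{nontrivial and proper} free factor system, and a priori it need not: $G_i$ could be a forest (so it carries no free factor at all), or it could consist of a noncontractible component whose fundamental group is all of $F_n$ together with some tree components (so the carried system is improper); moreover, even in the good case where $G_i$ has several noncontractible components, $\phi^p$ may only permute the corresponding factors, so one must pass to a further power before invoking the definition of (full) irreducibility. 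These degenerate cases have to be excluded --- for instance by normalizing the representative to have no valence-one vertices and no invariant forests and running an Euler-characteristic/component count --- or else this half of the statement should, like the first half, simply be attributed to \cite{BH}.
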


\begin{remark}
Theorem \ref{main-thm} deals with an automorphism $\phi$ that can be represented by an irreducible 
and expanding train-track map. It does not follow that for such an automorphism every train-track representative 
is expanding and irreducible. For example, consider the automorphism $\phi$ from Example 
\ref{irrNotPFexample}. Let $\tau'$ be a graph constructed from an edge $e$ with two distinct endpoints 
$v$ and $w$ by attaching at $v$ two loops labeled $a$ and $b$ and attaching at $w$ two loops $c$ and $d$. The map $f' \from \tau' \to \tau'$ defined by equation \ref{myphi} and $e \mapsto \bar e$ represents the same automorphism $\phi$ as in Example \ref{irrNotPFexample}. However, since $e$ is invariant, the map is not irreducible and not expanding. \\
If we assume that $\phi$ is fully irreducible, then all train-track representatives are expanding. Indeed, let $f' \from \tau' \to \tau'$ be a train-track representative of $\phi$. Then $f'$ is irreducible because an invariant subgraph will produce a $\phi$-invariant free factor. It is now enough to show that some edge is expanding. Let $\al$ be an embedded loop in $\tau'$. We can think of $\al$ as a conjugacy class in $F_n$. Then by proposition  \ref{freedil-prop} either $\al$ is periodic or $\al$ grows exponentially. However, $\al$ cannot be periodic since $\al$ represents a free factor  of $F_n$. Therefore, $\al$ grows exponentially, hence some edge grows exponentially and because $f'$ is irreducible, all edges grow exponentially. 
\end{remark}

\subsection{The mapping torus of a train-track map.}\label{mappingtorus-sec}
In this section we define the branched surface $(X_f,\mathfrak C_f,\psi_f)$ associated to
 an irreducible expanding train-track map $f$.
 
\begin{definition}\label{mappingtorus-def} The {\it mapping torus}  $(Y_f,\psi_{f})$
associated to $f: \tau \rightarrow \tau$ is the branched surface where
 $Y_f$ is the quotient of  $\tau \times [0,1]$ by the 
identification $(t,1) \sim (f(t),0)$, and $\psi_f$ is the semi-flow induced by
the product structure of $\tau \times [0,1]$.  
Write 
$$
q : \tau \times [0,1] \rightarrow Y_f
$$
 for the quotient map.
The map to the circle induced by projecting $\tau \times [0,1]$
to the second coordinate induces a map $\rho : Y_f \rightarrow S^1$.
\end{definition}

\begin{definition}\label{MTcelldecomp-def}
The $\psi_f$-compatible cellular decomposition $\mathfrak C_f$ for $Y_f$ 
is defined as follows.
For each edge $e$, let $v_e$ be the initial
vertex of $e$ (the edges $e$ are oriented by the orientation on $\tau$).  The $0$--cells 
of $\mathfrak C_f$ are $q(v_e \times \{0\})$,
 the $1$--cells are of the form
 $s_e = q(v_e \times [0,1])$ or $t_e = q(e \times \{0\})$, and the $2$--cells are $c_e = q(e \times [0,1])$, where
 $e$ ranges over the oriented edges of $\tau$. 
 For this cellular decomposition of $Y_f$, the collection $\calV$ of $s_e$ is  the set of {\it vertical} $1$--cells 
 and the collection $\calE$ of $1$--cells $t_e$ is the set of {\it horizontal} $1$--cells.
  \end{definition}
 
 By this definition $(Y_f,\mathfrak C_f, \psi_f)$ is a branched surface.  Let $\theta_{Y_f, \mathfrak C_f,\psi_f}$
 be the associated cycle function (Definition~\ref{cyclefunction-def}).
 
 \begin{figure}[htbp] 
   \centering
   \includegraphics[height=1.2in]{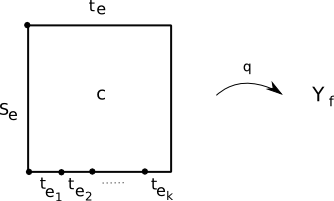} 
   \caption{A cell of the mapping torus of a train-track map.}
   \label{fig:mappingtorus}
\end{figure}

\begin{proposition}
The digraph $D_f$ for the train-track map $f$ and the dual digraph
of $(Y_f,\mathfrak C_f, \psi_f)$ are the same, and we have
$$
\lambda(\phi) = |\theta_{Y_f, \mathfrak C_f,\psi_f}^{(\alpha)}|,
$$
where $\alpha : \Gamma \rightarrow \Z$ is the projection associated to $\phi$.
\end{proposition}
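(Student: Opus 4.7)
The plan is to establish the two assertions---the identification of the dual digraph with $D_f$, and the resulting equality of houses---by a direct comparison of definitions.

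For the first assertion, I would match vertices and edges of the two digraphs. The $2$--cells of $\mathfrak{C}_f$ are indexed by the unoriented edges of $\tau$: each unoriented edge $e$ gives the $2$--cell $c_e = q(e\times[0,1]) = c_{\overline{e}}$, and these are the vertices of the dual digraph, in bijection with the vertices of $D_f$. Next, for a transversal $1$--cell $t_{e_i} = q(e_i \times \{0\})$, the $2$--cell into which the forward flow from $t_{e_i}$ enters is $c_{e_i}$, while the $2$--cells whose forward flow terminates at $t_{e_i}$ are those $c_{e'}$ for which $f(e')$ contains $e_i$ or $\overline{e_i}$ as one of its letters. The set of hinges at $t_{e_i}$ going from $c_{e'}$ to $c_{e_i}$ is thus in bijection with the occurrences of $e_i$ or $\overline{e_i}$ in the edge-path $f(e')$. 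Summing over transversal edges, the multiplicity of dual-digraph edges from $v_{c_{e'}}$ to $v_{c_{e_i}}$ matches the $(e',e_i)$-entry of the transition matrix $M_f$, giving an isomorphism with $D_f$.

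For the second assertion, I would identify $\theta_{Y_f,\mathfrak{C}_f,\psi_f}^{(\alpha)}(x)$ with the plain cycle polynomial $\theta_{D_f}(x)$ from Section~\ref{plain-digraph-sec}. The key point is that for each edge $\varepsilon$ of the dual digraph (a hinge), the embedded representative in $Y_f$ crosses one fiber of the mapping-torus fibration $\rho : Y_f \to S^1$: starting at height $s = 1/2$ inside the initial $2$--cell, passing through a transversal edge at $s = 1 \sim 0$, and ending at $s = 1/2$ inside the terminal $2$--cell. Its homology class projects under $\rho_* = \alpha$ to $1$, so $\alpha(g(\sigma)) = \ell(\sigma)$ for every cycle $\sigma \in \calC_D^G$. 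The specialization therefore reads
\[
\theta_{Y_f,\mathfrak{C}_f,\psi_f}^{(\alpha)}(x) \;=\; 1 + \sum_{\sigma \in C_{D_f}} (-1)^{|\sigma|} x^{-\ell(\sigma)} \;=\; \theta_{D_f}(x).
\]

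Finally, by Theorem~\ref{coeff-thm}, $x^m \theta_{D_f}(x) = P_{D_f}(x)$ is the characteristic polynomial of $M_f$, where $m$ is the number of unoriented edges of $\tau$. Since $f$ is expanding and irreducible, Theorem~\ref{EX-thm} shows that $M_f$ has a simple spectral-radius eigenvalue equal to $\lambda(f) = \lambda(\phi) > 1$; multiplying by $x^m$ only adds roots at $0$, so the house of $\theta_{D_f}$---the maximum modulus of its nonzero roots---equals $\lambda(\phi)$. The main obstacle is keeping track of the orientation conventions, namely the identification of $c_e$ with $c_{\overline{e}}$ when counting hinges and the sign of the winding of each hinge edge around the base $S^1$, so that the multiplicities and homology computations line up correctly; once those are settled, the result follows by direct comparison of definitions.
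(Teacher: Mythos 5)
Your proposal is correct and follows essentially the same route as the paper: identify the $2$--cells of $(Y_f,\mathfrak C_f,\psi_f)$ with the edges of $\tau$ and the hinges with the occurrences of edges in the edge-paths $f(e)$, observe that each dual-digraph edge has $\alpha$-value $1$ so the specialization of the cycle function is $\theta_{D_f}(x)$, and then apply the coefficient theorem together with the spectral radius of $M_f$ to get $\lambda(\phi)=|\theta^{(\alpha)}_{Y_f,\mathfrak C_f,\psi_f}|$. Your treatment of the orientation identification $c_e=c_{\overline e}$ and of the winding of each hinge around $S^1$ just makes explicit what the paper leaves as ``one can check.''
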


\begin{proof}
Each $2$--cell $c$ of $(Y_f,\mathfrak C_f, \psi_f)$ is the quotient of one 
 drawn as in Figure~\ref{fig:mappingtorus}, and hence there is a one-to-one correspondence
 between $2$--cells and edges of $\tau$.   One can check that for each
time $f(e)$ passes over the edge $e_i$, there is a corresponding hinge between the cell $q(e \times [0,1])$ and the
cell $q(e_i \times [0,1])$.  This gives a one-to-one correspondence between the directed edges of $D_f$
and the edges of the dual digraph.

Recall that $\lambda(\phi) = \lambda(f)$ is the spectral radius of $M_f$
(Definition~\ref{freedil-def}).  By Theorem~\ref{coeff-thm},
the characteristic polynomial of $D_f$ satisfies
$$
P_{D_f} (x) = x^m \theta_{D_f}(x).
$$
Each edge of $D_f$ has length one with respect to the map $\alpha$, and hence for each cycle $\sigma \in C_{D_f}$,
the number of edges in $\sigma$ equals $\ell_{\alpha}(\sigma)$.
It follows that $\theta_{D_f}(x)$ is the specialization by $\alpha$ of the cycle function $\theta_{Y_f,\mathfrak C_f,\psi_f}$, and
we have
\[
\lambda_{\mbox{PF}}(D_f) = |P_{D_f}| = |\theta_{D_f}| = |\theta_{Y_f,\mathfrak C_f,\psi_f}^{(\alpha)}|. \qedhere
\]
\end{proof}

In the following sections, we study the behavior of $|\theta_{Y_f,\mathfrak C_f,\psi_f}^{(\alpha)}|$ as we let $\alpha$
vary.

\subsection{Application of McMullen's theorem to cycle polynomials}

Fix a train-track map $f : \tau \rightarrow \tau$.
Recall that  $\theta_f = \theta_{Y_f,\mathfrak C_f,\pi_f} = 1 + \sum_{\sigma \in \calC_{D_f}^G} (-1)^{|\sigma|} g(\sigma)^{-1}$.  
Thus 
the McMullen cone $\calT_{\theta_f}(1)$
is  given by
\begin{eqnarray*}
\calT_{\theta_f} (1) &=& \{ \alpha \in \Hom(G;\R)\ | \ \alpha (g) > 0, \ \mbox{for all $g \in \Supp(\theta)$}\}\\
&=& \{ \alpha \in \Hom(G;\R)\ | \ \alpha(g) > 0,\ \mbox{for all $g\in G$ such that $a_g \neq 0$}\}.
\end{eqnarray*}
(see Definition~\ref{McMullen_cone-def}).   We write $\calT_f = \calT_{\theta_f}(1)$ for simplicity
when the choice of cone associated to $\theta_f$ is understood.

\begin{proposition}\label{delta-prop} Let $\calT_f$ be the McMullen cone for $\theta_f$.
The map 
$$
\delta : \Hom(G;\R)  \rightarrow \R 
$$
defined by
$$
\delta(\alpha) =  \log |\Theta^{(\alpha)}|,
$$
extends to a homogeneous of degree $-1$, real analytic, convex function on $\calT_f$ that goes to
infinity toward the boundary of affine planar sections of $\calT_f$.
Furthermore, $\theta_f$ has a factor $\Theta$ with the properties:
\begin{enumerate}
\item  for all $\alpha \in \calT_f$,
$$
 |\theta_f^{(\alpha)}| = |\Theta^{(\alpha)}|,
$$
 and
\item {\it minimality}: if  $\theta \in \Z G$ satisfies $|\theta^{(\alpha)}| = |\theta_f^{(\alpha)}|$ for
all $\alpha$ ranging among the integer points of an open subcone of $\calT_f$,
then $\Theta$ divides $\theta$.
\end{enumerate}
\end{proposition}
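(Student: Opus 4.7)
The plan is to reduce Proposition~\ref{delta-prop} directly to Theorem~\ref{McM-thm} and Proposition~\ref{distinguishedFactor}: exhibit the dual digraph $D_f$ of the mapping torus as an expanding $H$--labeled digraph whose associated characteristic polynomial $P_{D_f}$ agrees with $\theta_f$ up to multiplication by a unit in $\Z G$, and then transfer every conclusion of those two black-box results from $P_{D_f}$ to $\theta_f$.

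First I would split $G = H\oplus\langle s\rangle$ using the surjection $\alpha_\phi\from G\to\Z$, setting $H = \ker(\alpha_\phi)$ and choosing any lift $s\in G$ of $1\in\Z$. Each directed edge of $D_f$ corresponds to a hinge advancing the semiflow by one time step, so its $G$--label has the form $h\cdot s$ with $h\in H$; declaring $h(\varepsilon) = h$ gives an $H$--labeling. The Coefficient Theorem~\ref{CTlabeled-thm} then gives
\[
P_{D_f}(u) \;=\; u^{m}\,\theta_{D_f^{H}}(u) \qquad \text{in } \Z H[u],
\]
and under the canonical isomorphism $\Z H[u,u^{-1}]\cong \Z G$ sending $u\mapsto s$, one checks that $\theta_{D_f^{H}}(s) = \theta_f$ termwise, so $P_{D_f}(s) = s^{m}\theta_f$ in $\Z G$. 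Expandingness of $D_f$ is immediate because its adjacency matrix is the transition matrix $M_f$, which is irreducible with spectral radius $\lambda(f)>1$ by hypothesis.

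Next I would apply Theorem~\ref{McM-thm} to $D_f^H$: the function $\alpha\mapsto\log|P_{D_f}^{(\alpha)}|$ is homogeneous of degree $-1$, real analytic, convex on $\calT_{P_{D_f}}(u^m)$, and blows up on the boundary of every affine planar section. Multiplication by the unit $s^m$ translates the support of $\theta_f$ rigidly, so $\calT_{P_{D_f}}(u^m) = \calT_{\theta_f}(1) = \calT_f$; the identity $P_{D_f}^{(\alpha)}(x) = x^{m\alpha(s)}\theta_f^{(\alpha)}(x)$ shows the nonzero roots of the two specializations coincide, so $|P_{D_f}^{(\alpha)}| = |\theta_f^{(\alpha)}|$. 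Applying Proposition~\ref{distinguishedFactor} to $P_{D_f}$ produces a distinguished factor $Q$, and since $s$ is a unit in the UFD $\Z G$, after absorbing any power of $s$ in $Q$ into a unit we obtain a factor $\Theta$ of $\theta_f$ that inherits both $|\Theta^{(\alpha)}| = |\theta_f^{(\alpha)}|$ on $\calT_f$ and the minimality property verbatim. The claims for $\delta(\alpha) = \log|\Theta^{(\alpha)}|$ then follow from the chain $\log|\Theta^{(\alpha)}| = \log|\theta_f^{(\alpha)}| = \log|P_{D_f}^{(\alpha)}|$.

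The main obstacle is the bookkeeping translation between two related but distinct frameworks: the labeled-digraph construction of Section~\ref{labeled-digraphs-sec} uses an auxiliary formal variable $u$ to track cycle length separately from the $H$--labels, whereas the cycle function of a branched surface in Section~\ref{semiflow-sec} absorbs the flow direction into its $G$--labels. The one substantive step is to confirm that this unit twist preserves the McMullen cone, the house of every specialization, and the divisibility structure in $\Z G$; once that is done, everything asserted in Proposition~\ref{delta-prop} is immediate.
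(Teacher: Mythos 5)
Your proposal is correct and follows essentially the same route as the paper: split $G = H \times \langle s \rangle$ along the fibration class, realize $\theta_f$ up to the unit $s^m$ as the characteristic polynomial of an expanding $H$--labeled version of the dual digraph $D_f$ via Theorem~\ref{CTlabeled-thm}, and then invoke Theorem~\ref{McM-thm} and Proposition~\ref{distinguishedFactor}, noting that the unit twist affects neither houses, cones, nor divisibility. The one imprecision is that individual digraph edges carry no canonical $G$--label (only cycles do, since an edge is not a closed loop in $Y_f$), so the $H$--labeling must be made through choices of $1$--chain representatives --- exactly the $C_1$, $K_1$ and conjugate-digraph bookkeeping the paper carries out --- but this is the translation step you already flag and it does not alter the argument.
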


To prove Proposition~\ref{delta-prop} we write $G = H \times \langle s \rangle$ and identify $\theta_f$ with the
characteristic polynomial $P_f$ of an expanding $H$-matrix $M_f$.  Then 
Proposition~\ref{delta-prop} follows from Theorem~\ref{McM-thm}. 

Let 
$$
G=H_1(Y_f;\Z)/\mbox{torsion} = \Gamma^{\mbox{ab}}/{\mbox{torsion}},
$$
  and let $H$ be the image 
of $\pi_1(\tau)$ in $G$ induced by the composition
$$
\tau \rightarrow \tau\times \{0\} \hookrightarrow \tau \times [0,1] \overset{q}\rightarrow Y_f.
$$
Let 
$\rho_* : G \rightarrow \Z$ be the map corresponding to  $\rho : Y_f \rightarrow S^1$.

\begin{lemma} The group $G$ has decomposition as $G = H \times \langle s \rangle$, where
$\rho_*(s) = 1$.
\end{lemma}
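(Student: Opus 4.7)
The plan is to exploit the fibration $\rho \from Y_f \to S^1$ with fiber $\tau$ and monodromy $f$, apply the associated Wang exact sequence in homology, and then split off the $\Z$ factor corresponding to the base circle.

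First I would write down the Wang sequence
$$H_1(\tau;\Z) \xrightarrow{f_* - \mbox{id}} H_1(\tau;\Z) \to H_1(Y_f;\Z) \to H_0(\tau;\Z) \xrightarrow{f_* - \mbox{id}} H_0(\tau;\Z).$$
Since $\tau$ is connected, $H_0(\tau;\Z) = \Z$ and $f_* - \mbox{id}$ vanishes on $H_0$, so this collapses to the short exact sequence
$$0 \to \mbox{coker}\bigl(f_* - \mbox{id} \from H_1(\tau;\Z) \to H_1(\tau;\Z)\bigr) \to H_1(Y_f;\Z) \xrightarrow{\rho_*} \Z \to 0,$$
in which the left-hand subgroup is exactly the image of the map $H_1(\tau;\Z) \to H_1(Y_f;\Z)$ induced by the composition $\tau \to \tau \times \{0\} \into \tau \times [0,1] \overset{q}{\to} Y_f$ that defines $H$.

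Next I would pass to the quotient by torsion. Because the quotient group $\Z$ is torsion-free, every torsion element of $H_1(Y_f;\Z)$ lies in $\ker \rho_*$, and dividing by torsion produces the short exact sequence
$$0 \to H \to G \xrightarrow{\rho_*} \Z \to 0.$$
Since $\Z$ is free abelian, this sequence splits: choosing any preimage $s \in G$ of $1 \in \Z$ yields the desired internal direct product decomposition $G = H \times \langle s \rangle$ with $\rho_*(s) = 1$.

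The only point needing mild care is the identification of the image of $H_1(\tau;\Z)$ in $G$ with $\mbox{coker}(f_* - \mbox{id})$ modulo torsion, so that it really coincides with $H$ as defined in the excerpt; apart from this bookkeeping, the argument is routine and I anticipate no serious obstacle. A purely algebraic alternative, which avoids the Wang sequence entirely, is to abelianize the presentation of $\Gamma$ given in the introduction: the relations $s x_i s^{-1} = \phi_\circ(x_i)$ become $\phi_*(x_i) - x_i = 0$, giving $\Gamma^{\mbox{ab}} \cong F_n^{\mbox{ab}}/(\phi_* - \mbox{id}) F_n^{\mbox{ab}} \oplus \Z\langle s \rangle$ directly, and the same torsion argument then concludes the proof.
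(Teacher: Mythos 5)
Your proof is correct and follows essentially the same route as the paper: the paper simply asserts that $\rho_*$ is onto $\Z$ with kernel equal to $H$ and then splits the resulting extension by choosing any $s \in \rho_*^{-1}(1)$, using that $G/H \cong \Z$ is torsion free. Your Wang-sequence computation (or, equivalently, abelianizing the presentation of $\Gamma$) merely supplies the justification for the kernel identification that the paper leaves implicit, after which your splitting argument coincides with theirs.
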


\begin{proof} The map $\rho_*$ is onto $\Z$ and its kernel equals $H$.    Take any $s \in \rho_*^{-1}(1)$.  Then
since $s \notin H$, and $G/H$ is torsion free, we have $G = H \times \langle s \rangle$.
\end{proof}

We call $s$ a {\it vertical generator} of $G$ with respect to $\rho$, and identify $\Z G$ with the ring of
Laurent polynomials $\Z H (u)$ in the variable $u$ with coefficients in $\Z H$, by the map $\Z G \rightarrow \Z H(u)$
determined by sending $s \in \Z G$ to $u \in \Z H (u)$.  

\begin{definition} Given $\theta \in \Z G$, the {\it associated polynomial} $P_\theta(u)$ of $\theta$ is the image of
$\theta$ in $\Z H(u)$ defined by the identification $\Z G = \Z H(u)$.
\end{definition}

The definition of support for an associated polynomial $P_\theta$ is analogous to the one for $\theta$.

\begin{definition} The {\it support} of an element $P_\theta \in \Z H (u)$ is given by
$$
\Supp(P_\theta) = \{ h u^r\  |\ \mbox{$h$,$r$ are such that $(h,s^r) \in \Supp(\theta)$} \}.
$$
\end{definition}

Let $P_{\theta_f} \in Z H (u)$ be the polynomial associated to $\theta_f$.
Instead of realizing $P_{\theta_f}$ directly as a characteristic polynomial of an $H$-labeled digraph, we  start with a more natural
labeling of the digraph $D_f$.

Let $C_1=\Z^{\calV \cup \calE}$ be the free abelian
group generated by the  positively oriented edges of $Y_f$, which we can also think of as 1-chains in $\mathfrak C^{(1)}$
(see Definition~\ref{MTcelldecomp-def}).  
Let  $Z_1 \subseteq C_1$ be the subgroup corresponding to closed 1-chains.  
The map $\rho$ induces a homomorphism $\rho_* : C_1 \rightarrow \Z$.

Let $\nu: Z_1 \rightarrow G$ be the quotient map.
The map $\nu$ determines a ring homomorphism
\begin{eqnarray*}
\nu_*: \Z Z_1&\rightarrow& \Z G\\
\sum_{g \in Z_1} a_g g &\mapsto& \sum_{g \in J}a_g \nu(g).
\end{eqnarray*}
This extends to a map from $\Z Z_1[u]$ to $\Z G[u]$. 

Let $K_1 \subseteq Z_1$ be the kernel of $\rho_*|_{Z_1} : Z_1 \rightarrow \Z$.  
Then $H$ is the subgroup of $G$ generated by $\nu (K_1)$. 
 Let $\nu^H$ be the restriction of $\nu$ to $K_1$.
Then $\nu^H$ similarly defines 
$$
\nu^H_*: \Z K_1 \rightarrow \Z H,
$$
 the restriction of $\nu_*$ to $\Z K_1$, and this
extends to 
$$
\nu^H_* : \Z K_1[u] \rightarrow \Z H[u].
$$

\begin{proposition}\label{key-prop}
There is a Perron-Frobenius $K_1$-matrix $M^{K_1}_f$, whose
 characteristic polynomial $ P^{K_1}_f(u) \in \Z K_1[u]$
satisfies
$$
P_{\theta_f}(u) =  u^{-m} \nu^H_*   P^{K_1}_f(u).
$$
\end{proposition}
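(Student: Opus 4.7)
The plan is to apply the Coefficients Theorem for $H$-labeled digraphs (Theorem \ref{CTlabeled-thm}) to an appropriate $K_1$-labeling of the dual digraph $D_f$ of the mapping torus $(Y_f,\mathfrak C_f,\psi_f)$. Recall that the $m$ vertices of $D_f$ correspond to the $2$-cells $c_e$, one for each (unoriented) edge $e$ of $\tau$, and that each edge of $D_f$ from $v_{e_i}$ to $v_{e_j}$ is a hinge arising from an appearance of $e_j$ or $\bar e_j$ in the edge-path $f(e_i)$.

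First I would construct the $K_1$-labeling $h\from \calE_{D_f}\to K_1$ by lifting to the universal abelian cover $\pi\from \hat Y_f\to Y_f$, whose deck group is $G = H\times\langle s\rangle$. Fix a preferred lift $\hat c_e$ of each $2$-cell. For each hinge $\kappa$ in $Y_f$ (edge of $D_f$ from $v_{e_i}$ to $v_{e_j}$), let $\hat\kappa$ be its unique lift with initial cell $\hat c_{e_i}$; then its terminal cell has the form $\eta_\kappa\cdot\hat c_{e_j}$ for a uniquely determined $\eta_\kappa\in G$. Because every hinge traverses exactly one level of the semiflow, $\rho_*(\eta_\kappa)=1$, so we may write $\eta_\kappa = \mu_\kappa\cdot s$ with $\mu_\kappa\in H$. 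Define $h(\kappa)\in K_1$ to be any preimage of $\mu_\kappa$ under the surjection $\nu^H\from K_1\to H$.

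Next I would verify the key compatibility: for every cycle $\sigma\in \calC_{D_f}^G$, the homology class of the embedded cycle in $Y_f$ satisfies $g(\sigma) = \nu^H(h(\sigma))\cdot s^{\ell(\sigma)}$ in $G$. This follows by concatenating the lifts $\hat\kappa$ equivariantly along $\sigma$ in $\hat Y_f$: the deck translates $\eta_\kappa$ multiply to give the total translate, which is precisely $g(\sigma)$, while each of the $\ell(\sigma)$ edges of $\sigma$ contributes one power of $s$. Letting $M^{K_1}_f$ be the incidence matrix of the labeled digraph $(D_f,h)$ and $P^{K_1}_f(u)\in\Z K_1[u]$ its characteristic polynomial, Theorem \ref{CTlabeled-thm} gives $P^{K_1}_f(u) = u^m\theta_{D_f^{K_1}}(u)$. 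Applying $\nu^H_*$ termwise and invoking the compatibility above, the right-hand side becomes $u^m P_{\theta_f}(u)$ under the identification $\Z G = \Z H[u]$ with $s\leftrightarrow u$, which is the claimed identity. Finally, $M^{K_1}_f$ is expanding---the paper's Perron-Frobenius condition, cf.\ Definition \ref{PF_Gmat-def}---because its underlying digraph $D_f$ is expanding, a consequence of $f$ being an expanding irreducible train-track map.

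The main obstacle I anticipate is ensuring that the lifts $\hat c_e$ and $\hat\kappa$ are arranged coherently so that the deck translates really do concatenate to the global homology class along any cycle, rather than introducing reference-point discrepancies at each vertex. Once one checks equivariance of the hinge-lifting correspondence under the $G$-action and verifies that starting-lift contributions telescope to zero around closed loops, the remainder is a direct application of Theorem \ref{CTlabeled-thm}.
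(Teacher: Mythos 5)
Your proposal is correct and has the same skeleton as the paper's argument: label the dual digraph $D_f$, apply the coefficient theorem for labeled digraphs (Theorem~\ref{CTlabeled-thm}), push the characteristic polynomial forward by $\nu^H_*$, and get the Perron--Frobenius/expanding property from the fact that $f$ is an expanding irreducible train-track map. The difference is in how the labels are produced. The paper labels the edge of $D_f$ corresponding to the $i$-th hinge of $f(e)=e_1\cdots e_r$ by the explicit $1$--chain $g(\eta)=s_e\,t_{e_1}\cdots t_{e_{i-1}}=s'k(\eta)$ in the $1$--skeleton of $Y_f$, and only the cycle-sums of these chain-level labels are honestly closed (and hence land in $K_1$ after removing $\ell(\sigma)$ copies of $s'$); you instead read off, for each hinge, the deck transformation $\eta_\kappa=\mu_\kappa s\in G=H\times\langle s\rangle$ in the cover with deck group $G$ (this is the cover you want, rather than the full universal abelian cover), and then choose an arbitrary $K_1$-preimage of $\mu_\kappa$. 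Your holonomy computation that the reference lifts telescope around closed loops, so that $g(\sigma)=\nu^H(h(\sigma))\,s^{\ell(\sigma)}$, is exactly the compatibility the paper encodes with $g(\sigma)=k(\sigma)s^{\ell(\sigma)}$ in Lemma~\ref{conjugatedigraph-prop}, so the two constructions agree where it matters and your arbitrary choice of preimage is harmless since only $\nu^H_*$ of the characteristic polynomial enters the statement; indeed your version sidesteps the mild imprecision that the paper's individual labels $k(\eta)$ are not themselves cycles. One bookkeeping caveat: because the cycle polynomial of a labeled digraph is defined with $h(\sigma)^{-1}$, the labeling that makes the characteristic-polynomial identity literally correct is by the inverses $\mu_\kappa^{-1}$ --- this is precisely the paper's conjugate digraph of Definition~\ref{conjugatedigraph-def}; your computation follows Theorem~\ref{CTlabeled-thm} as printed, so this is a convention-level adjustment rather than a gap in your argument.
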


To construct $M^{K_1}_f$, we define a $K_1$-labeled digraph
with underlying digraph $D_f$.
Let $s$ be a vertical generator relative to $\rho_*$.
Choose any element $s' \in Z_1$ mapping to the vertical generator $s \in G$.
Write each $s_e \in \calV$ as $s_e =  s' k_e$, where $k_e \in K_1$.
Label edges of the digraph $D_f$
 by elements of $C_1$ as follows.  Let $f(e) = e_1 \cdots e_r$.  Then for each 
 $i=1,\dots,r$, there is a corresponding hinge  $\kappa_i$ whose initial cell corresponds to $e$ and 
 whose terminal cell corresponds to $e_i$.  Take any edge $\eta$ on $D_f$ emanating from $v_e$.  Then
 $\eta$ corresponds to one of the hinges $\kappa_i$, and has initial vertex $v_e$ and terminal vertex
 $v_{e_i}$.   For such an $\eta$, define
\begin{eqnarray*}
g(\eta) &=&  s_e  t_{e_1}  \cdots t_{e_{i-1}}\\
 &=&  s' k_e t_{e_1} \cdots t_{e_{i-1}}\\
&=&  s' k(\eta)
\end{eqnarray*}
where $k(\eta) \in K_1$.  This defines a map from the edges $D_f$ to $C_1$ giving
a labeling $\D_f^{C_1}$.  It also defines a map from edges of $D_f$ to $K_1$
by $\eta \mapsto k(\eta)$.  Denote this labeling of $D_f$ by $D_f^{K_1}$.

\begin{definition}\label{conjugatedigraph-def} Given a labeled digraph $\Dlab^G$, with edge labels $g(\eps)$ for each edge $\eps$ of 
the underlying digraph $D$,
the {\it conjugate digraph} $\widehat \Dlab^G$ of $\Dlab^G$ is
the digraph with same underlying graph $D$, and edge labels $g(\eps)^{-1}$ for each edge $\eps$ of $D$.
\end{definition}

Let $\widehat \calD_f^{K_1}$ be the conjugate digraph of $\calD_f^{K_1}$, and
let $\widehat M^{K_1}_f$ be the directed adjacency matrix for $\widehat \calD_f^{K_1}$.

\begin{lemma}\label{conjugatedigraph-prop}  
The cycle function $\theta_f \in \Z G$ and
the characteristic polynomial $\widehat P_f (u) \in \Z K_1 [u]$ of $\widehat M^{K_1}$
satisfy
$$
 \nu_*^H (\widehat P_f(u)) = u^mP_{\theta_f}(u).
$$
\end{lemma}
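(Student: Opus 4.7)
The plan is to prove the identity $\nu_*^H(\widehat{P}_f(u)) = u^m P_{\theta_f}(u)$ by expanding both sides as explicit sums over cycles in $D_f$ and matching them term by term.

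First, I would apply the Coefficient Theorem for $H$--labeled digraphs (Theorem~\ref{CTlabeled-thm}), with $H$ replaced by $K_1$, to the conjugate digraph $\widehat{\calD}_f^{K_1}$. Since the edge labels on $\widehat{\calD}_f^{K_1}$ are the inverses $k(\eta)^{-1}$, the standard expansion of $\det(uI - \widehat M_f^{K_1})$ as a sum over permutations regroups into a sum indexed by cycles $\sigma = \eta_1 \cdots \eta_{\ell(\sigma)} \in C_{D_f}$, where each $\sigma$ contributes a term involving the product $k(\sigma)^{-1} = \prod_i k(\eta_i)^{-1}$, the sign $(-1)^{|\sigma|}$, and the factor $u^{m - \ell(\sigma)}$. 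Since $\nu_*^H \from \Z K_1[u] \to \Z H[u]$ is a ring homomorphism applied coefficient-wise (and hence commutes with $\det$), applying it to $\widehat{P}_f(u)$ replaces each $k(\sigma)^{-1}$ with $\nu^H(k(\sigma))^{-1} \in \Z H$, yielding a polynomial in $\Z H[u]$.

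Next, I would compute $u^m P_{\theta_f}(u)$ directly from the cycle function $\theta_f = 1 + \sum_\sigma (-1)^{|\sigma|} g(\sigma)^{-1}$. The essential geometric input is that for any cycle $\sigma = \eta_1 \cdots \eta_{\ell(\sigma)}$ in $D_f$, the $C_1$--valued edge labels $g(\eta_i) = s' \cdot k(\eta_i)$ sum to a closed $1$-chain in $Z_1$, whose image under $\nu \from Z_1 \to G$ decomposes in $G = H \times \langle s \rangle$ as $\nu(g(\sigma)) = s^{\ell(\sigma)} \cdot \nu^H(k(\sigma))$: the $s^{\ell(\sigma)}$ factor records that each dual edge crosses one level of the mapping torus, while $\nu^H(k(\sigma))$ records the horizontal $H$--component. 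Substituting this decomposition into the cycle function and translating $\Z G$ to $\Z H[u^{\pm}]$ via $s \leftrightarrow u$ gives
\[
u^m P_{\theta_f}(u) = u^m + \sum_{\sigma \in C_{D_f}} (-1)^{|\sigma|}\, \nu^H(k(\sigma))^{-1}\, u^{m - \ell(\sigma)},
\]
which matches the expansion of $\nu_*^H(\widehat{P}_f(u))$ from the previous paragraph.

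The hard part of the argument will be bookkeeping the signs and inversions so that they compose correctly. The inversion introduced by passing to the conjugate digraph $\widehat{\calD}_f^{K_1}$ must combine with the inversion built into the definition of the cycle function $\theta_f$ so that each cycle $\sigma$ contributes the matching term $(-1)^{|\sigma|} \nu^H(k(\sigma))^{-1} u^{m-\ell(\sigma)}$ on both sides, and one must verify that $\nu^H$ (being a group homomorphism) correctly intertwines inversion in $K_1$ with inversion in $H$, so that $\nu^H(k(\sigma)^{-1}) = \nu^H(k(\sigma))^{-1}$. This alignment is precisely what motivates the use of the conjugate digraph in the construction: it couples the $K_1$--labeled adjacency matrix to the inverted homology class $g(\sigma)^{-1}$ that appears in $\theta_f$.
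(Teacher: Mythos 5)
Your proposal is correct and follows essentially the same route as the paper: apply the coefficient theorem (equivalently, the cycle expansion of $\det(uI-\widehat M^{K_1}_f)$) to the conjugate $K_1$--labeled digraph, use the relation $g(\sigma)=k(\sigma)s^{\ell(\sigma)}$ coming from the labels $g(\eta)=s'k(\eta)$, and push forward by $\nu^H_*$ to compare term by term with $\theta_f$ under $s\leftrightarrow u$. Your extra bookkeeping of the inversions and of $\nu^H(k(\sigma)^{-1})=\nu^H(k(\sigma))^{-1}$ only makes explicit what the paper leaves to a one-line comparison.
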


\begin{proof}  By  the coefficient theorem for labeled digraphs (Theorem~\ref{CTlabeled-thm}) we have
\[
\widehat P_f (u)  = u^m\theta_{\calD_f^{K_1}} = u^m(1+ 
\sum_{\sigma \in C_{D_f}} (-1)^{|\sigma|} k(\sigma)^{-1} u^{-\ell(\sigma)}).
\]
Since $g(\sigma) = k(\sigma)s^{\ell(\sigma)}$, a comparison of $\widehat P_f$ with 
$\theta_f$ gives the desired result.
\end{proof}

\begin{proof}[Proof of Proposition~\ref{delta-prop}] Let $M_f$ be the matrix with entries in $\Z H$ 
given by taking $\widehat M^{K_1}$ and applying
$\nu^H$ to its entries.  Then the characteristic polynomial $P_f$ of $M_f$ is  related to 
the characteristic polynomial $\widehat P_f$ of $\widehat M^{K_1}$ by
$$
P_f(u) = \nu_*^H(\widehat P_f(u)).
$$
Thus,  Lemma~\ref{conjugatedigraph-prop} implies
$$
P_f (u) = u^mP_{\theta_f}(u),
$$
and hence the properties of Theorem~\ref{McMullen-thm} applied to $\widehat P_f$ also hold
for $\theta_f$.
\end{proof}

\section{The folded mapping torus and its DKL-cone}\label{foldedmappingtorus-sec}

We start this section by defining a folded mapping torus and stating some 
results of Dowdall-Kapovich-Leininger on deformations of free group automorphisms.
We then proceed to finish the proof of the main theorem. 

\subsection{Folding maps}  In \cite{Stallings_folds} Stallings introduced
the notion of a folding decomposition of a train-track map.

\begin{definition}[Stallings \cite{Stallings_folds}]\label{fold-def}  
Let $\tau$ be a topological graph, and $v$ a vertex on $\tau$.  Let $e_1,e_2$ be 
two distinct edges of $\tau$ meeting at $v$, and let $q_1$ and $q_2$ be
their other endpoints.  Assume that $q_1$ and $q_2$ are distinct vertices of $\tau$.
The  {\it fold} of $\tau$ at $v$, is the image $\tau_1$ of a quotient map $\f_{(e_1,e_2:v)} : \tau \rightarrow \tau_1$ where
$q_1$ and $q_2$ are identified as a single vertex in $\tau_1$
and the  two edges $e_1$ and $e_2$ are identified as a single edge in $\tau_1$.  
The map $\f_{(e_1,e_2:v)}$ is called a {\it folding map}
\end{definition}

It is not hard to check the following.

\begin{lemma}  
Folding maps on train-tracks are  homotopy equivalences.
\end{lemma}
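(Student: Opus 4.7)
My plan is to exhibit the folding map $\f_{(e_1,e_2:v)} : \tau \to \tau_1$ as a homotopy equivalence by collapsing contractible subcomplexes on both the source and the target, and then identifying the quotient map with a homeomorphism. First, I would let $T = e_1 \cup e_2 \subset \tau$. Since $q_1 \neq q_2$ by hypothesis, $T$ consists of two edges meeting only at $v$, i.e.\ a path $q_1 \to v \to q_2$ of length two; in particular, $T$ is a tree and therefore contractible. Its image $\f(T)$ is a single edge in $\tau_1$, likewise contractible.

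Next, I would observe that $\f$ acts as the identity on $\tau \setminus T$ (so points off of $T$ are not identified with anything else), while within $T$ the only nontrivial identifications glue $e_1$ to $e_2$ pointwise and $q_1$ to $q_2$. From this it follows that $\f$ induces a well-defined continuous bijection $\bar{\f} \from \tau / T \to \tau_1 / \f(T)$. Because $\tau$ and $\tau_1$ are finite CW complexes and $T,\f(T)$ are closed subcomplexes, the quotients are compact Hausdorff spaces, so the continuous bijection $\bar{\f}$ is automatically a homeomorphism.

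Finally, since $T$ and $\f(T)$ are contractible subcomplexes of their respective graphs, the collapse maps $\tau \to \tau/T$ and $\tau_1 \to \tau_1/\f(T)$ are homotopy equivalences (the standard fact that collapsing a contractible subcomplex of a CW complex is a homotopy equivalence, via the cofibration $T \hookrightarrow \tau$). Composing the homotopy equivalence $\tau \to \tau/T$ with the homeomorphism $\bar{\f}$ and the homotopy-inverse of $\tau_1 \to \tau_1/\f(T)$ identifies $\f$ with a homotopy equivalence up to homotopy, whence $\f$ itself is a homotopy equivalence.

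I do not anticipate any real obstacle: the only subtle point is the role of the hypothesis $q_1 \neq q_2$, which is precisely what ensures $T$ is a tree and not a loop. Were $q_1 = q_2$, then $T$ would be a circle and the fold would kill a nontrivial loop in $\pi_1(\tau)$, in which case the statement would fail. Everything else is a direct application of standard CW homotopy theory together with the compact-Hausdorff trick for promoting a continuous bijection to a homeomorphism.
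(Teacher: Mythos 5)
Your argument is correct. Note that the paper does not actually supply a proof of this lemma (it is dismissed with ``It is not hard to check the following''), so there is nothing to match against; your write-up is a complete and standard way to fill in the omitted verification. The key points all check out: with $q_1\neq q_2$ the union $T=e_1\cup e_2$ is a tree, the fold identifies points only within $T$ (so $\f^{-1}(\f(T))=T$ and the induced map $\bar\f\from \tau/T\to\tau_1/\f(T)$ is a continuous bijection, hence a homeomorphism by compactness of the source and Hausdorffness of the target), the collapse maps are homotopy equivalences because $(\tau,T)$ and $(\tau_1,\f(T))$ are CW pairs with contractible subcomplexes, and the strict commutativity $p_1\circ\f=\bar\f\circ p$ plus two-out-of-three gives that $\f$ is a homotopy equivalence. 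Two small remarks: first, an equally common route is to observe that $\f$ is surjective on $\pi_1$, preserves Euler characteristic (one vertex and one edge are lost), and maps one graph to another, hence induces an isomorphism of free groups of equal finite rank from a surjection, so it is a homotopy equivalence of aspherical spaces; your collapse argument avoids invoking Hopficity of free groups, which is a mild advantage. Second, your reduction tacitly assumes neither $e_1$ nor $e_2$ is a loop at $v$ (otherwise $T$ is not a tree); this is implicit in the paper's phrasing ``their other endpoints'' together with $q_1\neq q_2$, so it is not a gap with respect to Definition~\ref{fold-def}, but if loops were permitted the fold would still be a homotopy equivalence and would require the $\pi_1$-style argument instead. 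Your closing observation that the hypothesis $q_1\neq q_2$ is exactly what prevents killing the loop $e_1\overline{e_2}$ is also correct.
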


\begin{definition}
A {\it folding decomposition} of a graph map $f \from \tau \rightarrow \tau$ is a decomposition
\[
f = h f_k\cdots f_1
\]
where $\tau_0$ is the graph $\tau$ with a finite number of subdivisions on the edges, 
$f_i \from \tau_{i-1} \rightarrow \tau_i$ for $i=1,\dots,k$ are folding maps, and 
$h \from \tau_k \rightarrow \tau_k$ is a homeomorphism.
We denote the folding decomposition by $(f_1,\dots,f_k;h)$.
\end{definition}

\begin{figure}[htbp] 
   \centering
   \includegraphics[scale=.7]{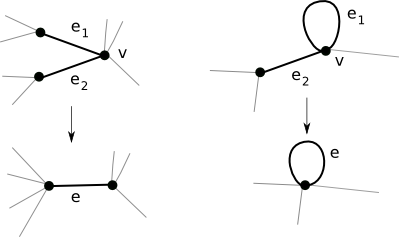} 
   \caption{Two examples of folding maps.}
   \label{foldfig}
\end{figure}

\begin{lemma}[Stallings \cite{Stallings_folds}] 
Every homotopy equivalence of a graph to itself has a (non-unique) folding decomposition. 
Moreover, the homeomorphism at the end of the decomposition is uniquely determined.
\end{lemma}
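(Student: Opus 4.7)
The plan is to induct on the edge count of $\tau_0$ (after a preliminary subdivision), using that each fold strictly reduces the number of edges and that a simplicial immersion which is a homotopy equivalence is a homeomorphism. First I would subdivide the edges of $\tau$ so that $f$ becomes a simplicial map $f\colon\tau_0 \to \tau$ sending each edge to a single edge; this is exactly the ``finite number of subdivisions'' permitted in the definition of a folding decomposition. If $f$ is already an \emph{immersion} (locally injective at every vertex), then as a simplicial $\pi_1$-isomorphism between finite connected graphs of the same rank it must be a homeomorphism, and the decomposition is trivial with $k=0$ and $h=f$.

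Otherwise there are two distinct edges $e_1,e_2$ sharing an initial vertex $v$ with $f(e_1)=f(e_2)$. Their other endpoints $q_1,q_2$ must be distinct, for if $q_1=q_2$ then the loop $e_1\overline{e_2}$ would be essential in $\pi_1(\tau_0)$ yet map to a null-homotopic path under $f$, contradicting the $\pi_1$-injectivity of the homotopy equivalence $f$. Hence the fold $f_1 = \f_{(e_1,e_2:v)}\colon\tau_0\to\tau_1$ of Definition~\ref{fold-def} is available, and the equality $f(e_1)=f(e_2)$ ensures that $f$ descends through $f_1$ to a simplicial graph map $f^{(1)}\colon\tau_1\to\tau$ with $f = f^{(1)}\circ f_1$. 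Because $f_1$ is a homotopy equivalence, so is $f^{(1)}$. Since $\tau_1$ has one fewer edge than $\tau_0$, iterating this step terminates in a map $f^{(k)}\colon\tau_k\to\tau$ that is an immersion, and hence, by the base case, a homeomorphism $h$. Concatenating gives the required decomposition $f = h\, f_k \cdots f_1$.

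For uniqueness of $h$, suppose $f = h\, f_k\cdots f_1 = h'\, f'_m \cdots f'_1$ are two folding decompositions of $f$. Both terminal graphs $\tau_k$ and $\tau'_m$ come equipped with immersions to $\tau$ that compose with the corresponding fold sequences to give $f$, so each is an immersed graph representative of the subgroup $f_*\pi_1(\tau_0) = \pi_1(\tau)$ of $\pi_1(\tau)$. The standard uniqueness of immersed graph representatives of finitely generated subgroups of a free group produces a canonical isomorphism $\tau_k \cong \tau'_m$ intertwining the two immersions, and this isomorphism conjugates $h$ to $h'$; in particular $h$ is determined by $f$ independently of the chosen fold sequence. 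I expect the only delicate point to be the fold-availability step — namely that whenever $f$ fails to be an immersion one can find $e_1,e_2$ satisfying the distinct-endpoints hypothesis of Definition~\ref{fold-def} — and this is exactly what the $\pi_1$-injectivity argument above delivers.
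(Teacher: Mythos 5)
Your skeleton --- subdivide so that $f$ is edge-to-edge, fold as long as local injectivity fails, and stop at an immersion --- is the standard Stallings argument (the paper gives no proof of this lemma, only the citation), and your observation that $\pi_1$-injectivity forces the two foldable edges to have distinct far endpoints, so that Definition~\ref{fold-def} applies and no rank is lost, is correct. The genuine gap is your base case. It is \emph{not} true that a simplicial immersion inducing a $\pi_1$-isomorphism between finite connected graphs of the same rank is a homeomorphism: let $\tau$ be a circle with one extra edge attached at a vertex (a valence-one whisker); the immersion of a circle onto the loop of $\tau$ is a $\pi_1$-isomorphism between connected rank-one graphs, but it is not surjective, hence not a homeomorphism. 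The same example shows the lemma carries an implicit surjectivity hypothesis: the graph map $\tau\to\tau$ sending the whisker over the loop and the loop to itself is a homotopy equivalence admitting no folding decomposition at all, since folds and homeomorphisms are surjective. (Surjectivity is automatic in the paper's setting, e.g.\ for expanding irreducible train-track maps, or when $\tau$ has no valence-one vertices.) To close the gap you must (i) note that the terminal immersion is surjective because $f$ is and folds are quotient maps, and (ii) actually prove that a surjective, $\pi_1$-surjective immersion of finite connected graphs is injective --- for instance via Stallings' completion of an immersion to a finite covering: the immersion embeds in a covering of $\tau$ whose image subgroup contains $f_*\pi_1(\tau_0)=\pi_1(\tau)$, so the covering has degree one and the immersion is an embedding; a bijective edge-to-edge map of finite graphs is a homeomorphism. ``Same rank'' alone cannot substitute for this.

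The uniqueness paragraph also has a gap. Uniqueness of immersed graph representatives applies to (pointed) core graphs, not to arbitrary $\pi_1$-surjective immersions --- the circle and the whiskered circle both immerse $\pi_1$-surjectively into the whiskered circle without being isomorphic --- but more importantly, producing an isomorphism $\varphi\colon\tau_k\to\tau'_m$ intertwining the two immersions proves nothing by itself: $\varphi=(h')^{-1}\circ h$ always intertwines two homeomorphisms. What the ``moreover'' clause requires is that $h$ is independent of the chosen fold sequence, and for that $\varphi$ must also be compatible with the fold compositions out of the subdivision, i.e.\ $\varphi\circ(f_k\cdots f_1)=f'_m\cdots f'_1$; surjectivity of the fold composition then gives $h=h'$. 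This compatibility is the confluence of folding: the fully folded quotient of $\tau_0$, together with its quotient map, is canonical, because two points are identified by some (equivalently, any) maximal fold sequence exactly when they have the same $f$-image and a path between them maps to a path null-homotopic rel endpoints in $\tau$ (one inclusion because each individual fold only identifies such pairs; the other because an immersion sends reduced paths to reduced paths, so such pairs can never survive to a folded graph). With that description, $h$ is the map induced by $f$ on this canonical quotient and is therefore determined by $f$; without it, the uniqueness claim is not established.
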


Decompositions of a train-track map into a composition of folding maps gives rise to a 
branched surface that is homotopy equivalent to $Y_f$.

Let $f: \tau \rightarrow \tau$ be a train-track map with a folding decomposition
$\f = (f_1,\dots,f_k;h)$, where $f_i : \tau_{i-1} \rightarrow \tau_i$ is a folding map, 
for $i=1,\dots,k$, $\tau = \tau_0 = \tau_k$,  and $h: \tau \rightarrow \tau$ is a 
homeomorphism.

For each $i=0,\dots,k$, define a 2-complex $X_i$ and semiflow $\psi_i$ as follows.
Say $f_i$ is the folding map on $\tau$ folding $e_1$ onto $e_2$ at their common 
endpoint $v$.  Let $q$ be the initial vertex of both $e_1$ and $e_2$, and $q_i$ the 
terminal vertex of $e_i$. Let $X_i$ be the quotient of $\tau_{i-1} \times [0,1]$ 
obtained by identifying the triangles 
\[
\big[(q,0),(q,1),(q_1,1) \big] \quad\text{on} \quad e_1 \times[0,1]
\]
with 
\[
\big[(q,0),(q,1),(q_2,1) \big] \quad\text{on} \quad e_2 \times [0,1].
\]
The semi-flow $\psi_i$ is defined by the second coordinate of $\tau_{i-1} \times [0,1]$. 
By the definitions, the image of $\tau_{i-1} \times\{1\}$ in $X_i$ under the quotient map
is $\tau_i$.

Let $X_\f$ be the union of pieces $X_0 \cup \cdots \cup X_k$ 
so that the image of $\tau_{i-1} \times \{1\}$ in $X_{i-1}$ is attached to the image of
$\tau_i \times \{0\}$ in $X_i$ by their identifications with $\tau_i$, and the image of 
$\tau_k \times \{1\}$
in $X_k$ is attached to the image of $\tau_0 \times \{0\}$ in $X_0$ by $h$.

Each $X_i$ has a semiflow induced by its structure as the quotient of 
$\tau_{i} \times [0,1]$. This
induces a semiflow $\psi_\f$ on $X_\f$.  The cellular structure on $X_\f$ is defined 
so that the $0$--cells correspond to the images in $X_i$ of $(q,0), (q,1), (q_1,1)$ and 
$(q_2,1)$.   The transversal $1$--cells of $\mathfrak C_\f$ correspond to the images in 
$X_i$ of edges $[(q,0), (q_i,1)]$, for $i=1,2$. The vertical $1$--cells of $\mathfrak C_\f$ 
are the forward flows of all the vertices of $X_\f$.   The vertical
and transversal $1$--cells form the boundaries of the $2$--cells of $\mathfrak C_\f$.

\begin{definition}[cf. \cite{DKL}]\label{foldedmappingtorus-def} 
A {\it folded mapping torus} associated to a folding decomposition $\f$
of a train-track is the branched surface $(X_\f,\mathfrak C_\f, \psi_\f)$ defined above.
\end{definition}

\begin{lemma}  
If $(X_\f,\mathfrak C_{\f}, \psi_{\f})$ is a folded mapping torus, then there is a  cellular 
decomposition of $X_\f$ so that the following holds:
\begin{enumerate}[(i)]
\item The $1$--skeleton $\mathfrak C_\f^{(1)}$  is a union of oriented $1$--cells 
meeting only at their endpoints.
\item  Each $1$--cell has a distinguished orientation so that the corresponding tangent 
directions are either tangent to the flow ({\it vertical case}) or positive but skew to the 
flow ({\it diagonal case}).
\item The endpoint of any vertical $1$--cell is the starting point of another vertical $1$--cell.
\end{enumerate}
\end{lemma}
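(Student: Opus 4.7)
The plan is to verify the three properties directly from the cellular structure $\mathfrak C_{\f}$ described in Definition~\ref{foldedmappingtorus-def}, working piece by piece in each $X_i$ and then reconciling across the gluings that attach $X_i$ to $X_{i+1}$ by identifying the image of $\tau_{i-1} \times \{1\}$ in $X_{i-1}$ with $\tau_i \times \{0\}$ in $X_i$ (and the top of $X_k$ to the bottom of $X_0$ by $h$). Throughout, I would orient every vertical $1$--cell in the direction of the semiflow and every transversal $1$--cell from its lower-time endpoint to its higher-time endpoint.

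For property (i), I would take explicit lifts of the $0$--cells and $1$--cells of $\mathfrak C_{\f}$ in the products $\tau_{i-1} \times [0,1]$ and check that each lift is an embedded arc whose only intersections with other lifts occur at $0$--cells. The only identifications made in assembling $X_\f$ are the triangle identifications inside each $X_i$ and the attachings of $\tau_{i-1} \times \{1\}$ to $\tau_i \times \{0\}$; both are cellular with respect to the natural $1$--skeleton of each piece, so they do not create interior intersections among $1$--cells.

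For property (ii), the vertical $1$--cells are by definition segments of forward $\psi_{\f}$--orbits of $0$--cells, so they are tangent to the flow. The triangles produced by the folding identification inside $X_i$ have one vertical side $(q,0) \to (q,1)$ and two sides of the form $(q,0) \to (q_j,1)$, which are monotonically increasing in time and hence diagonal in the required sense. The heart of the argument, and the main obstacle I anticipate, is to rule out any purely horizontal (constant-time) $1$--cell. Here I would check that for each edge $e$ of $\tau_{i-1}$ not involved in the fold $f_i$, the corresponding rectangle $e \times [0,1]$ in $X_i$ is either already subdivided by a diagonal $1$--cell into two triangles whose only sides are vertical, diagonal, or boundary-of-next-piece edges, or else I would insert such a diagonal and record the subdivision as part of the refined cellular structure. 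I would then need to confirm that the two subdivisions arising on either side of a common time-$i$ horizontal edge (one in $X_{i-1}$, one in $X_i$) are compatible, so that the common edge can either be eliminated or re-interpreted as a diagonal $1$--cell in one of the pieces.

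For property (iii), I would argue that every $0$--cell of $X_\f$ is the terminal point of a vertical $1$--cell inside some piece $X_i$ and simultaneously the starting point of a vertical $1$--cell that continues via the $\psi_{\f}$--orbit either further within $X_i$ or into the next piece $X_{i+1}$ through the gluing (with the closing homeomorphism $h$ handling the wrap-around from $X_k$ to $X_0$). This relies only on the fact that $\psi_{\f}$ is defined for all positive time and that the $0$--skeleton is forward-invariant in the sense of property~(\ref{forwardflow-item}) of Definition~\ref{branchsurface-def}, applied inductively. Once the horizontal-edge issue in (ii) is dispatched, (i) and (iii) fall out of the combinatorics of the construction.
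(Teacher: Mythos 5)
Your overall strategy---verify (i)--(iii) directly on the decomposition $\mathfrak C_\f$ of Definition~\ref{foldedmappingtorus-def}, orienting vertical $1$--cells by the flow and the transversal ones in the direction of increasing flow time---is the right one, and it is what the paper does: the transversal $1$--cells are exactly the fold diagonals and the vertical $1$--cells are the flow suspensions of their endpoints. The problem is the step you yourself single out as the heart of the argument. There are no horizontal $1$--cells to ``rule out'': in $\mathfrak C_\f$ the $1$--skeleton consists only of the images of the diagonals $[(q,0),(q_i,1)]$ together with the forward flows of the vertices, while the levels $\tau_i$---including the top sides of the identified triangles and the tops of the rectangles $e\times[0,1]$ for edges $e$ not involved in the fold $f_i$---are not $1$--cells at all; the $2$--cells of $\mathfrak C_\f$ straddle the interfaces between consecutive pieces $X_{i-1}$ and $X_i$, and their boundaries are made up exclusively of vertical and diagonal cells. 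Your anticipated obstacle comes from implicitly keeping the graphs $\tau_i$ in the skeleton, and neither of your proposed remedies repairs this: a constant-time edge can never be ``re-interpreted as a diagonal,'' since its tangent direction has no positive component along $\psi_\f$ and so violates (ii) under any orientation; and the alternative of inserting a diagonal into every such rectangle and then checking that the subdivisions on the two sides of a common time-$i$ edge are compatible is both unnecessary and left entirely unverified---moreover each inserted diagonal would force you to add the forward flows of its endpoints as new vertical $1$--cells to keep the structure $\psi_\f$-compatible, in the spirit of Definition~\ref{vertsub-def}.

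Once this is corrected the rest of your outline goes through and coincides with the intended proof: (i) holds because the triangle identification inside each $X_i$ and the attachment of the image of $\tau_{i-1}\times\{1\}$ to $\tau_i\times\{0\}$ (and of the top of $X_k$ to the bottom of $X_0$ via $h$) are cellular for this skeleton, so distinct $1$--cells meet only at $0$--cells; (ii) holds because each fold diagonal is strictly increasing in flow time and every remaining $1$--cell is a flow segment; and (iii) holds because the vertical $1$--cells are precisely the forward orbits of the $0$--cells, so the terminal point of a vertical $1$--cell is a $0$--cell whose forward orbit continues as the next vertical $1$--cell, as you indicate. In short: drop the horizontal-edge machinery, observe that the $2$--cells cross the levels $\tau_i$, and your verification reduces to the one-line observation in the paper.
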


\begin{proof}  The cellular decomposition of $X_\f$ has transversal $1$--cells 
corresponding to the folds, and vertical $1$--cells corresponding to the flow suspensions 
of the endpoints of the diagonal $1$--cells. 
\end{proof}

\subsection{Simple example} We give a simple example of a train-track map, a 
folding decomposition and their associated branched surfaces.

Consider the train-track in Figure~\ref{fig:figure8}, and the train-track map corresponding 
to the free group automorphism $\phi \in \Out(F_2)$ defined by
\begin{eqnarray*}
a &\mapsto& ba\\
b &\mapsto& bab
\end{eqnarray*}

\begin{figure}[htbp] 
   \centering
   \includegraphics[height=0.5in]{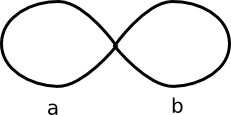} 
   \caption{Two petal rose. }
   \label{fig:figure8}
\end{figure}

Then the corresponding train-track map $f \from \tau \rightarrow \tau$ sends the edge $a$ 
over $b$ and $a$, and the edge $b$ over $b$ then $a$ then $b$.  The corresponding 
mapping torus is shown on the left of Figure~\ref{rosemap-fig}.    
\begin{figure}[htbp] 
   \centering
   \includegraphics[height=1.4in]{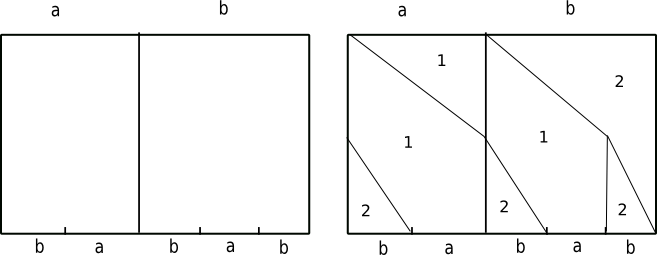} 
   \caption{Mapping torus and folded mapping torus. }
   \label{rosemap-fig}
\end{figure}
A folding decomposition is obtained from $f$ by subdividing the edge $a$ twice
and the edge $b$ three times.  The first fold identifies the entire edge $a$ with two 
segments of the edge $b$.   This yields a train-track that is homeomorphic to the original. 
The second fold identifies the edge $b$ to one segment of the edge $a$.   The resulting 
folded mapping torus is shown on the right of Figure~\ref{rosemap-fig}.
Here cells labeled with the same number are identified.

\subsection{Dowdall-Kapovich-Leininger's theorem}

Recall that elements $\alpha \in H^1(X_\f;\R)$ can be represented
by cocycle classes $z \from H_1(X_f;\R) \rightarrow \R$.

\begin{definition} \label{Def:DKL-cone}
Given a branched surface $\sX = (X_\f,\mathfrak C_\f,\psi_\f)$, orient the edges of $\mathfrak C_\f$ positively with respect to the semi-flow $\psi_\f$.
The associated {\it positive cone} for $\sX$ in $H^1(X;\R)$, denoted
$\calA_\f$, is given by
\[
\calA_\f = \big\{\alpha \in H^1(X_\f;\R) \ \big| \ 
 \mbox{there is a  $z \in \alpha$ so that $z(e) > 0$ for all  $e \in\mathfrak C_\f^{(1)}$} \big\}.
\]
\end{definition}

\begin{theorem}[Dowdall-Kapovich-Leininger \cite{DKL}]\label{DKL1-thm}
Let $f$ be an expanding irreducible train-track map, $\f$ a folding decomposition of $f$
and $(\xdkl,\cdkl,\pdkl)$ the folded mapping torus associated to $\f$.  For 
every integral $\al \in A_\f$ there is a continuous map $\eta_\al \from X_\f \to S^1$
with the following properties.
\begin{enumerate}
\item Identifying $\pi_1(\xdkl)$ with $\Gamma$ and $\pi_1(S^1)$ with $\ZZ$, 
$(\eta_\al)_* = \al$,
\item The restriction of $\eta_\al$ to a semiflow line is a local diffeomorphism. The restriction of $\eta_\al$ to a flow line in a $2$--cell is a non-constant affine map. 
\item\label{lenVSalpha} For all simple cycles $c$  in $\xdkl$ oriented positively with respect to the flow, $\ell(\eta_\al(c)) = \al([c])$ where $[c]$ is the image of $c$ in $G$.
\item\label{embedGraph} Suppose $x_0 \in S^1$ is not the image of any vertex, denote $\tau_\al := \eta_\al^{-1}(x_0)$. 
If $\al$ is primitive $\tau_\al$ is connected, and $\pi_1(\tau_\al) \cong \ker(\al)$.
\item For every $p \in \tau_\al \cap (\cdkl)^{(1)}$, there is an $s \geq 0$ so that $\psi(p,s) \in (\cdkl)^{(0)}$.
\item The flow induces a map of first return $f_\al\from \tau_\al \to \tau_\al$, which is an expanding  irreducible train-track map. 
\item The assignment that associtates to a primitive integral $\al \in A_\f$ the logarithm of the dilatation of $f_\al$ can be extended to a continuous and convex function on $A_\f$. 
\end{enumerate}
\end{theorem}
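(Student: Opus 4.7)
The plan is to build $\eta_\al$ directly from a cellular cocycle representative of $\al$ whose values on $1$--cells are positive, and then to read off the fiber $\tau_\al$ and the first return map $f_\al$ from this data. First, I would fix an integral cocycle $z$ representing $\al$ with $z(e) > 0$ for every $1$--cell $e \in \cdkl^{(1)}$; such a $z$ exists by the definition of $\calA_\f$. After rescaling the target circle so that $\al(\Gamma) \subset \Z$ corresponds to $\pi_1(S^1)$, define $\eta_\al$ on the universal cover $\widetilde{\xdkl}$ by path--integration of $z$ from a chosen basepoint, and descend to $\eta_\al \from \xdkl \to S^1$. Extend affinely across each $2$--cell using its product structure inherited from the folding decomposition. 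Properties (1) and (3) are immediate from this construction; property (2) uses that $z$ is strictly positive along every vertical $1$--cell traversed by a flow line, so the restriction of $\eta_\al$ to a semiflow line is piecewise affine with positive slopes.

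Next I would analyze the fibers. Because $\eta_\al$ is transverse to the $1$--skeleton away from the image of $\cdkl^{(0)}$, for any regular value $x_0$ the preimage $\tau_\al := \eta_\al^{-1}(x_0)$ is a finite $1$--complex that meets each $2$--cell in a collection of properly embedded arcs. Property (4) then reduces to the standard fact that connectedness of a circle-valued fiber is controlled by surjectivity of the induced map on $\pi_1$: primitivity of $\al$ forces $\tau_\al$ connected with $\pi_1(\tau_\al) = \ker(\al)$. For property (5), note that by the combinatorial structure of $\cdkl$ a forward $\pdkl$--orbit that begins in $\cdkl^{(1)}$ remains in $\cdkl^{(1)}$ (the forward orbit of any vertex lies in the $1$--skeleton), and it advances $\eta_\al$ at a strictly positive rate, so in finite time the orbit must reach $\cdkl^{(0)}$.

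Given (4) and (5) the first return map $f_\al \from \tau_\al \to \tau_\al$ is defined by following semiflow lines until their first return to $\tau_\al$. To verify that $f_\al$ is an expanding irreducible train-track map representing the monodromy of $\eta_\al$, the main task is the no-backtracking condition: every iterate of $f_\al$ on an edge of $\tau_\al$ must produce a locally injective edge path. I would extract this from the transverse structure of $\tau_\al$ with respect to $\pdkl$ together with the observation that no flow line in $\xdkl$ re-enters a transversal $1$--cell with the opposite orientation, a property built into the folded mapping torus. Irreducibility and expansion of $f_\al$ would then be transferred from those of the original $f$, using that any edge of $\tau_\al$ sweeps through every $2$--cell of $\xdkl$ under sufficient iteration of the semiflow.

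Finally, for property (7), I would combine the preceding item with the algebraic apparatus of Sections~\ref{digraphs-sec} and \ref{mappingtori_and_branchedsurfaces-sec}: by Proposition~\ref{delta-prop}, for integral $\al \in \calA_\f$ the dilatation $\lambda(f_\al)$ equals $|\Theta^{(\al)}|$, and Theorem~\ref{McM-thm} gives that $\log|\Theta^{(\al)}|$ extends to a real-analytic, convex, homogeneous-of-degree-$(-1)$ function on the McMullen cone containing $\calA_\f$; restricting to $\calA_\f$ yields continuity and convexity of $\al \mapsto \log\lambda(f_\al)$. The main obstacle I anticipate is the verification in paragraph three: establishing no-backtracking for all iterates of $f_\al$ requires a careful combinatorial analysis of how transversal arcs in $\tau_\al$ interact with the folds composing $\f$, and cannot be reduced to abstract branched surface theory alone.
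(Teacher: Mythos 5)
You should first note what you are comparing against: the paper does not prove this theorem at all --- its ``proof'' is the single sentence that the statement is a compilation of results of \cite{DKL}. So your proposal is an attempt to reconstruct the Dowdall--Kapovich--Leininger arguments, and judged on those terms it has genuine gaps at exactly the places where the real work lies. The most serious one is item (6), the technical heart of the statement: that the first return map $f_\al$ is a train-track map (no back-tracking under \emph{all} iterates) and is irreducible and expanding. You explicitly leave this open (``the main obstacle I anticipate\dots cannot be reduced to abstract branched surface theory alone''), and the transfer you sketch --- ``any edge of $\tau_\al$ sweeps through every $2$--cell of $\xdkl$ under sufficient iteration'' --- is essentially a restatement of what has to be proved rather than an argument; in DKL this occupies a substantial part of the paper and uses the specific local structure of the folded mapping torus.

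There are also gaps in the softer items. For item (4) you invoke ``the standard fact that connectedness of a circle-valued fiber is controlled by surjectivity of the induced map on $\pi_1$.'' No such fact holds here: $\eta_\al$ is not a fibration (the preimages of regular values are non-homeomorphic graphs), and surjectivity of $(\eta_\al)_*$ alone does not rule out disconnected fibers --- already a non-monotone degree-one map $S^1\to S^1$ has disconnected regular fibers. The correct argument (as in DKL, in the spirit of Stallings' fibering criterion) passes to the infinite cyclic cover associated to $\ker(\al)$ and uses the semiflow, with its positivity relative to the cocycle, to deformation retract that cover onto $\tau_\al$; this is precisely the step you would need to supply. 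Similarly, your justification of item (5) rests on the claim that a forward orbit beginning in $\cdkl^{(1)}$ stays in $\cdkl^{(1)}$, which is false for points of the transversal (diagonal) $1$--cells: these flow into the interiors of $2$--cells, and ``advancing $\eta_\al$ at a positive rate'' does not force the orbit to meet a $0$--cell, so (5) needs an argument tied to how $\tau_\al$ is actually positioned. On the positive side, your derivation of item (7) from Propositions~\ref{AinT}, \ref{dil-prop} and \ref{delta-prop} is legitimate and non-circular within this paper's logic (the proof of Theorem~\ref{main-thm} never uses item (7)), although it differs from DKL, who establish convexity directly; but it of course presupposes items (1)--(6), which is where your proposal is incomplete.
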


\begin{proof}
This is a compilation of results of \cite{DKL}. 
\end{proof}

\subsection{The proof of main theorem}
In this section, we prove Theorem~\ref{main-thm}. A crucial step to our proof is that 
the mapping torus $\sY= (Y_f,\mathfrak C_f,\psi_f)$
and the folded mapping torus $\sX=(X_\f,\mathfrak C_\f,\psi_\f)$ both have the same cycle 
polynomial.  

\begin{proposition} \label{Prop:same-plynoliam} 
The cycle functions $\theta_{\sY}$ of $(Y_f,\mathfrak C_f, \psi_f)$ and $\theta_{\sX}$ of
 $(X_\f,\mathfrak C_\f, \psi_\f)$ coincide. 
\end{proposition}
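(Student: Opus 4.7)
The plan is to realize $\sX$ as the endpoint of a chain of moves applied to $\sY$, each of which preserves the cycle function by the results of Section~\ref{semiflow-sec}. The three available moves are: allowable vertical subdivision (Proposition~\ref{verticalsubdivision-prop}), transversal subdivision (Lemma~\ref{trans-sub}), and folding (Lemma~\ref{folding-preserve-lem}). It therefore suffices to exhibit such a chain joining $(Y_f,\mathfrak{C}_f,\psi_f)$ to $(X_\f,\mathfrak{C}_\f,\psi_\f)$, and the proposition will follow by composing the invariances.

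Using the folding decomposition $\f = (f_1,\dots,f_k;h)$ of $f$, choose heights $0 = t_0 < t_1 < \cdots < t_k < t_{k+1} = 1$ in the mapping torus coordinate $[0,1]$. The idea is that the slab $\tau \times [t_{i-1}, t_i]$ in $Y_f$ will be reorganized so as to match the $i$-th piece $X_i$ of $X_\f$, while the topmost slab $\tau \times [t_k, 1]$ will absorb the homeomorphism $h$. To install the horizontal levels at $t_i$ inside $Y_f$, I would first push the forward orbits of the vertices of $\mathfrak{C}_f$ up through each height $t_j$ by a sequence of allowable vertical subdivisions, and then apply transversal subdivisions that introduce the transverse $1$-cells refining each level $\tau \times \{t_i\}$ into a copy of the intermediate graph $\tau_i$. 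By Proposition~\ref{verticalsubdivision-prop} and Lemma~\ref{trans-sub}, these operations leave the cycle function unchanged.

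After this setup, within the $i$-th slab the fold $f_i\from \tau_{i-1} \to \tau_i$ identifies a pair of edges $e_1, e_2$ of $\tau_{i-1}$ meeting at a common vertex, and in the subdivided branched surface the corresponding pair of $2$-cells share a common vertical edge and a common transverse edge --- precisely the configuration for the folding move of Section~\ref{folding-sec}. Performing the folding in each slab and interpreting the top-to-bottom identification as the homeomorphism $h$ produces a branched surface whose cell structure is combinatorially isomorphic to $\mathfrak{C}_\f$. By Lemma~\ref{folding-preserve-lem}, each fold preserves the cycle function, so we conclude $\theta_\sY = \theta_\sX$.

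The main obstacle is the inductive bookkeeping: one must verify, at every stage, that the vertical subdivisions introduced are allowable (the forward orbits of the chosen subdivision points terminate at vertices of the current $0$-skeleton) and that the pair of $2$-cells to be folded truly share the required vertical and transverse edges. I would handle this by induction on $i = 1,\dots,k$, performing all subdivisions and the $i$-th fold within slab $i$ before moving to slab $i+1$. After the $i$-th step, the cell structure on the processed portion of $Y_f$ agrees with the corresponding initial piece $X_1 \cup \cdots \cup X_i$ of $X_\f$, which furnishes the hypothesis ensuring allowability for the next round of subdivisions.
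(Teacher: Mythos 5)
Your proposal is correct and follows exactly the paper's strategy: the paper likewise observes that $(X_\f,\mathfrak C_\f,\psi_\f)$ is obtained from $(Y_f,\mathfrak C_f,\psi_f)$ by a sequence of allowable vertical subdivisions, transversal subdivisions, and folds, and then invokes Proposition~\ref{verticalsubdivision-prop}, Lemma~\ref{trans-sub}, and Lemma~\ref{folding-preserve-lem}. Your slab-by-slab induction just makes explicit the bookkeeping that the paper leaves as an observation (illustrated by its unfolding figure).
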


\begin{proof} 
We observe that  $(X_\f,\mathfrak C_\f, \psi_\f)$ can be obtained from the
mapping torus of the train-track map $(Y_f,\mathfrak C_f, \psi_f)$ by a sequence of folds, 
vertical subdivisions and transversal subdivision, as defined in Sections \ref{vertical-sec} 
and \ref{folding-sec}.  The reverse of these folds is shown in Figure~\ref{unfold-fig}.
\begin{figure}[htbp] 
   \centering
   \includegraphics[height=1.2in]{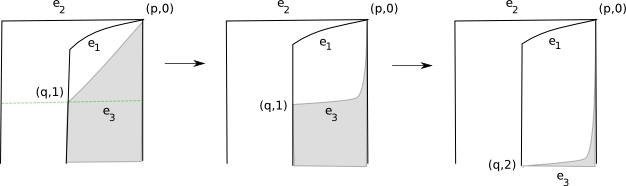} 
   \caption{Vertical unfolding. }
   \label{unfold-fig}
\end{figure}

\noindent
The proposition now follows from  Proposition~\ref{verticalsubdivision-prop}, 
Lemma~\ref{trans-sub} and Lemma~\ref{folding-preserve-lem},
\end{proof}

We also  need to check that our theorems apply for vectors in
the DLK-cone $\calA_\f$. 

\begin{proposition}\label{AinT}
Let $\theta_\f$ be the cycle polynomial of the DKL mapping torus.
Then
\[
\calA_\f\subseteq  \calT_{\theta_\f}(1).
\]
\end{proposition}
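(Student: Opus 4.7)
The plan is to show that for every $\alpha \in \calA_\f$ and every simple cycle $\sigma$ of the dual digraph $D$ of $(X_\f,\mathfrak C_\f,\psi_\f)$, $\alpha(g(\sigma)) > 0$. Since $\Supp(\theta_\f) \setminus \{1\} = \{g(\sigma)^{-1} : \sigma \in \calC_D^G\}$ and $\alpha(1) = 0$, this is exactly the condition $\alpha \in \calT_{\theta_\f}(1)$; additivity of $\alpha$ and of $g$ over disjoint unions of simple cycles then handles non-simple cycles for free.

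I would fix $\alpha \in \calA_\f$ together with a cocycle representative $z \in \alpha$ satisfying $z(e) > 0$ on every 1-cell $e \in \mathfrak C_\f^{(1)}$, each oriented positively with respect to $\psi_\f$. The aim is to produce a cellular 1-cycle $\sigma' = \sum_e n_e\, e$ supported on $\mathfrak C_\f^{(1)}$ which is homologous in $X_\f$ to the embedded image of $\sigma$ and satisfies $n_e \ge 0$ for every $e$ with at least one strict inequality. Given such a $\sigma'$,
\[
\alpha(g(\sigma)) \;=\; z(\sigma') \;=\; \sum_e n_e\, z(e) \;>\; 0.
\]

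The construction of $\sigma'$ relies on two geometric facts. First, each edge of $D$ realizes a hinge whose vertical segments are flow lines and which runs from its initial 2-cell to its terminal 2-cell in the forward direction of $\psi_\f$; hence the embedded image of $\sigma$ in $X_\f$ is a closed loop that moves monotonically forward along the semiflow. Second, by the discussion of $\mathfrak C_\f$ in Section~\ref{foldedmappingtorus-sec}, every 1-cell of $\mathfrak C_\f^{(1)}$ carries an orientation with a positive flow component: vertical cells are tangent to the flow, and diagonal cells are skew but point forward in time. Within each 2-cell $c$ crossed by $\sigma$ I would homotope the arc of $\sigma$, rel its entry and exit points on transversal 1-cells of $\partial c$, to the portion of $\partial c$ that traverses every 1-cell it meets in the positive orientation. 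Assembling these local boundary arcs over all 2-cells visited by $\sigma$ yields the desired cellular 1-cycle $\sigma'$ with nonnegative coefficients.

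The principal obstacle is verifying the homotopy step for each kind of 2-cell occurring in $X_\f$. For a rectangular 2-cell inherited from the mapping torus $Y_f$ the claim is transparent, since its two horizontal transversal sides are joined by two vertical sides, giving a unique forward-monotone boundary arc between any transversal entry and exit points. The subtler case is a triangular 2-cell introduced at a folding site, whose boundary consists of one vertical, one transversal, and one diagonal 1-cell (all carrying orientations that are forward in time); a short combinatorial check is required to confirm that these three 1-cells share forward endpoints in such a way that one of the two boundary arcs between any pair of transversal entry/exit points uses every 1-cell only in its positive orientation. Once that case analysis is in place, the displayed inequality concludes the proof.
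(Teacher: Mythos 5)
Your overall strategy (pair a positive cocycle representative of $\alpha$, which exists by Definition~\ref{Def:DKL-cone}, against a nonnegative cellular $1$--cycle homologous to each dual-digraph cycle) is a genuinely different and in principle attractive route: unlike the paper's proof, which quotes items (2) and (3) of Theorem~\ref{DKL1-thm} (the DKL map $\eta_\al$ and the identity $\ell(\eta_\al(c))=\al([c])$) and hence works a priori only at integral classes, your argument would apply directly to every $\alpha\in\calA_\f$. However, the step that carries all the content --- the local homotopy inside each $2$--cell --- is not established, and your case analysis does not match the complex $(X_\f,\mathfrak C_\f,\psi_\f)$ the proposition is about. There are no ``rectangular cells inherited from $Y_f$'' and no fold triangles in $\mathfrak C_\f$: the horizontal segments $\tau_i\times\{1\}$ are not $1$--cells of $\mathfrak C_\f$, every non-vertical $1$--cell is a diagonal, and a $2$--cell is a trapezoid spanning several stages, with a single top diagonal, a single bottom diagonal, and two chains of vertical $1$--cells as sides (this uses, crucially, that Stallings folds send edges to single edges, so the band flowing down from a top diagonal is capped all at once by one later diagonal). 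Even in $Y_f$, where you call the claim ``transparent'', the bottom of the cell $c_e$ of Definition~\ref{MTcelldecomp-def} is the edge path $t_{f(e)}$, typically many $1$--cells traversed with mixed orientations, so ``one of the two boundary arcs is positive'' is not transparent and can fail outright when reversed edges occur on both sides of the exit point.

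Even granting the trapezoid structure, your literal criterion is problematic: when the top and bottom diagonals of a trapezoid slant the same way, \emph{neither} boundary arc between the entry and exit points traverses all the $1$--cells it meets positively (the partial traversals of the top or bottom diagonal go the wrong way), and if one only requires positivity on complete $1$--cells, then both arcs qualify and the choice is not innocent: the two partial traversals of a crossed diagonal contributed by the two adjacent cells must be matched, and with inconsistent choices they assemble to a full \emph{negative} traversal of that diagonal. What rescues the argument is a coherent global routing rule, for instance first sliding each crossing point to the \emph{bottom} endpoint of its diagonal; one then checks in each trapezoid (four cases according to the slants of the top and bottom diagonals) that the resulting path from bottom endpoint to bottom endpoint runs only down vertical $1$--cells and, possibly, once positively over the bottom diagonal, so the assembled cellular loop is a nonnegative cycle which is nonconstant because it still winds positively around the circle direction. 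These missing ingredients --- the correct description of the $2$--cells of $\mathfrak C_\f$ and a consistent routing at the crossed diagonals --- constitute a real gap; once supplied, your proof goes through and is a nice cocycle-level alternative to the paper's appeal to Theorem~\ref{DKL1-thm}.
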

\begin{proof}
We need to show that, for every $\sig \in \C_{\xdkl}$ with 
$|\sig|=1$, we have $\al(g(\sig))>0$. Then for all nontrivial $g \in \Supp(\theta_\f)$, 
we have $\al(g)>0$, and hence $\alpha \in \calT_{\theta_\f}(1) = \calT$. 
Let $c$ be a closed loop in $D$. The embedding of $D$ in $\xdkl$ described in 
Def.~\ref{dualdigraph-def} induces an orientation on the edges of $D$ that is 
compatible with the flow $\psi$. For each edge $\mu$ of $c$, 
item (2) in Theorem \ref{DKL1-thm} implies 
$\ell(\eta_\al(\mu))>0$ and tem (3)  in Theorem \ref{DKL1-thm} implies
$\al([c]) = \ell(\eta_\al(c)) = \sum_{\mu \in c} \ell(\eta_\al(\mu))>0$.
\end{proof}

\begin{proposition}\label{dil-prop} Let $(\xdkl,\cdkl, \pdkl)$ be the folded mapping torus, $\theta_\f$ its cycle  polynomial and $\calA_\f$ the DKL-cone. For all primitive integral $\alpha \in A_\f$,  we have 
\[ \lambda(\phi_\alpha) = |\theta_\f^{(\alpha)}|\]
\end{proposition}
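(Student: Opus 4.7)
The plan is to reduce this to the mapping torus case treated in Section~\ref{mappingtorus-sec}. Invoking Theorem~\ref{DKL1-thm}, each primitive integral $\alpha \in \calA_\f$ produces an expanding irreducible train-track map $f_\alpha \from \tau_\alpha \to \tau_\alpha$ representing $\phi_\alpha$, where $\tau_\alpha = \eta_\alpha^{-1}(x_0)$ is a connected fiber of $\eta_\alpha \from X_\f \to S^1$ and $f_\alpha$ is the first-return map of $\psi_\f$. By Definition~\ref{freedil-def}, $\lambda(\phi_\alpha) = \lambda(f_\alpha)$.

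First, I would form the mapping torus $(Y_{f_\alpha}, \mathfrak C_{f_\alpha}, \psi_{f_\alpha})$ of $f_\alpha$ as in Section~\ref{mappingtorus-sec}. The results of that section, applied to $f_\alpha$ in place of $f$, yield
\[
\lambda(f_\alpha) = \big|\theta_{Y_{f_\alpha}}^{(\alpha_{\phi_\alpha})}\big|,
\]
where $\alpha_{\phi_\alpha} \in H^1(Y_{f_\alpha};\Z)$ is the projection corresponding to the fibration $Y_{f_\alpha} \to S^1$. The next step is to show that $(X_\f, \mathfrak C_\f, \psi_\f)$ and $(Y_{f_\alpha}, \mathfrak C_{f_\alpha}, \psi_{f_\alpha})$ share a common refinement obtained by a sequence of allowable vertical subdivisions, transversal subdivisions, and foldings — exactly the moves analyzed in Section~\ref{semiflow-sec} and used already in the proof of Proposition~\ref{Prop:same-plynoliam}. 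Items (4)--(6) of Theorem~\ref{DKL1-thm} are the crucial inputs: item (5) says that flow lines emanating from points of $\tau_\alpha \cap \mathfrak C_\f^{(1)}$ eventually hit $0$-cells, so refining $\mathfrak C_\f$ along these forward orbits is an allowable vertical subdivision making $\tau_\alpha$ into a subcomplex; item (6) says that $f_\alpha$ is literally the first-return map of $\psi_\f$, so after this refinement $X_\f$ becomes the mapping torus of $f_\alpha$ with an additional refinement in the other ``vertical'' direction coming from the original cellular structure $\mathfrak C_\f$. Proposition~\ref{verticalsubdivision-prop}, Lemma~\ref{trans-sub}, and Lemma~\ref{folding-preserve-lem} then give $\theta_{X_\f} = \theta_{Y_{f_\alpha}}$ in $\Z G$, where the identification of group rings is natural because both $2$-complexes are homotopy equivalent to $X_\f$.

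Finally, item (1) of Theorem~\ref{DKL1-thm} identifies $\alpha_{\phi_\alpha}$ with $\alpha$ under the isomorphism $\pi_1(Y_{f_\alpha}) \cong \pi_1(X_\f) = \Gamma$, so
\[
\big|\theta_\f^{(\alpha)}\big| \;=\; \big|\theta_{X_\f}^{(\alpha)}\big| \;=\; \big|\theta_{Y_{f_\alpha}}^{(\alpha_{\phi_\alpha})}\big| \;=\; \lambda(f_\alpha) \;=\; \lambda(\phi_\alpha).
\]
The main obstacle is the intermediate step: verifying that $(X_\f,\mathfrak C_\f,\psi_\f)$ can be put in the form of an iteration of allowable moves applied to $(Y_{f_\alpha},\mathfrak C_{f_\alpha},\psi_{f_\alpha})$. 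Item (3) of Theorem~\ref{DKL1-thm}, which asserts $\ell(\eta_\alpha(c)) = \alpha([c])$ for every simple positively oriented cycle $c$, is what guarantees that the $G$-labels on the dual digraphs match up correctly under these refinements — so no hidden rescaling enters the identification of cycle polynomials.
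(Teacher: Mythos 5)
Your proposal is correct and is essentially the paper's own argument: both hinge on modifying $(\xdkl,\cdkl,\pdkl)$ by allowable vertical and transversal subdivisions and (un)foldings so that $\tau_\al$ carries the branching (justified by items (4)--(6) of Theorem~\ref{DKL1-thm}), invoking the invariance results of Section~\ref{semiflow-sec}, and using item (3) to match lengths with $\al$. The only difference is cosmetic: you finish by identifying the result with the mapping torus $Y_{f_\al}$ and citing the proposition of Section~\ref{mappingtorus-sec}, while the paper compares the dual digraph of the modified complex directly with $D_\al$ and applies the coefficient theorem (Theorem~\ref{coeff-thm}), which is the same computation in substance.
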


\begin{proof}
Embed $\tau_\al$ in $\xdkl$ transversally as in Theorem \ref{DKL1-thm}(\ref{embedGraph}), and perform a vertical subdivision so that the intersections of $\tau_\al$ with $(\xdkl)^{(1)}$ are contained in the $0$--skeleton (we can do this by Theorem \ref{DKL1-thm}(5)). 
Perform transversal subdivisions to add the edges of $\tau_\al$ to the 1-skeleton. Then perform a sequence 
of foldings and unfoldings to move the branching of the complex into $\tau_\al$, and remove the extra 
edges. Denote the new branched surface
 by $X_\f^{(\al)} = (\xdkl,\cdkl^{(\alpha)},\pdkl)$. These operations preserve the cycle polynomials of the respective 2-complexes, therefore we denote all of these polynomials by $\theta$ (in particular $\theta_\f = \theta$).

Let $f_\al \from \tau_\alpha \rightarrow \tau_\alpha$ 
 be the map induced by the first return map, 
and $D_\al$ its digraph. Then $f_\alpha$ defines a train-track map representing $\phi_\alpha$,
and $\lambda(\phi_\alpha) = \lambda(D_\alpha)$.

The (unlabeled) digraph $D_\f^{(\alpha)}$ of the new branched surface $(\xdkl,\cdkl^{(\alpha)},\pdkl)$ is identical to $D_\al$.
For every cycle $c$ in $D_\al$,  
\[ \begin{array}{ll}
\ell(c) & = \# \text{edges in }c = \# \tau_\al \cap (\xdkl^{(\al)})^{(1)} \\[0.1 cm]
 &= \ell(\eta_\al(c))=\al([c]).
\end{array}\]
The equalities in the second line follow from Theorem \ref{DKL1-thm}(\ref{embedGraph}) 
and (\ref{lenVSalpha}) respectively.  
Thus $\ell(\sig) = \al(g(\sig))$ for every $\sig \in C_{D_\al}$. Let $P_\al(x)$ be the characteristic polynomial of the incidence matrix associated to $D_\al$. By the coefficients theorem for digraphs  (Theorem~\ref{coeff-thm}) we have:
\begin{align*}
P_\al(x) &= x^m + \sum_{\sig \in C_D} (-1)^{|\sig|}x^{m-\ell(\sig) } \\
&= x^m\left(1 +\sum_{\sig \in C_{D_\al}} (-1)^{|\sig|}x^{\al(g(\sig))} \right) \\
&= x^m \theta^{(\al)}
\end{align*} 
Therefore,
\[ 
\lambda(\phi_\al) = |P_\al| = |\theta^{(\al)}|  \qedhere
\]
\end{proof}

We are now ready to prove our main result.

\begin{proof}[Proof of Theorem~\ref{main-thm}]
Choose an expanding train-track representative $f$ of $\phi$, and a folding decomposition $\f$ of $f$. 
As before, let $\sY = (Y_f, \mathfrak C_f,\psi_f)$  be the mapping torus of $f$, and
$\sX = (X_{\f}, \mathfrak C_{\f},\psi_\f)$  the folded mapping torus. 
By \propref{same-plynoliam} their cycle function $\theta_{\sY}, \theta_{\sX}$ are equal,
and we will call them $\theta$.

Let $\Theta$ be the minimal factor of $\theta$ defined in Proposition \ref{delta-prop}, 
and let $\calT= \calT_{\Theta}(1)$ be the McMullen cone.
By Proposition~\ref{AinT}, $A_\f \subseteq \calT$, and by Proposition 
\ref{dil-prop}, $\lambda(\phi_\al) = |\Theta^{(\al)}|$. By Proposition \ref{delta-prop}, 
$|\Theta_\phi^{(\al)}| = |\Theta^{(\al)}|$ in $\calT$ so we have
$\lambda(\al) = |\Theta_\phi^{(\al)}|$ for all $\al \in A_\f$.
Item (2) of Proposition \ref{delta-prop}  implies part (2) of Theorem~\ref{main-thm}.
If $\f'$ is another folding decomposition of another expanding irreducible 
train-track representative $f'$ of 
$\phi$, we get another distinguished factor $\Theta_{f'}$.   Since the cones $\calT_\f$ 
and $\calT_{\f'}$ must intersect, it follows by the minimality properties of $\Theta_\f$ and
$\Theta_{f'}$ in Proposition~\ref{delta-prop} that they are equal.
Item (3) of Proposition \ref{delta-prop} completes the proof. 
\end{proof}

\section{Example}\label{example-sec}
In this section, we  compute the cycle polynomial for an explicit example, and
compare the DKL and McMullen cones.

Consider the rose with four directed edges $a,b,c,d$ and the map:
\[ f= \left\{
\begin{array}{lll}
a \to & B \to & adb \\
c \to & D \to & cbd.
\end{array} \right.
\] 
Capital letters indicate the relavent edge in the opposite orientation 
to the chosen one.
 It is well known (e.g. Proposition 2.6 in  \cite{AlgomKfirRafi:small}) that if $f\from\tau \to \tau$ is 
a graph map, and $\tau$ is a graph with $2m$ directed edges, and for every edge $e$ 
of $\tau$, the path $f^{2m}(e)$ does not have back-tracking (see Definition~\ref{graph-def}),  then $f$ is a train-track 
map. One can verify that $f$ is a train-track map.
\begin{figure}[htbp] 
   \centering
   \includegraphics[width=1.2in]{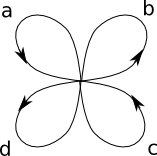} 
   \caption{Four petal rose with directed edges.}
   \label{Rose4directed-fig}
\end{figure}

The train-track transition matrix is given by
$$
M_f = 
\left [
\begin{array}{cccc}
0 & 1 & 0 & 0\\
1 & 1 & 0 & 1\\
0 & 0 & 0 & 1\\
0 & 1 & 1 & 1
\end{array}
\right ].
$$
The associated digraph is shown in figure~\ref{Ydigraph-fig}.
\begin{figure}[htbp] 
   \centering
   \includegraphics[width=1.5in]{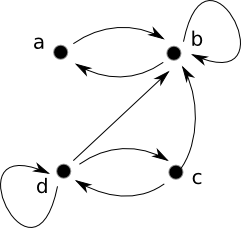} 
   \caption{Digraph associated to the train-track map $f$.}
   \label{Ydigraph-fig}
\end{figure}

The matrix $M_f$ is non-negative and $M_f^3$ is positive.  Thus $M_f$ is a Perron-Frobenius matrix
and $f$ is  a PF train-track map, hence an expanding irreducible train-track map
(see Section~\ref{freegroupautomorphisms-sec} for definitions).
By Theorem~\ref{DKL-thm}, $\alpha_\phi$ has an open cone neighborhood,
the DKL cone
$\calA_\f \subset \Hom(\Gamma;\R)$ such that the primitive integral elements of $\calA_\f$ correspond to
free group automorphisms that can be represented by expanding irreducible train-track maps.

\begin{remark}
The outer automorphism $\phi$ represented by $f$ is reducible. Consider the free factor   $\langle bA, ad, ac \rangle$ then 
\[ f(bA) = BDAb, \quad f(ad) = BDBC = aBDAaBCA, \quad f(ac) = BD = bAad. \]
Therefore this factor is invariant up to conjugacy. 
Thus $\phi$ is reducible, but  $f$ is a PF train-track map, and hence it is expanding and irreducible.  Thus we can
apply both Theorem~\ref{DKL1-thm} and Theorem~\ref{main-thm}.
\end{remark}

Identifying the fundamental group of the rose with $F_4$ we choose the basis $a,b,c,d$ of $F_4$. The free-by-cyclic group corresponding to $[f_*]$ has the presentation:  
\[ \Gamma = \langle a,b,c,d,s' \mid a^{s'} = B, b^{s'} = BDA, c^{s'} = D, d^{s'} = DBC \rangle. \]
Let  $G = \Gamma ^{\text{ab}}$ and for $w \in \Gamma$ we denote by $[w]$ its image in $G$. Then 
\[ [a] = -[b] = [d] = -[c].\]
Thus $G = \ZZ^2 = \langle t,s \rangle$ where $t=[a]$ and $s = [s']$. We decompose $f$ into four folds
\[ \tau=\tau_0 \xrightarrow{f_1} \tau_1 
\xrightarrow{f_2} \tau_2
\xrightarrow{f_3} \tau_3
\xrightarrow{f_4} \tau_4 \cong \tau,\] 
where all the graphs $\tau_i$ are roses with 
four petals. $f_1$ folds all of $a$ with the first third of $b$, to the edge $a_1$ of 
$\tau_1$, the other edges will be denoted $b_1$, $c_1$, $d_1$. $f_1$ folds the edge 
$c_1$ with the first third of the edge $d_1$. With the same notation scheme, $f_2$ folds 
the edge $c_2$ with half of the edge $b_2$ and $f_3$ folds the edge $a_3$ with half 
of the edge $d_3$. Figure \ref{foldDiagramX} shows the folded mapping torus 
$X_\f$ for this folding sequence. 

\begin{figure}[hbt]
\begin{center}
\includegraphics[width=2.5in] {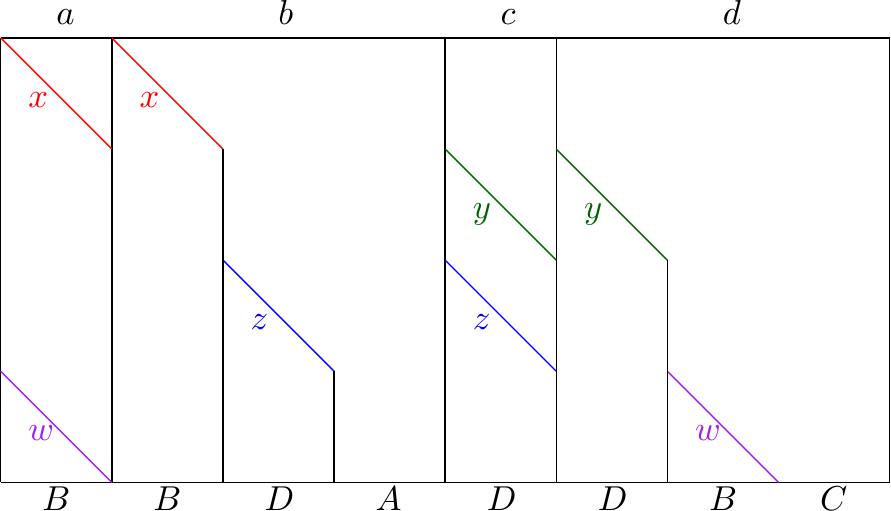}
\caption{\label{foldDiagramX}The complex $X$}
\end{center}
\end{figure}

The cell structure $\mathfrak{C}_\f$ 
 has 4 vertices, 8 edges: $s_1, s_2, s_3, s_4, x,y,z,w$, 
and four 2-cells: $c_x, c_y, c_z, c_w$. 
The 2-cells are sketched in Figure \ref{discs}. 

\begin{figure}[htb]
\begin{center}
\includegraphics[width=3.5in]{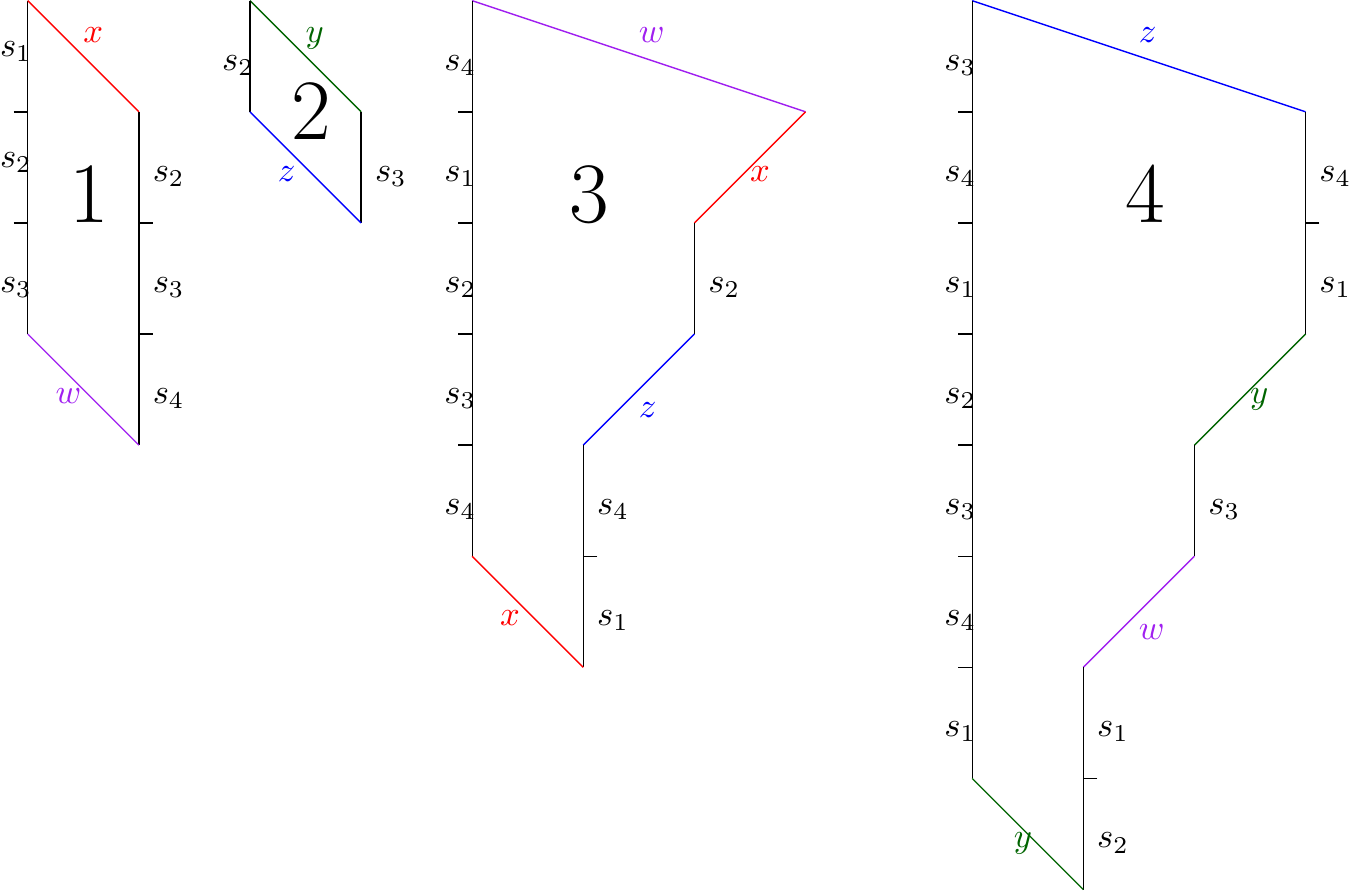}
\caption{\label{discs}The discs in $X$}
\end{center}
\end{figure}

Let $C_1$ be the free abelian group generated by the edges of $X_\f$, and 
let $F$ be the maximal tree consisting of the edges $s_1, s_2, s_3$, then  
$Z_1 \subset  C_1$ is generated by $x,y,z,w$ and $s_1+s_2+s_3+s_4$. 
The quotient homomorphism $\nu \from Z_1 \to G$ is given by collapsing the maximal tree and considering the relations given by the two cells. 
The map is given by $\nu(s_1 + s_2 + s_3 + s_4) = s$ and 
\[ 
\nu(x)=t \quad \nu(y) =  \nu(z) = -t \quad \nu(w) = t+ s. \]

\begin{figure}[ht]
\begin{center}
\includegraphics[width=4in]{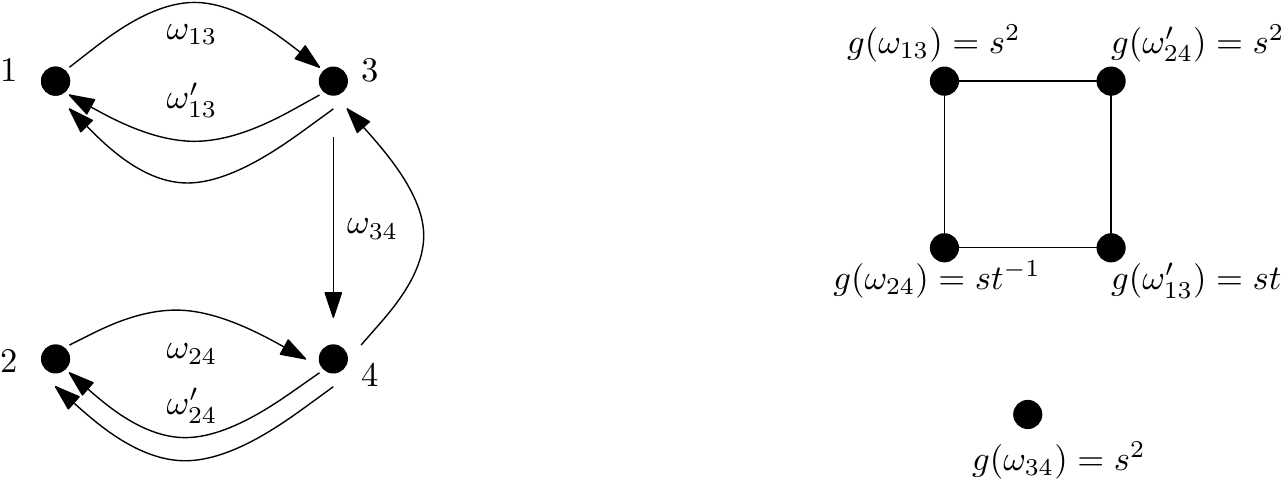}
\caption{\label{DigraphAndCycleComp}The vertical diagraph is on the left and the labeled cycle complex on the right.}
\end{center}
\end{figure}

The dual digraph $D$ to $X$ is shown on the left of Figure~\ref{DigraphAndCycleComp}. There are five cycles: $\omega_{13}$ and $\omega_{13}'$ the two 
distinct cycles containing $1$ and $3$, $\omega_{24}$ and $\omega_{24}'$ the two distinct 
cycles containing $2$ and $4$, and $\omega_{34}$ is the cycle containing $3$ and $4$. 
The cycle complex is  shown on the right of Figure \ref{DigraphAndCycleComp}.

\[ \begin{array}{ll}
\theta_f & = 1 - (s^{-2} + s^{-1} t^{-1} + s^{-2} + s^{-1} t + s^{-2} ) + ( s^{-3} t + s^{-2} + s^{-3} t^{-1} + s^{-4}) \\[0.2 cm]
& = 1 + s^{-4} -2s^{-2} - s^{-1}t^{-1} - s^{-1}t + s^{-3}t + s^{-3}t^{-1}
\end{array}
\]
Note that $\Theta_\phi$ might be a proper factor of this polynomial. However, for the
sake of computing the support cone (and the dilatations of $\phi_\al$ for different $\al \in \calA_\f$) we may use $\theta_f$. 

\noindent
\textbf{Computing the McMullen cone:} In order to simplify notation, for 
$\al \in \Hom(G,\RR)$ and $g \in G$ we denote $g^\al = \al(g)$. 
The cone $\calT_\phi$ in $H^1(G,\RR)$ is given by
\[ \begin{array}{ll}
\calT_\phi & = \{ \al \in \Hom(G,\RR) \mid  g^\al < 0^{\al} \text{ for all }g \in \text{Supp}(\theta_f) \} \\[0.2 cm]
 & = \left\{ \al \in \Hom(G,\RR) \left|  
\begin{array}{l}
(-4s)^\al, (-2s)^\al, (-s-t)^\al < 0 \\
(-s+t)^\al, (-3s+t)^\al, (-3s-t)^{\al}<0 
\end{array}
\right\}. \right.
\end{array}.
\]
Therefore, the McMullen cone is 
\begin{equation}\label{eqConeT} \calT_\phi = \{  \al \in \Hom(G,\RR) \mid s^\al>0 \quad \text{and} \quad |t^\al|< s^\al \}. \end{equation}
\begin{figure}[hb]
\begin{center}
\includegraphics[width=2in]{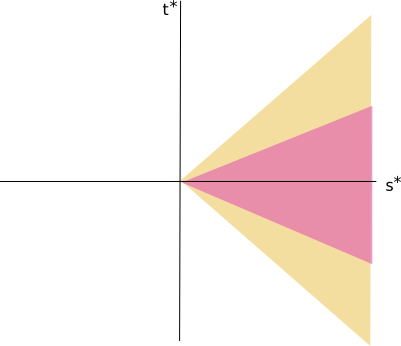}
\caption{\label{cones-fig}The McMullen cone $\calT$ (outer)  and DKL cone $\calA_\f$ (inner).}
\end{center}
\end{figure}
\noindent
\textbf{Computing the DKL cone:}
We now compute the DKL cone $\calA_\f$. A cocycle $\sf a$ represents an element  in $\al \in \calA_\f$ if it evaluates positively on all edges in $X_\f$. We use the notation: ${\sf a}(e) = e^{\sf a}$. Thus for ${\sf a}$ a positive cocycle: we have \[s_1^{\sf a}, s_2^{\sf a}, s_3^{\sf a} > 0\] and 
\[ s_4^{\sf a} > 0 \implies s^{\sf a}-s_1^{\sf a}+s_2^{\sf a}+s_3^{\sf a}>0 \implies s^{\sf a}>s_1^{\sf a}+s_2^{\sf a}+s_3^{\sf a}>0.\]
Now by considering the cell structure given by all edges in Figure~\ref{cones-fig} and recalling that $[a]=[d]=t$ and $[b]=[c]=-t$ we have: 
\[x = t+s_1 \quad w = t + s_4 \quad y = s_2 - t \quad z = s_3 -t. \]
The diagonal edges $x,w$ give us: \[ 0<x^{\sf a}=t^{\sf a}+ s_1^{\sf a} \text{ and } 0<w^{\sf a}=t^{\sf a}+s_4^{\sf a},\] 
so 
\[t^\al> - \frac{s_1^{\sf a} + s_4^{\sf a}}{2}> -\frac{s^\al}{2}.\]
The other diagonal edges give us
\[ 0<z^{\sf a} = s_3^{\sf a} - t^{\sf a} \text{ and } 0 < y^{\sf a} = s_2^{\sf a} - t^{\sf a}, \] hence 
\[ t^{\al} < \frac{s_2^{\sf a} + s_3^{\sf a} }{2} < \frac{s^\al}{2}. \]
We obtain the cone:
\begin{equation}\label{eqConeA} \left\{ s^\al>0 ~ \text{ and } ~ |t^\al |< \frac{s^\al}{2} \right\}.
 \end{equation}
It is not hard to see that if $\al$ is in this cone we may find a positive cocycle ${\sf a}$ representing $\al$. Therefore $\calA_\f$ is equal to the cone in (\ref{eqConeA}) and is strictly contained in the cone $\calT_\phi$ (see  (\ref{eqConeT}) and Figure~\ref{cones-fig}).

\end{document}